\theoremstyle{plain}
\newtheorem{theorem}{Theorem}[section]
\newtheorem*{theorem*}{Theorem}
\newtheorem{lemma}[theorem]{Lemma}
\newtheorem{corollary}[theorem]{Corollary}
\newtheorem*{corollary*}{Corollary}
\newtheorem{proposition}[theorem]{Proposition}
\newtheorem{conjecture}[theorem]{Conjecture}
\newtheorem{question}[theorem]{Question}
\theoremstyle{definition}
\newtheorem{example}[theorem]{Example}
\newtheorem{definition}[theorem]{Definition}
\theoremstyle{remark}
\newtheorem{remark}[theorem]{Remark}
\numberwithin{equation}{section}
 \renewcommand\SS{{\mathbb{S}}}
 \newcommand\ZZ{{\mathbb{Z}}}
 \def\A{\mathcal{A}}
 \def\C{{\mathcal C}}
 \def\D{{\mathcal D}}
 \def\H{{\mathcal H}}
 \def\K{{\mathcal K}}
 \def\L{{\mathcal L}}
 \def\P{{\mathcal P}}
 \def\R{{\mathcal R}}
 \def\S{{\mathcal S}}
 \def\X{{\mathcal X}}
 \def\Lk{{\text{Lk}}}
 \def\St{\text{St}}  
 \def\op{\mathrm{op}}
 \newcommand{\GF}[1]{\mathbb{F}_{#1}}
 \DeclareMathOperator\GL{GL}
 \DeclareMathOperator\Aut{Aut}
 \DeclareMathOperator\id{id}
 \def\Im{\mathrm{Im}}
 \DeclareMathOperator{\codim}{codim}
 \newcommand{\tq}{\mathrel{{\ensuremath{\: : \: }}}}
 \DeclareMathOperator\Stab{Stab}
 \DeclareMathOperator\Ind{Ind}
 \def\groupiso{\cong}
 \newcommand\gen[1]{\left\langle#1\right\rangle}
\renewcommand{\gen}[1]{\langle #1 \rangle}
\def\GG{G}
\def\PD{{\mathcal{PD}}}
\def\OPD{{\mathcal{OPD}}}
\def\OD{{\mathcal{OD}}}
\def\CB{{\rm CB}}
\def\Opp{{\rm Opp}}
\newcommand\OKtwo[1]{{\OPD(\Delta(#1))}}
\newcommand\Ktwo[1]{{\PD(\Delta(#1))}}
\def\Ch{{\rm Ch}}
\def\Conv{{\rm Conv}}
\renewcommand{\S}{\mathcal{S}}
\renewcommand{\K}{\mathcal{K}}
\newcommand{\redH}{\widetilde{H}}
\newcommand{\redm}[1]{{#1}^{\circ}}
\newcommand{\red}[1]{{#1}^{*}}
\newcommand{\proj}{\mathrm{proj}}
\newcommand{\cham}{\mathrm{Cham}\,}
\newcommand{\roots}{\mathrm{Root}\,}
\newcommand{\dcat}{{d_{\circ}}}
\newcommand{\sd}{\mathrm{sd}}
\def\SS{{\mathbb{S}}}
\newcommand{\ojoin}{\mathbin{\mathpalette\make@circled\star}}
\newcommand{\make@circled}[2]{%
\ooalign{$\m@th#1\smallbigcirc{#1}$\cr\hidewidth$\m@th#1#2$\hidewidth\cr}%
}
\newcommand{\smallbigcirc}[1]{%
  \vcenter{\hbox{\scalebox{0.9}{$\m@th#1\bigcirc$}}}%
}
\author{Kevin I. Piterman}
\address{Vrije Universiteit Brussel\\
Department of Mathematics \& Data Science\\
1050 Brussels, Belgium}
\email{kevin.piterman@vub.be}
\author{John Shareshian}
\address{Washington University in St Louis\\
Department of Mathematics\\
St. Louis, MO 63130, USA
}
\email{jshareshian@wustl.edu}
\author{Volkmar Welker}
\address{Philipps-Universit\"at Marburg \\
Fachbereich Mathematik und Informatik \\
35032 Marburg, Germany}
\email{welker@mathematik.uni-marburg.de}
\begin{document}

\title{Posets of decompositions in spherical buildings}

\begin{abstract}
We propose definitions of the common bases complex, the poset of decompositions, and the poset of partial decompositions for arbitrary spherical buildings.
We show that the poset of decompositions is Cohen-Macaulay, and that the poset of partial decompositions is spherical and homotopy equivalent to the common bases complex.
To prove these results, we rely on the concepts of opposition, Levi spheres, and convexity in buildings.

In particular, our results extend the already known constructions for the linear case (vector spaces) to arbitrary buildings.
As a byproduct, we see that the poset of ordered partial decompositions carries the square of the Steinberg representation.
\end{abstract}

\maketitle

\tableofcontents

\section{Introduction}
\label{sec:intro}

The order complex of the lattice of non-trivial subspaces of a finite-dimensional vector space is well known to be the spherical building of type A (see e.g., \cite{Tits1}).
Recently, simplicial complexes defined 
on the set of non-trivial subspaces 
via constraints on common bases (see e.g., \cite{Rognes,MPW}) or the relative position of the subspaces were studied (see e.g., \cite{LR,BPW24}). Such complexes are often order complexes of partially ordered sets.
The goals of this paper are to provide definitions of such
simplicial complexes and posets that are independent of Lie type, and to provide results on the associated homology groups
and homotopy types. In some cases, the complexes we construct in type A are the same as the previously studied ones.  In other cases, our type A complexes are different than those previously studied but have the same equivariant homotopy type. We start by defining the motivating posets and simplicial complexes and reviewing the relevant literature. 

Let $V$ be an $n$-dimensional vector space over a field $k$.
A partial decomposition of $V$ is a set $\{V_1,\ldots,V_r\}$ of non-zero and proper subspaces of $V$ such that $\gen{V_1,\ldots, V_r} \cong V_1\oplus\cdots\oplus V_r$;
that is, the $V_1,\ldots, V_r$ are in internal direct sum.
The set $\PD(V)$ of all non-empty partial decompositions of $V$ is a poset with order given by refinement; i.e., $\{V_1,\ldots, V_r\}\leq \{W_1,\ldots, W_s\}$ if for all $1\leq i \leq r$ there is $1\leq j\leq s$ such that $V_i\leq W_j$.

In \cite{HHS}, P. Hanlon, P. Hersh and J. Shareshian proved that $\PD(V)$ is Cohen-Macaulay of dimension $2n-3$ if $k$ is a finite field.
In particular, $\PD(V)$ is spherical (see Section \ref{sec:preliminaries} for definitions).
More recently, in \cite{BPW24}, it was proved that for any field $k$,\,$\PD(V)$ has the homotopy type of the common bases complex $\CB(V)$ of $V$, as defined by J. Rognes \cite{Rognes}.
Concretely, $\CB(V)$ is the simplicial complex whose simplices are sets $\sigma$ of non-zero proper subspaces of $V$ for which there exists a basis $B$ of $V$ that each subspace in $\sigma$ is generated by some subset of $B$.
In this case, we say that the elements of $\sigma$ have a common basis.
Rognes showed that, although $\CB(V)$ has dimension $2^n-3$, its homology is concentrated in degrees $\leq 2n-3$, and he conjectured that $\CB(V)$ must indeed be $(2n-4)$-connected.
In \cite{MPW}, J. Miller, P. Patzt and J. Wilson established Rognes' conjecture by showing that there is a continuous map from a $(2n-4)$-connected simplicial complex to $\CB(V)$ that induces an isomorphism on homotopy groups up to degree $2n-4$.
In particular, via \cite{BPW24} this implies that $\PD(V)$ is spherical for any field.

Considering the ordered version $\OPD(V)$ of $\PD(V)$ 
appears to be equally natural.
By definition, $\OPD(V)$ is the poset whose elements are tuples 
$(V_1,\ldots,V_r)$, $r\geq 1$, of distinct subspaces of $V$ such that $\{V_1,\ldots,V_r\} \in \PD(V)$.
The ordering in $\OPD(V)$ is given by refinement consistent with the ordering of the tuples (see Definition \ref{def:ODandOPDVectorSpaces}).
We call the elements of $\OPD(V)$ ordered partial decompositions.
It follows from \cite[Theorem 6.8]{PW25} that $\OPD(V)$ is homotopy equivalent to a wedge of spheres of dimension $2n-3$.
If $V$ is a finite vector space then \cite[Theorem 6.10]{PW25} implies 
that $\OPD(V)$ has the homotopy type of the two-fold join $\Delta * \Delta$ of the Tits building $\Delta$ of $V$.
Using GAP we also verified in some small examples that the top homology group of
$\OPD(V)$, as a $\GL(V)$-module, is the tensor square of the Steinberg module.

Later, we found an alternative approach to determining the 
homotopy type of $\OPD(V)$ using closely related constructions, which inspired the work in this paper.
Let $\OD(V)$ denote the subposet of $\OPD(V)$ consisting of full ordered decompositions $(V_1,\ldots, V_r)$; i.e., those decompositions satisfying
$V \cong V_1\oplus \cdots \oplus V_r$.
Let $X = \Delta \cup \OD(V)$ be the poset on the disjoint union of 
$\Delta$ (regarded as a poset without the empty simplex) and $\OD(V)$ with the following
ordering.
We keep the inclusion ordering among elements of $\Delta$, and the refinement ordering in $\OD(V)$.  No element of $\OD(V)$ lies below any element of $\Delta$. If $\sigma\in \Delta$ and $d\in \OD(V)$, we set $\sigma \prec d$ if and only if there is a basis of $V$ containing a basis for $d$ and a basis for every subspace appearing in $\sigma$.
In Theorem \ref{thm:PDandPDBuildingVectorSpace} we show that
$\OPD(V) \simeq X$.
The poset $X$ in turn is homotopy equivalent to the complex $T^{1,1}(V)$ used in \cite{MPW}, which is shown therein to have
the homotopy type of $\Delta * \Delta$.
This again allows us to conclude that $\OPD(V) \simeq \Delta * \Delta$ for any finite-dimensional vector space $V$.
Since all homotopy equivalences can be seen to be $\GL(V)$-equivariant, it now follows that the top homology group of
$\OPD(V)$ carries the tensor square of the Steinberg representation. 

The poset $\OD(V)$ already appears in work {\cite{LR} of  G.I. Lehrer and L.J. Rylands, who observe that 
$\OD(V)$ can be identified with poset of pairs of opposite parabolics of $\GL(V)$.
\footnote{This observation is motivated by previous work by R. Charney \cite{Charney}, and $\OD(V)$ is commonly known as the Charney complex. See also \cite[Proposition 4.15]{PW25}.}
Recall that two parabolic subgroups are called opposite if their intersection is a Levi complement in both of them.
Indeed, two parabolic subgroups are opposite if and only if they are opposite as simplices of the building $\Delta$ of $V$, which is a notion intrinsically defined for every building (see Subsection \ref{sub:opposition}).
Thus, $\OD(V)$ can be described in terms of simplices of $\Delta$, and we define
\[\, \OD(\Delta) = \big\{\,(\sigma,\sigma') \tq \sigma,\sigma'\in \Delta \text{ are (non-empty) opposite simplices}\,\big\},\]
where the ordering of this poset is inclusion-reversing in each coordinate (so a pair of opposite maximal simplices is an ordered frame of $V$, which is a minimal element of the poset).
Using this identification the ordering between crossed terms in $X = \Delta \cup \OD(\Delta)$ becomes $\sigma\prec (\sigma_1,\sigma_2)$ if there is a basis of $V$ spanning the subspaces from $\sigma\cup \sigma_1\cup \sigma_2$.
This condition is equivalent to  saying that $\sigma,\sigma_1,\sigma_2$ lie in a common apartment of $\Delta$. This shows
that
the poset $\Delta\cup \OD(\Delta)$ can be defined in terms of intrinsic combinatorial properties of the building.

These observations suggest that there may be a more general phenomenon in the context of algebraic groups or even spherical buildings.
In this paper, we study the following questions and answer all of them positively.
Let $\Delta$ be a (spherical) building.
\begin{enumerate}
    \item Can we define a poset $\OPD(\Delta)$ that, up to homotopy, coincides with $\OPD(V)$ when $\Delta$ is the building of the vector space $V$?
    \item Can we similarly define $\PD(\Delta)$? Is it highly connected?
    \item Do we have a common basis complex $\CB(\Delta)$ for $\Delta$ that coincides with $\CB(V)$ when $\Delta$ is the building of the vector space $V$?
    \item If questions (2) and (3) have a positive answer, do we have $\CB(\Delta) \simeq \PD(\Delta)$?
    \item Is $\OPD(\Delta)\simeq \Delta * \Delta$?
\end{enumerate}

While the definitions of $\OPD(\Delta)$ and $\CB(\Delta)$ arise naturally, a suitable definition of $\PD(\Delta)$ is less obvious.
Indeed, as in the case of $\OPD$, Theorem \ref{thm:PDandPDBuildingVectorSpace} shows that $\PD(V)\simeq \Delta \cup \D(V)$, where $\D(V)$ is the poset of full decompositions of $V$, and the ordering between crossed terms in $\Delta \cup \D(V)$ is defined as in the ordered case: if $\sigma\in \Delta$ and $d\in \D(V)$, then we set $\sigma\prec d$ if $\sigma\cup d\in \CB(V)$ ($\sigma$ and $d$ have a common basis).
Our next observation is, that $\D(V)$ can be identified with the poset of (split) Levi subgroups of $\GL(V)$.
In the language of buildings, split Levi subgroups correspond to Levi spheres, as introduced by J.P. Serre in \cite{Serre} (see Subsection \ref{sub:convexity} and Section \ref{sec:leviSpheres}).
If we identify a full decomposition $d\in \D(V)$ with a Levi sphere, then the condition $\sigma \prec d$ becomes ``$\sigma$ and the Levi sphere $d$ lie in a common apartment".
Therefore, for a spherical building $\Delta$, we propose:

\begin{itemize}
    \item $\CB(\Delta) = \{\sigma \tq \sigma$ is a subset of the vertex set of some apartment of $\Delta\}$ (Definition \ref{def:commonBasis}) 
    \item $\D(\Delta) = $ poset of non-empty Levi spheres ordered by reverse inclusion (Definition \ref{def:decompBuilding}).
    \item $\PD(\Delta) = \Delta\cup \D(\Delta)$, with crossed-term ordering $\sigma\prec S$ if there is an apartment containing the Levi sphere $S$ and the simplex $\sigma$ (Definition \ref{def:PDbuildings}).
    \item $\OD(\Delta) = \{(\sigma_1,\sigma_2)\tq \sigma_1,\sigma_2$ are non-empty opposite simplices of $\Delta\}$, with coordinate-wise reverse-inclusion ordering (Definition \ref{def:ODandOPDbuildings}).
    \item $\OPD(\Delta) = \Delta \cup \OD(\Delta)$, with crossed-term ordering $\sigma\prec (\sigma_1,\sigma_2)$ if there is an apartment containing $\sigma,\sigma_1,\sigma_2$ (Definition \ref{def:ODandOPDbuildings}).
\end{itemize}

The main theorems of the paper are the following:

\begin{theorem}
Let $\Delta$ be a spherical building.
There are order-preserving maps
\[ \Gamma: \sd \PD(\Delta) \to \CB(\Delta),\]
\[ \phi :\OPD(\Delta) \to \Delta * \Delta,\]
such that for every group $H$ acting on $\Delta$ by simplicial automorphisms, $\Gamma$ and $\phi$ are $H$-equivariant and induce homotopy equivalences between the fixed point subposets:
\[ \Gamma_H : \PD(\Delta)^H \to \CB(\Delta)^H,\]
\[ \phi_H :\OPD(\Delta)^H \to \Delta^H * \Delta^H.\]
In particular, $\Gamma$ and $\phi$ induce homotopy equivalences and equivariant isomorphisms in (co)homology.
\end{theorem}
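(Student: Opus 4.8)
The plan is to take the explicit maps $\Gamma$ and $\phi$ constructed in the sections to come and to deduce the statement from an equivariant form of Quillen's fiber theorem. Observe first that all of $\PD(\Delta)=\Delta\cup\D(\Delta)$, $\OPD(\Delta)=\Delta\cup\OD(\Delta)$, the complex $\CB(\Delta)$, and the ordinal sum $\Delta*\Delta$ are defined using only the incidence data of the building --- its simplices, its system of apartments, the opposition relation, and its Levi spheres (equivalently, its convex subcomplexes of Levi type) --- and $\Gamma$, $\phi$ are built from the same data by canonical operations (convex hulls, projections onto residues, passage to links). Hence any simplicial automorphism of $\Delta$ induces compatible automorphisms of source and target commuting with $\Gamma$ and with $\phi$; in particular, for any $H$ acting on $\Delta$ the two maps are $H$-equivariant and restrict to the fixed-point subposets, using that $(\Delta*\Delta)^H=\Delta^H*\Delta^H$ because $H$ acts diagonally and preserves the two factors. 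Order-preservation of $\Gamma$ and $\phi$ is a direct check against the definitions of the orders on $\PD(\Delta)$ and $\OPD(\Delta)$. The tool we then invoke is the equivariant fiber lemma: if $f\colon P\to Q$ is an order-preserving $G$-map of $G$-posets such that for every $q\in Q$ the fiber $f^{-1}(Q_{\le q})$, or dually $f^{-1}(Q_{\ge q})$, admits a $\Stab_G(q)$-equivariant conical contraction --- for instance has a $\Stab_G(q)$-fixed greatest or least element, or retracts onto one via a $\Stab_G(q)$-equivariant closure operator --- then $f^H\colon P^H\to Q^H$ is a homotopy equivalence for every $H\le G$ (both sides possibly empty), because a conical contraction restricts to fixed-point subposets and $q\in Q^H$ forces $H\le\Stab_G(q)$. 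Thus it suffices to prove that each fiber of $\Gamma$ and of $\phi$ over an element of the target is canonically contractible; the equivariant isomorphisms in (co)homology then follow formally by equivariant Whitehead.

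For $\phi$ one identifies $\phi^{-1}(\le\tau)$, for $\tau$ an element of the poset $\Delta*\Delta$, explicitly inside $\OPD(\Delta)=\Delta\cup\OD(\Delta)$ as the subposet of those simplices of $\Delta$ and those pairs of opposite simplices lying below $\tau$ in the sense prescribed by the definition of $\phi$. When $\tau$ lies in the first copy of $\Delta$ this fiber is the face poset of a single simplex of $\Delta$ and has greatest element $\tau$ itself. The substance is the case where $\tau$ lies on the opposite-pair side, where the fiber mixes simplices of $\Delta$ with opposite pairs glued along the crossed relation; to contract it one uses that any two simplices lie in a common apartment, that the intersection of two apartments is convex (Serre), that projections onto residues are well defined and order-compatible, and that links of simplices and of Levi spheres are again spherical buildings. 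From these one extracts a canonical extremal element of the fiber, built from $\tau$ by a convex-hull or projection construction, or a canonical closure operator sweeping the fiber onto a subposet that visibly cones off.

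For $\Gamma$, an element $\alpha$ of $\CB(\Delta)$ is a set of vertices lying in a common apartment, and the fiber $\Gamma^{-1}\bigl(\CB(\Delta)_{\le\alpha}\bigr)$ is a subcomplex of $\sd\PD(\Delta)$ which, by the shape of $\Gamma$, is the order complex of a subposet $P_\alpha\subseteq\PD(\Delta)$: the simplices of $\Delta$ supported on $\alpha$ together with the Levi spheres supported on $\alpha$. One shows $P_\alpha$ is canonically contractible by exhibiting a canonical least Levi sphere supported on $\alpha$ --- the smallest Levi sphere through the frame vertices determined by $\alpha$ --- onto which the Levi-sphere part retracts by an order-preserving closure operator, the attached flags being swept into it; each operation involved (forming the Levi hull of $\alpha$, projecting a flag into a Levi sphere) is described intrinsically from $\alpha$ and $\Delta$ and hence is $\Stab(\alpha)$-equivariant.

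The main obstacle is controlling the crossed relations inside the fibers: one must determine, for a given $\tau$ or $\alpha$, exactly which simplices of $\Delta$ are related to which opposite pairs, respectively Levi spheres, occurring in the fiber, and then organize a single contraction compatible simultaneously with the inclusion order on the $\Delta$-part, the reverse-inclusion order on the $\OD$- or $\D$-part, and the crossed order, all while remaining canonical. This is exactly where the structural results on Levi spheres developed earlier are needed: their behaviour under intersection and under passage to links, the interaction of opposition with convexity, and the fact that the apartments through a fixed convex subcomplex form a contractible family. Granting these, the fiber contractions --- and hence, via the equivariant fiber lemma, the full statement of the theorem --- follow.
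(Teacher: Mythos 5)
Your overall skeleton (define the maps, check equivariance, reduce to contractibility of fibers, then apply Quillen's fiber theorem on fixed-point posets and conclude by equivariant Whitehead) is the same as the paper's, but the heart of the argument is missing, and the specific mechanism you propose for it does not work. Your ``equivariant fiber lemma'' requires each fiber to admit a $\Stab$-equivariant \emph{conical} contraction (a fixed extremal element or a closure operator), and the fibers here simply do not have one. For $\phi$, the fiber over an element $\tau$ of the second copy of $\Delta$ is all of $\X(\Delta)^H$ together with the pairs $(\sigma,\sigma')$ with $\sigma\supseteq\tau$; since $\tau$ has many opposites and none is canonical, there is no extremal element built ``from $\tau$ by a convex-hull or projection construction'', and the $\Delta$-part alone is a wedge of spheres, so nothing cones off. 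The paper instead covers this fiber by the subposets $\X(\Sigma)^H\ojoin(\OD(\Sigma)_{\leq(\tau,\op_\Sigma(\tau))})^H$ indexed by apartments $\Sigma$ through $\tau$, applies the nerve theorem, and contracts the intersections by CAT(1) geodesics toward the barycenter of $\tau$ (Lemma \ref{lm:contractibleIntersectionApartments}); this metric step is what makes the fixed-point statement work and it is not a poset-level conical contraction.

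The same problem occurs for $\Gamma$. After replacing $\CB(\Delta)$ by the poset $Y(\Delta)$ of convex subcomplexes (this closure-operator step does match your idea), the fiber over a convex subcomplex $K$ is $\big(\X(K)^H\ojoin\D_K^H\big)'$, and your proposed ``canonical least Levi sphere supported on $\alpha$'' need not exist: $\D_K$ can be empty (e.g.\ $K$ a chamber), and when it is non-empty it generally has no minimum unless $K$ is itself a Levi sphere (Lemma \ref{lm:convexIsleviSphere}), which is exactly the easy case. In the remaining case the paper proves contractibility of $\X(K)^H$ using Serre's complete-reducibility criterion \cite{Serre} together with the circumcenter argument of \cite{ramoscuevas}: the center $x_0$ is metrically characterized, hence $H$-fixed, and $K^H$ contracts along the unique geodesics to $x_0$. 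Note also that contractibility of $K$ alone would not suffice for the fixed-point claim; the equivariance has to be extracted from this metric uniqueness, not from a closure operator. Since your proposal explicitly defers these fiber contractions (``granting these\ldots''), and the mechanism you offer in their place fails, the proof has a genuine gap precisely at the main content of Theorems \ref{thm:PDandCB} and \ref{thm:OPDandDeltaDelta}.
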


Here, $\CB(\Delta)$ and $\Delta$ are regarded as posets via their face posets, and the superscript $H$ means that we are taking the $H$-fixed point subposet.

There is a canonical map $F:\OD(\Delta)\to \D(\Delta)$ that maps a pair of opposite simplices $(\sigma_1,\sigma_2)\in\OD(\Delta)$ to the Levi sphere spanned by $\sigma_1,\sigma_2$ (which is actually their convex hull, see Subsection \ref{sub:convexity}).

In \cite{vH}, A. von Heydebreck showed that $\OD(\Delta)$ is Cohen-Macaulay of dimension $\dim\Delta$.
Using the map $F$, we conclude:

\begin{corollary}
\label{coro:DDeltaCM}
Let $\Delta$ be a spherical building.
Then the poset of Levi spheres $\D(\Delta)$ is Cohen-Macaulay of dimension $\dim \Delta$.
\end{corollary}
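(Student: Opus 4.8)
The plan is to upgrade von Heydebreck's theorem for $\OD(\Delta)$ to one for $\D(\Delta)$ by viewing the canonical map $F\colon \OD(\Delta)\to\D(\Delta)$ as a ``fibration'' of posets and applying a poset fiber theorem (Quillen's Theorem~A together with its Cohen--Macaulay strengthenings in the style of Bj\"orner--Wachs--Welker). The first task is to record that $F$ is a surjection compatible with the combinatorial structure on both sides: it is order preserving; it is onto, since by the definition of a Levi sphere (Section~\ref{sec:leviSpheres}) every Levi sphere arises as the convex hull of a pair of opposite simplices; it sends minimal elements of $\OD(\Delta)$ (pairs of opposite chambers) to minimal elements of $\D(\Delta)$ (apartments) and maximal elements to maximal elements; and, since a pair of opposite $k$-simplices spans a Levi sphere of dimension $k$, the map $F$ is rank preserving. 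In particular $\D(\Delta)$ is pure of dimension $\dim\Delta$, being the image under a rank-preserving surjection of the pure poset $\OD(\Delta)$.

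The heart of the argument is the description of the fibers of $F$. Fix $S\in\D(\Delta)$. A Levi sphere is a convex subcomplex of $\Delta$ which is itself a (thin) spherical building, and opposition in $\Delta$ restricts to opposition inside $S$ (Subsection~\ref{sub:convexity}); hence
\[
F^{-1}\!\big(\D(\Delta)_{\ge S}\big)=\bigl\{(\sigma_1,\sigma_2)\in\OD(\Delta):\Conv(\sigma_1,\sigma_2)\subseteq S\bigr\}
\]
is naturally identified with $\OD(S)$, and $\D(\Delta)_{\ge S}$ with $\D(S)$, the poset of Levi spheres lying in $S$. Thus $F$ restricts over every principal up-set to a map of the same type, and by von Heydebreck's theorem applied to $S$ the fiber $\OD(S)$ is Cohen--Macaulay of dimension $\dim S=\operatorname{corank}_{\D(\Delta)}(S)$. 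On the other side $\D(\Delta)_{\le S}$ has $S$ as its greatest element---so it is a cone---and, being assembled from the decomposition posets of the ``blocks'' of $S$ (buildings of smaller dimension), is Cohen--Macaulay of dimension $\operatorname{rank}_{\D(\Delta)}(S)$; one checks analogously that $F^{-1}(\D(\Delta)_{\le S})$ is Cohen--Macaulay of the same dimension. These are precisely the hypotheses of a poset fiber theorem to the effect that a rank-preserving surjection of pure posets whose fibers over principal intervals are Cohen--Macaulay of the dimension prescribed by the base transports the Cohen--Macaulay property; since $\OD(\Delta)$ is Cohen--Macaulay of dimension $\dim\Delta$ by \cite{vH}, so is $\D(\Delta)$. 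It is convenient to organize everything as an induction on $\dim\Delta$, the identifications $F^{-1}(\D(\Delta)_{\ge S})\cong\OD(S)$ and $\D(\Delta)_{\ge S}\cong\D(S)$ reducing the fiber conditions over proper up-sets to the inductive hypothesis and to \cite{vH}.

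The step I expect to be the genuine obstacle is precisely this fiber analysis: pinning down which pairs of opposite simplices span a prescribed Levi sphere, and proving that the resulting subposets are Cohen--Macaulay of the predicted dimensions. This is where the opposition and convexity machinery of the earlier sections is indispensable, and where one must handle with care the degenerate and reducible cases---low-dimensional Levi spheres, reducible types, and the thin buildings occurring as Levi spheres---so that von Heydebreck's theorem and the partition-poset computations apply uniformly. Once the fibers are understood, the invocation of the fiber theorem and the bookkeeping with rank functions are routine.
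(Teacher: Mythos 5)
Your skeleton is the same as the paper's (the map $F:\OD(\Delta)\to\D(\Delta)$, and the identification of the fiber $F^{-1}(\D(\Delta)_{\geq S})$ with the pairs of opposite simplices inside $S$), but the decisive step has a genuine gap. You conclude by invoking a poset fiber theorem that ``transports'' Cohen--Macaulayness from the total poset $\OD(\Delta)$ to the base $\D(\Delta)$. The fiber theorems actually available (Quillen's Cohen--Macaulay fiber theorem, Bj\"orner--Wachs--Welker) go in the opposite direction: Cohen--Macaulay base plus Cohen--Macaulay fibers gives a Cohen--Macaulay total space, and the homotopy version gives a wedge decomposition of the \emph{total} poset. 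What one can extract in your direction is only that $\widetilde{H}_i(\D(\Delta))$ is a direct summand of $\widetilde{H}_i(\OD(\Delta))$, hence vanishes below the top degree by \cite{vH}; this is exactly how the paper proves sphericity of $\D(\Delta)$ in Theorem \ref{thm:DisCM}, via Theorem \ref{thm:wedgeDecomposition}. It does \emph{not} give the sphericity of the open intervals of $\D(\Delta)$, which is most of the content of Cohen--Macaulayness and is not controlled by the fibers over principal up-sets and down-sets (the upper intervals of the base are invisible to that data). The paper proves the interval conditions separately: $\D(\Delta)_{>S}$ is an open upper interval in the intersection lattice of the Coxeter arrangement of an apartment containing $S$, hence spherical of dimension $\dim S-1$ by Proposition \ref{prop:upperIntervalsDecompDelta}, and $\D(\Delta)_{<S}\cong \D(\Lk_{\Delta}(\sigma_S))$ by Lemma \ref{lem:lowerIntervalsDecompDelta}, which allows induction on $\dim\Delta$ because the link \emph{is} a building. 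Unless you can point to an actual theorem of the ``total CM $\Rightarrow$ base CM'' form, you must supply these interval arguments yourself, and then your fiber theorem is no longer needed beyond the wedge decomposition.

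A second, related problem is your repeated treatment of a Levi sphere $S$ as a ``thin spherical building'': you justify Cohen--Macaulayness of the fiber by applying von Heydebreck's Theorem \ref{thm:vonHeydebreck} to $S$, and of $\D(\Delta)_{\geq S}\cong\D(S)$ by the inductive hypothesis. A Levi sphere is a convex subcomplex of an apartment that triangulates a sphere (Lemma \ref{lm:convexIsleviSphere}), but it need not be a Coxeter complex: it carries the structure of a \emph{restricted} reflection arrangement, and restrictions of reflection arrangements are in general not reflection arrangements, so neither \cite{vH} nor the induction applies to $S$ as stated. For the fiber this is harmless and the fix is the paper's own observation: every simplex of $S$ has a unique opposite in $S$, so $F^{-1}(\D(\Delta)_{\geq S})\cong\X(S)^{\op}$ is the face poset of a triangulated sphere and is trivially Cohen--Macaulay of dimension $\dim S$; no appeal to von Heydebreck is needed there. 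For the interval $\D(\Delta)_{\geq S}$, however, the repair is not induction on $S$ but the geometric-lattice description above (its elements are the flats of the arrangement containing the flat of $S$). Your description of $\D(\Delta)_{<S}$ via ``blocks'' of $S$ is a type-A picture; the correct general mechanism is the link isomorphism of Lemma \ref{lem:lowerIntervalsDecompDelta}.
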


We mention that Corollary \ref{coro:DDeltaCM} can potentially be applied to compute the homotopy type of Quillen's $p$-subgroup posets \cite{Qui78} of connected reductive algebraic groups $G$ over finite fields $\GF{q}$, where $p$ is a ``good" prime for which $q$ has order $1$ modulo $p$.
This is based on the work by D. Rossi \cite{Rossi}, where it is proved that for such a good prime $p$, the $p$-subgroup poset of the finite group $G(\GF{q})$ has the homotopy type of the poset of $\GF{q}$-Levi subgroups, which coincides with the poset $\D(\Delta(G,\GF{q}))$.
Here $\Delta(G,\GF{q})$ is the building of $G(\GF{q})$.

We will write $|S|$ to denote the geometric realization of the Levi sphere $S$, which is a sphere of dimension $\dim S$ (see Lemma \ref{lm:convexIsleviSphere}).

The map $F$ naturally extends to an order-preserving map $F:\OPD(\Delta)\to \PD(\Delta)$ as the identity on the $\Delta$-part.
Studying fibers of this map, we prove:

\begin{theorem}
Let $\Delta$ be a spherical building.
Then we have a homotopy equivalence
\begin{equation}
\OPD(\Delta) \simeq \PD(\Delta) \bigvee_{ T \in {\D(\Delta)}} |T| * |T|*\PD(\Lk_{\Delta}(\sigma_T)),
\end{equation}
where $\sigma_T\in T$ is some maximal simplex.
In particular, $\PD(\Delta)$ is spherical of dimension $2\dim\Delta + 1$, and $\CB(\Delta)$ has the homotopy type of a wedge of spheres of dimension $2\dim\Delta + 1$.

Moreover, for a Levi sphere $S$ and a pair of opposite maximal simplices $\sigma_1,\sigma_2\in S$, we have
\begin{align*}
\PD(\Delta)_{\prec S} & \simeq |S| * \PD(\Lk_{\Delta}(\sigma_1))\\
\OPD(\Delta)_{\prec (\sigma_1,\sigma_2)}  & \simeq |S| * \OPD(\Lk_{\Delta}(\sigma_1)).    
\end{align*}
\end{theorem}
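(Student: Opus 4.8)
The plan is to analyze the order-preserving map $F\colon\OPD(\Delta)\to\PD(\Delta)$ through its fibers and to feed the outcome into a poset fiber theorem in the style of Björner--Wachs--Welker, running a single induction on $\dim\Delta$ (so that one may invoke the statement for $\Lk_\Delta(\sigma_1)$, whose dimension is strictly smaller because $\sigma_1$ is nonempty; the empty building is a trivial base case by convention). Recall that $F$ is the identity on the $\Delta$-part and sends $(\sigma_1,\sigma_2)$ to $\Conv(\sigma_1\cup\sigma_2)$.

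First I would prove the two ``Moreover'' interval formulas, which supply the local input for everything else. Fix a Levi sphere $S$ and opposite maximal simplices $\sigma_1,\sigma_2\in S$, so $S=\Conv(\sigma_1\cup\sigma_2)$ and $\dim\sigma_i=\dim S$. Using convexity one checks that a simplex of $\Delta$ lies in a common apartment with $\sigma_1,\sigma_2$ precisely when it lies in a common apartment with $S$, and that an opposite pair strictly below $(\sigma_1,\sigma_2)$ corresponds, after passing to links, to an opposite pair of $\Lk_\Delta(\sigma_1)$; this gives $\OPD(\Delta)_{\prec(\sigma_1,\sigma_2)}=C_S\cup\OD(\Lk_\Delta(\sigma_1))$ and $\PD(\Delta)_{\prec S}=C_S\cup\{S'\in\D(\Delta)\mid S'\supsetneq S\}$, where $C_S:=\{\sigma\in\Delta\mid\sigma\prec S\}$ and $\{S'\supsetneq S\}\cong\D(\Lk_\Delta(\sigma_1))$ via the Levi sphere it induces in the transverse building. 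The substance is then to show that $C_S\simeq |S|*|\Lk_\Delta(\sigma_1)|$ — peeling off the ``$S$-direction'' as a join factor — and that adjoining the $\OD$- (resp.\ $\D$-) part turns $|\Lk_\Delta(\sigma_1)|$ into $\OPD(\Lk_\Delta(\sigma_1))$ (resp.\ $\PD(\Lk_\Delta(\sigma_1))$), so that
\[
\OPD(\Delta)_{\prec(\sigma_1,\sigma_2)}\simeq |S|*\OPD(\Lk_\Delta(\sigma_1)),\qquad \PD(\Delta)_{\prec S}\simeq |S|*\PD(\Lk_\Delta(\sigma_1)).
\]
For this I would exploit the structure theory of Levi spheres (Section~\ref{sec:leviSpheres}) together with convexity (Subsection~\ref{sub:convexity}): a simplex lies in a common apartment with $S$ exactly when its $\sigma_1$-transverse part lies in $\Lk_\Delta(\sigma_1)$ while its $S$-direction is governed by $S$ itself, so the natural retractions onto $|S|$ and onto $\Lk_\Delta(\sigma_1)$ should exhibit $C_S$ as the claimed join, and the finer Levi spheres $S'\supsetneq S$ attach along the subcomplexes $C_{S'}$ in a way that, by induction, replaces $\OPD$ by $\PD$ in the transverse factor. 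The hard part will be making this ``join along $S$, $\OPD/\PD$ in the transverse building'' picture precise — in particular controlling the relevant deformation retraction compatibly with the poset of finer Levi spheres.

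Next I would compute the fibers of $F$. Over $\sigma\in\Delta$ the down-fiber $F^{-1}(\PD(\Delta)_{\ge\sigma})$ has $\sigma$ as least element, hence is contractible. Over a Levi sphere $S$, convexity forces any opposite pair $(\tau_1,\tau_2)$ with $\Conv(\tau_1\cup\tau_2)\subseteq S$ to satisfy $\tau_1,\tau_2\subseteq S$, and opposition inside the thin subcomplex $S$ coincides with opposition in $\Delta$ (Subsection~\ref{sub:opposition}); hence $F^{-1}(\PD(\Delta)_{\ge S})\cong\OD(S)$. Because $S$ is thin, every simplex of $S$ has a unique opposite, so $(\tau,\tau')\mapsto\tau$ identifies $\OD(S)$ with the face poset of $S$ under reversed inclusion; its order complex is therefore $\sd S$, and $\Delta(\OD(S))\simeq|S|\simeq S^{\dim S}$, which is $(\dim S-1)$-connected — precisely the range in which the wedge form of the fiber theorem applies, since $\PD(\Delta)_{\ge S}\cong\D(S)$ is Cohen--Macaulay of dimension $\dim S$ by Corollary~\ref{coro:DDeltaCM}.

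Finally I would apply the poset fiber theorem to $F$. The contractible fibers over the $\Delta$-part contribute nothing, while each Levi sphere $T$ contributes the summand $\Delta(\OD(T))*\PD(\Delta)_{\prec T}\simeq |T|*|T|*\PD(\Lk_\Delta(\sigma_T))$ by the two preceding steps; this is exactly the asserted wedge decomposition. For the remaining assertions: by the inductive hypothesis $\PD(\Lk_\Delta(\sigma_T))$ is a wedge of spheres of dimension $2(\dim\Delta-\dim T-1)+1$, so each summand is a wedge of $(2\dim\Delta+1)$-spheres; since $\OPD(\Delta)\simeq\Delta*\Delta$ is already a wedge of $(2\dim\Delta+1)$-spheres by the preceding theorem (via $\phi$), its wedge summand $\PD(\Delta)$ is spherical of dimension $2\dim\Delta+1$, and then $\CB(\Delta)\simeq\PD(\Delta)$ via $\Gamma$ gives the same conclusion for $\CB(\Delta)$. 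As the two ``Moreover'' equivalences were established in the second step, the induction closes.
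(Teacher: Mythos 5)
Your fiber-theoretic skeleton is essentially the paper's: you use the same forgetful map $F\colon\OPD(\Delta)\to\PD(\Delta)$, compute the same fibers (contractible over a simplex $\sigma$ because $\sigma$ is the least element of its fiber; isomorphic to $\X(T)^{\op}$, hence $\simeq|T|$, over a Levi sphere $T$), feed them into the wedge form of the fiber theorem (Theorem \ref{thm:wedgeDecomposition}), and conclude sphericity of $\PD(\Delta)$ as a wedge summand (retract) of $\OPD(\Delta)\simeq\Delta*\Delta$, transferring to $\CB(\Delta)$ via $\Gamma$. All of that is fine. The genuine gap is your first step. The two ``Moreover'' formulas --- and with them the identification $\PD(\Delta)_{\prec T}\simeq|T|*\PD(\Lk_{\Delta}(\sigma_T))$ of each wedge summand, which your application of the fiber theorem needs --- are where the actual content of the theorem lies, and your proposal does not establish them. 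You reduce them to the claims that $C_S=\{\sigma\in\Delta\tq\sigma\prec S\}$ is homotopy equivalent to $|S|*|\Lk_{\Delta}(\sigma_1)|$ and that adjoining the $\OD$-/$\D$-part converts the transverse factor into $\OPD(\Lk_{\Delta}(\sigma_1))$ resp.\ $\PD(\Lk_{\Delta}(\sigma_1))$, but you only say that the ``natural retractions should exhibit'' this, and you yourself flag that making it precise, compatibly with the poset of finer Levi spheres, is the hard part. No such retraction is constructed, and the phrase ``by induction, replaces $\OPD$ by $\PD$ in the transverse factor'' is not an argument. As written, the wedge decomposition is proved only modulo an unproven identification of its summands, and the Moreover statements are not proved at all.

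What fills this hole in the paper is material that precedes the theorem and that you could simply have cited: Theorem \ref{thm:PDandCB}(2)--(3) gives $\PD(\Delta)_{\prec S}\simeq|S|*Y(\Delta)_{\supsetneq S}\simeq|S|*Y(\Lk_{\Delta}(\sigma_1))$, and item (1) applied to the building $\Lk_{\Delta}(\sigma_1)$ turns $Y(\Lk_{\Delta}(\sigma_1))$ into $\PD(\Lk_{\Delta}(\sigma_1))$; the proofs there are genuinely nontrivial (the convex-subcomplex poset $Y(\Delta)$, Quillen's fiber theorem, Serre's complete-reducibility criterion and a CAT(1) circumcenter argument, plus the root-theoretic Proposition \ref{prop:intervalSphereYPoset}), which is why an appeal to unspecified retractions cannot replace them. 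The second Moreover formula is obtained in the paper by running the same wedge decomposition over the interval $Y=\PD(\Delta)_{\prec S}$ --- where the fiber over $T$ is $\Lk_T(\sigma_1)$, not $|T|$, a case your global-only fiber computation does not cover --- and comparing with the decomposition of $|S|*\OPD(\Lk_{\Delta}(\sigma_1))$ via Lemma \ref{lem:lowerIntervalsDecompDelta}. Either cite these results or supply the missing geometric argument; without one of the two the proof is incomplete. (Minor: Cohen--Macaulayness of $\PD(\Delta)_{\geq S}$ is not the relevant hypothesis for the wedge theorem; what is needed, and what your dimension-versus-connectivity remark does give, is that the inclusion of the strict fiber into the closed fiber over each Levi sphere is null-homotopic.)
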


Here we are using the bar notation for Levi spheres specifically to distinguish between $S$ as a sphere (via $|S|$) and $S$ as an element of the poset $\D(\Delta)$.

If $\Delta$ is the building of a finite-dimensional vector space $V$, then $\CB(V) = \CB(\Delta)$, $\PD(\Delta)\simeq \PD(V)$ and $\OPD(\Delta)\simeq \OPD(V)$.
In particular, our results recover the connectivity result --- i.e., Rognes's conjecture --- for the common bases complex of a vector space, proved first in \cite{MPW}.
Our proof is independent of that of \cite{MPW}, and it is based on geometric properties of buildings.
To prove our homotopy equivalences, we use the notion of convexity and complete reducibility introduced by Serre.
Key ingredients in our proofs are statements of the form ``a certain (convex) subcomplex $K$ of the building is contractible."  For that, we show that $K$ is not completely reducible, that is, there is a simplex in $K$ without an opposite in $K$.
This implies that $K$ is contractible by \cite{Serre}.
We extend this idea to prove that certain fixed-point subposets are contractible.

\subsection*{Organization of the article}

The paper is organized as follows.
In Section \ref{sec:preliminaries} we introduce the notation from algebra, combinatorics, geometry, and topology that we will use throughout the paper. We also recall results that will play an important role in the rest of the paper. In particular,  we outline the results of the theory of Tits buildings, mostly following \cite{AB} and sometimes \cite{Tits1}.
In Section \ref{sec:vectorspaces}, we study the motivating type A situation.
More precisely, we give exact definitions of the ordered and unordered (partial) decomposition posets and the common basis complex for vector spaces. We also provide the proof that the ordered partial vector space decompositions are equivariantly homotopy equivalent to the poset of ordered partial decompositions of the corresponding building.

In Sections \ref{sec:CBBuildings}, \ref{sec:leviSpheres} and \ref{sec:opd}, we define common basis complexes, ordered and unordered decomposition, and partial decomposition posets for arbitrary buildings.
We provide results about homotopy types and homotopy equivalences, including equivariant versions and fixed points. Some of them extend
facts from the linear case discussed in
Section \ref{sec:vectorspaces}.

Finally, in Section \ref{sec:algebraicgroups} we specialize these results to the case of rational points of connected reductive algebraic groups, obtaining explicit descriptions of the posets and complexes in terms of rational parabolics and Levi subgroups.
In particular, as an application of the equivariance of our maps and the connectivity results, in Proposition \ref{prop:lowLeviIntervalPD} we provide a long exact sequence in terms of Steinberg-square modules of rational Levi subgroups for the rational points of a connected reductive algebraic group, ending in the Steinberg module of such a group.

Computer calculations were performed with GAP \cite{GAP4}
and software package \cite{posets}.

\subsection*{Acknowledgments}
We thank Bernhard Mühlherr and Richard Weiss for helpful and motivating conversations.
The first author was supported by the FWO grant 12K1223N.

\section{Preliminaries}
\label{sec:preliminaries}

The aim of this section is to set the notation on posets and simplicial complexes, and recall some basic facts on the theory of buildings developed by J. Tits.
For the building theory, we will mainly use \cite{AB} and \cite{Tits1}.

%% Preliminaries topological spaces
Let $X$ be a topological space, and let $m\in \ZZ$.
If $m\leq -2$, then every space is termed $m$-connected.
For $m=-1$, $X$ is $(-1)$-connected if and only if it is non-empty.
If $m\geq 0$, we say that $X$ is $m$-connected if its homotopy groups of degree $\leq m$ vanish (regardless of the base point).
If $X$ is $m$-connected and $Y$ is $n$-connected, then the topological join $X*Y$ is $(m+n+2)$-connected (see \cite{Milnor}).
For our applications, it is
convenient to deviate from the usual definition of join and define the
join of $X$ with the empty space as $X$. Note that the formula for
the connectivity of the join is consistent with this convention if
we consider the empty space as $(-2)$-connected.
For $m\geq 0$, a continuous map $f:X\to Y$ is an $m$-equivalence if it induces an isomorphism between homotopy groups of degree $< m$, and an epimorphism in degree $m$.

We will always work with reduced homology $\widetilde{H}_*(X,R)$ with coefficients in a ring $R$. 
We write $\widetilde{H}_*(X)$ for reduced homology
$\widetilde{H}_*(X,\ZZ)$ with coefficients in the integers.

%% Preliminaries simplicial complexes
\subsection{Simplicial complexes}
Let $K$ be a simplicial complex, and write $\dim K$ for its dimension.
By convention, we assume that every (abstract) simplicial complex contains the
empty set. A subset $L \subseteq K$ which is also a simplicial complex is a subcomplex of $K$ and $L$ is called a full subcomplex if it contains all simplices of $K$ contained in the vertex set of $L$. 
When $\dim K$ is finite, we say that $K$ is pure if all of its maximal simplices have dimension $\dim K$.
The geometric realization of $K$ is denoted by $|K|$.
We write $K'$ for the first barycentric subdivision of $K$, so $|K'| = |K|$.
More generally, if $L$ is any set of simplices of $K$, $L'$ denotes the simplicial complex whose simplices are chains on elements of $L$.
If $G$ is a group acting on $K$ by simplicial automorphisms, we say that $K$ is a $G$-complex.
It is clear then that $|K|$ is a $G$-space, and we can talk about its $G$-equivariant homotopy type.
We will also regard $K$ as a poset via its face poset, which we will usually denote by $K$, or $\X(K)$ to stress the difference.
By convention, we think of the empty set as a simplex of $K$, but when working with the face poset of $K$, we remove this element (otherwise the empty simplex would always be a unique minimal element for $\X(K)$, rendering $|\X(K)|$ contractible).
If $\sigma \in K$, then we write $\Lk_{K}(\sigma)$ for the set of simplices of $K$ that contain $\sigma$.
Although $\Lk_{K}(\sigma)$ is not the classical simplicial link, it is naturally isomorphic to it via the map $\tau\mapsto \tau\setminus \sigma$ (so $\sigma$, as the bottom element of $\Lk_{K}(\sigma)$, is regarded as the empty simplex of the classical simplicial link).
Via this identification we can regards $\Lk_K(\sigma)$ as a simplicial
complex.
For the purposes of this article, it will be convenient to work with this definition of the link.
If $K$ is pure, the codimension of a non-empty simplex $\sigma$ is $\codim_K(\sigma) = \dim K - \dim \sigma$, which coincides with $1+\dim \Lk_{K}(\sigma)$.

Suppose now that $K$ is finite-dimensional.
We say that $K$ is spherical if it has the homotopy type of a wedge of spheres of dimension $\dim K$.
Equivalently, $K$ is $(\dim K -1)$-connected.
We say that $K$ is Cohen-Macaulay if $\Lk_K(\sigma)$, regarded as a simplicial complex, is spherical of dimension $\dim K - |\sigma|$ for all $\sigma\in K$.
In particular, if $K$ is Cohen-Macaulay, it is a pure and spherical simplicial complex.
If $\tau$ is a non-empty face of $\sigma$, then the codimension of $\tau$ in $\sigma$ is $\codim_{\sigma}(\tau) = \dim \sigma - \dim\tau = |\sigma|-|\tau|$.

If $K$ and $L$ are $G$-complexes whose geometric realizations have the same ($G$-equivariant) homotopy type, then we write $K\simeq L$ (resp. $K\simeq_G L$).
The join of $K$ and $L$ is the simplicial complex $K * L$ whose underlying vertex set is the disjoint union of the vertex sets of $K$ and $L$, and simplices are of the form $\sigma\cup \tau$, where $\sigma\in K$ and $\tau \in L$.
Then $|K * L|$ is naturally homeomorphic to the topological join $|K|*|L|$.
Note that $K*L$ is naturally a $G$-complex.

Finally, we write $K^G$ for the set of simplices $\sigma \in K$ that are stable under the action of $G$.
This does not mean that $G$ fixes every vertex of $\sigma$, but rather that $G$ permutes these vertices.
Therefore in general, $K^G$ is not a simplicial complex, but $(K^G)' = (K')^G$ is, and $|(K')^G| = |K|^G$.
In particular, we write $|K^G|$ for the geometric realization of $(K^G)'$.
The $G$-stabilizer of a simplex $\sigma \in K$ is denoted by $\Stab_G(\sigma)$.

\subsubsection{Chamber complexes and convexity}
\label{subsub:chambercomplexes}
Suppose that $K$ is a pure simplicial complex.
We denote by $\cham K$ the set of maximal simplices of $K$, which we also call chambers.
From $\cham K$ we can form a graph with vertex set $\cham K$ in which two chambers are adjacent if they share a codimension-one face.
A gallery in $K$ is a path in this graph, that is, a sequence $\sigma_0,\sigma_1,\ldots,\sigma_r$ of chambers of $K$ such that $\sigma_i$ and $\sigma_{i+1}$ share a codimension-one face for all $0\leq i <r$.
In this case, the length of the gallery is $r$.
We assume that the given graph is connected, in which case $K$ is called a chamber complex.
The distance between two chambers of $K$ is the minimal length of a gallery joining them.
The distance between two arbitrary simplices of $K$ is the minimal possible distance between two chambers that respectively contain them.
The diameter of $K$ is the maximal possible distance between two chambers of $K$.
A set $\C$ of chambers of $K$ is convex if every minimal gallery whose extremities lie in $\C$ is completely contained in $\C$.
A convex chamber subcomplex of $K$ is a subcomplex $L$ that is a chamber complex with $\cham L\subseteq \cham K$, such that $\cham L$ is convex as a subset of $\cham K$.
An arbitrary subcomplex $L$ of $K$ is convex if it is an intersection of convex chamber subcomplexes.
Note that convex subcomplexes are always full subcomplexes.
The full convex hull of a set $S$ of simplices of $K$ is denoted by $\Conv_K(S)$ and is the smallest convex subcomplex of $K$ containing $S$.

Finally, simplicial automorphisms preserve convexity, so for any automorphism $\phi$ of $K$, and $S \subseteq K$, we have $\varphi(\Conv_K(S)) = \Conv_K(\varphi(S))$.

%% Preliminaries posets
\subsection{Posets}
Let $X$ be a poset.
If $x\in X$, then we write $X_{\leq x}$ for the set of elements $y\in X$ such that $y\leq x$.
Analogously defined are $X_{<x}, X_{\geq x}$ and $X_{>x}$.
If $X$ has a unique maximal element $1_X$, we write $\redm{X}$ for the subposet $X\setminus \{1_X\}$.
If in addition $X$ has a unique minimal element $0_X$, then we say that $X$ is bounded and write $\red{X} = X \setminus \{1_X, 0_X\}$ to denote the proper part of $X$.
The order complex of $X$ will be denoted by $\K(X)$.
By the geometric realization of $X$ we mean the geometric realization of its order complex, and we denote it by $|X|$ or $|\K(X)|$.
The dimension (or height) of $X$, denoted by $\dim X$ (resp. $h(X)$), is the dimension of its order complex.
The height of an element $x\in X$ is the dimension of the lower interval $X_{\leq x}$, and we denote it by $h(x)$, or $h_X(x)$ to emphasize the role of $X$.
We write $X^{(i)}$ for the subposet of $X$ consisting of elements of height at most $i$.
The first barycentric subdivision of $X$, denoted by $X'$, is the poset of finite non-empty chains in $X$.
That is, $X' = \X(\K(X))$.
Finally, we write $X^{\op}$ for the opposite poset of $X$.  So, $X^{op}$ is obtained from $X$ by reversing the ordering relation.
Note $\K(X)$ is the same as $\K(X^{\op})$, hence $X' = (X^{\op})'$, and $|X| = |X^{\op}|$.

If $G$ is a group, we say that $X$ is a $G$-poset if it admits an action of $G$ by poset automorphisms.
From now on, we assume that $X$ is a $G$-poset.
Thus, $\K(X)$ is a $G$-complex, and the ($G$-equivariant) homotopy type of $X$ is the ($G$-equivariant) homotopy type of its geometric realization.

We say that $X$ is $m$-connected for $m\in \ZZ$ if its order complex is.
Similarly, if $X$ is finite-dimensional, we say that $X$ is spherical (resp. Cohen-Macaulay) if its order complex is.

If $X$ and $Y$ are two posets, their join, denoted by $X\ojoin Y$, is the poset whose underlying set is the disjoint union of $X$ and $Y$, and the ordering is as follows.
We keep the same ordering among elements of $X$ and among elements of $Y$, and every element of $X$ is less than every element of $Y$.
Note that $X\ojoin Y$ and $Y\ojoin X$ are not isomorphic in general.
Nevertheless, it is evident that $\K(X\ojoin Y) \cong \K(X) * \K(Y) \cong \K(Y\ojoin X)$, so $|X\ojoin Y|\cong |X| * |Y|$.
When there is no risk of confusion --- for instance, when talk about the homotopy type --- we will use the classical join notation and write $X * Y$ for $X\ojoin Y$.

If $f,g:X\to Y$ are two order-preserving $G$-equivariant maps between $G$-posets, then we write $f\leq g$ if $f(x)\leq g(x)$ for all $x\in X$.
In such a case, $f,g$ induce $G$-equivariant homotopic maps on the geometric realizations.
In particular, if $X$ has a unique minimal element, or a unique maximal element, then $X$ is $G$-contractible.
If $X$ and $Y$ have the same ($G$-equivariant) homotopy type, we write $X\simeq Y$ (resp. $X\simeq_G Y$).
If $f:X \to X$ is a $G$-equivariant and order-preserving map with $f(x) \leq x$ for all $x \in X$, then $f$ induces a $G$-equivariant homotopy equivalence $X \simeq_G \Im(f)$.

We denote by $X^G$ the subposet of $X$ whose elements are fixed by $G$.
Note that $\K(X^G) = \K(X)^G$ is a simplicial complex, and $|X^G| = |X|^G$.
Similarly, we write $\Stab_G(x)$ for the $G$-stabilizer of a point $x\in X$.

We will use the following theorems to compute the homotopy type of posets and prove that certain maps are homotopy equivalences.

\begin{theorem}
[Quillen's fiber theorem \cite{Qui78}]
\label{thm:quillen}
Let $f:X\to Y$ be an order-preserving map between posets.
If $f^{-1}(Y_{\leq y})$ is contractible for all $y\in Y$, then $f$ is a homotopy equivalence.
\end{theorem}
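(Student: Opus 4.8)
The plan is to factor $f$ through a ``mapping cylinder'' poset on which the relevant comparisons can be carried out formally, and then to isolate the single genuinely homotopy-theoretic input. Concretely, I would introduce the Grothendieck-construction poset $Z=\{(x,y)\in X\times Y\tq f(x)\le y\}$, ordered componentwise, together with its two order-preserving projections $p\colon Z\to X$ and $\pi\colon Z\to Y$. The section $s\colon X\to Z$, $x\mapsto(x,f(x))$, is order-preserving, takes values in $Z$, and satisfies $p\circ s=\id_X$ as well as $s\circ p\le\id_Z$ (the latter because $f(x)\le y$ holds throughout $Z$). By the fact recalled in Section~\ref{sec:preliminaries} — an order-preserving self-map dominated by the identity is a homotopy equivalence onto its image — $|p|$ is a homotopy equivalence, so $|Z|\simeq|X|$. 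Since moreover $f\circ p\le\pi$ as maps $Z\to Y$, the induced maps $|f\circ p|$ and $|\pi|$ are homotopic; hence it suffices to show that $\pi$ is a homotopy equivalence, for then $|f|\circ|p|=|f\circ p|\simeq|\pi|$ forces $|f|$ to be a homotopy equivalence.

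The next step is to understand the down-fibers of $\pi$. Fixing $y_0\in Y$ and writing $P=\pi^{-1}(Y_{\le y_0})=\{(x,y)\in Z\tq y\le y_0\}$, the self-map $v\colon P\to P$, $(x,y)\mapsto(x,f(x))$, is order-preserving, idempotent, and satisfies $v\le\id_P$ (one only uses $f(x)\le y\le y_0$ to see that $v$ lands in $P$). Hence $|P|\simeq|\Im v|$, and $\Im v$ is isomorphic as a poset to $f^{-1}(Y_{\le y_0})$ via $x\mapsto(x,f(x))$, which is contractible by hypothesis. So every down-fiber $\pi^{-1}(Y_{\le y_0})$ is contractible — formally the same hypothesis as for $f$, but now attached to the canonical projection out of a Grothendieck construction, which is what makes the final step available.

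Indeed, $Z$ is exactly the Grothendieck construction (poset of elements) of the functor $Y\to\mathbf{Poset}$, $y\mapsto f^{-1}(Y_{\le y})$, with $\pi$ the projection to $Y$. I would then invoke the standard fact that $|Z|\simeq\hocolim_{y\in Y}|f^{-1}(Y_{\le y})|$, compatibly with $\pi$ and the canonical map $\hocolim_{y\in Y}|f^{-1}(Y_{\le y})|\to\hocolim_{y\in Y}(\ast)=|Y|$ — this is Thomason's theorem, equivalently the homotopy-colimit (``projection lemma'') form of the nerve lemma for diagrams indexed by a poset. Granting it, homotopy invariance of the homotopy colimit and the contractibility of each $|f^{-1}(Y_{\le y})|$ give $\hocolim_{y\in Y}|f^{-1}(Y_{\le y})|\simeq|Y|$, under which $\pi$ corresponds to $\id_{|Y|}$; hence $\pi$, and therefore $f$, is a homotopy equivalence.

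The main obstacle is precisely this last input: it genuinely cannot be obtained by formal poset manipulations (comparisons of order-preserving maps), so one must either cite the homotopy-colimit machinery as above or prove the needed special case by hand. For a self-contained argument in the case that $Y$ is finite I would induct on $|Y|$: pick a maximal element $y_1\in Y$, exhibit $|Z|$ as the union of the subcomplexes $|\pi^{-1}(Y\setminus\{y_1\})|$ and $|\pi^{-1}(Y_{\le y_1})|$ (the latter contractible) meeting in $|\pi^{-1}(Y_{<y_1})|$, apply the inductive hypothesis to the two pieces and to $Y\setminus\{y_1\}$, and finish with a Mayer--Vietoris / pushout comparison; the general case then follows by passing to the colimit over finite subposets. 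This bookkeeping, rather than any of the preceding formal reductions, is the part that needs care.
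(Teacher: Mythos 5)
The paper does not prove this statement at all; it is quoted directly from Quillen \cite{Qui78} and used as a black box, so there is no in-paper argument to compare yours against. Your proposal is, in essence, the standard modern proof of Quillen's fiber lemma: the poset $Z=\{(x,y)\tq f(x)\le y\}$ is the order-theoretic avatar of the bisimplicial set in Quillen's original Theorem~A argument, and your formal reductions are all correct — $p\circ s=\id_X$ together with $s\circ p\le\id_Z$ makes $p$ a homotopy equivalence, $f\circ p\le\pi$ identifies $|f|$ with $|\pi|$ up to the equivalence $|p|$, and the retraction $(x,y)\mapsto(x,f(x))$ correctly identifies the down-fibers of $\pi$ with those of $f$ up to homotopy. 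Recognizing $Z$ as the Grothendieck construction of $y\mapsto f^{-1}(Y_{\le y})$ and invoking Thomason's theorem plus homotopy invariance of homotopy colimits (the projection lemma) is a legitimate and complete way to finish, and it covers arbitrary posets, which is what this paper needs since its posets are typically infinite.

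The only genuinely shaky point is the closing aside: the claim that the general case "follows by passing to the colimit over finite subposets" does not work as stated, because the contractible-fiber hypothesis is not inherited by restrictions. For a finite subposet $F\subseteq Y$, the down-fiber of $\pi$ restricted to $\pi^{-1}(F)$ over $y_0\in F$ is $\{(x,y)\tq f(x)\le y\le y_0,\ y\in F\}$, and your retraction $(x,y)\mapsto(x,f(x))$ no longer lands in $\pi^{-1}(F)$ since $f(x)$ need not lie in $F$; so these restricted fibers need not be contractible and the inductive finite-case statement does not directly pass to the limit. This is harmless for your main argument — the Thomason/hocolim route already handles infinite $Y$, and the finite-poset induction via the gluing lemma at a maximal element is fine as a self-contained proof in the finite case — but if you want a machinery-free proof in full generality you should instead follow Quillen's bisimplicial (or the carrier/nerve-theoretic) argument rather than a finite-subposet exhaustion.
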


Another useful tool that we will use is the wedge decomposition:

\begin{theorem}
\label{thm:wedgeDecomposition}
Let $X,Y$ be posets of finite dimension.
Suppose that $f:X\to Y$ is an order-preserving map such that for all $y\in Y$, $f^{-1}\big(\,Y_{<y}\,\big) \hookrightarrow f^{-1}\big(\,Y_{\leq y}\,\big)$ is homotopic 
to a constant map with image $c_y\in f^{-1}(Y_{\leq y})$.

Then there is a homotopy equivalence
\[ X \simeq Y \vee \bigvee_{y\in Y} f^{-1}\big(\,Y_{\leq y}\,\big) * Y_{>y},\]
where the wedge identifies $y\in Y$ with $c_y\in f^{-1}(Y_{\leq y})$.
\end{theorem}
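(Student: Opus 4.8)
The plan is to identify $|X|$, up to homotopy, with the homotopy colimit over $Y$ of the diagram of fibers $y\mapsto |f^{-1}(Y_{\leq y})|$, and then to invoke the ``wedge lemma'' for homotopy colimits, whose hypothesis is exactly the one we are handed.

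First I would introduce the Grothendieck-construction poset
\[
E \;=\; \{(x,y)\in X\times Y \tq f(x)\leq y\},
\]
ordered componentwise, together with the two projections $\pi_1\colon E\to X$ and $\pi_2\colon E\to Y$. For $\pi_1$ the fiber $\pi_1^{-1}(X_{\leq x})=\{(x',y)\tq x'\leq x,\ f(x')\leq y\}$ carries the order-preserving self-map $(x',y)\mapsto(x',f(x'))$, which lies below the identity and whose image is order-isomorphic to $X_{\leq x}$, a poset with a maximum; hence $\pi_1^{-1}(X_{\leq x})$ is contractible and Quillen's fiber theorem (Theorem~\ref{thm:quillen}) gives $|E|\simeq|X|$. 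Since $E$ is precisely the Grothendieck construction of the functor $y\mapsto f^{-1}(Y_{\leq y})$ (with structure maps the inclusions $f^{-1}(Y_{\leq y})\subseteq f^{-1}(Y_{\leq y'})$ for $y\leq y'$), its realization computes $\hocolim_{y\in Y}|f^{-1}(Y_{\leq y})|$, so $|X|\simeq\hocolim_{y\in Y}|f^{-1}(Y_{\leq y})|$. Running the very same argument for the restricted map $f\colon f^{-1}(Y_{<y})\to Y_{<y}$ identifies $\hocolim_{y'<y}|f^{-1}(Y_{\leq y'})|$ with $|f^{-1}(Y_{<y})|$, and under this identification the canonical map from the ``lower'' homotopy colimit into the vertex space $|f^{-1}(Y_{\leq y})|$ becomes exactly the inclusion $|f^{-1}(Y_{<y})|\hookrightarrow|f^{-1}(Y_{\leq y})|$ of the hypothesis --- hence null-homotopic with chosen value $c_y$.

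Now I would invoke the wedge lemma for homotopy colimits over posets (in the form of Welker--Ziegler--\v{Z}ivaljevi\'c; see also Bj\"orner--Wachs--Welker): if $D\colon Y\to\mathrm{Top}$ is a diagram such that for every $y$ the natural map $\hocolim_{y'<y}D\to D(y)$ is homotopic to the constant map at $c_y\in D(y)$, then
\[
\hocolim_Y D \;\simeq\; |Y|\ \vee\ \bigvee_{y\in Y}\bigl(D(y)*|Y_{>y}|\bigr),
\]
the wedge gluing $y\in|Y|$ to $c_y$ in the $D(y)$-face of the join, and the join with the empty space being $D(y)$ itself when $y$ is maximal (matching the convention fixed in Section~\ref{sec:preliminaries}); a sanity check is that $D\equiv \mathrm{pt}$ returns $|Y|$. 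Taking $D(y)=|f^{-1}(Y_{\leq y})|$, the previous paragraph verifies the hypothesis and the conclusion is verbatim the asserted decomposition. The step I expect to cost the most care is this last one: making the homotopy-colimit model of the first paragraph strictly compatible with the maps out of the lower homotopy colimits, and then tracking the points $c_y$ through the chain of equivalences so that the wedge-gluing data comes out as stated; finite-dimensionality of $X$ and $Y$ is used only to keep all the spaces in sight CW and the comparisons well behaved. For finite $Y$ one can alternatively prove the wedge lemma directly by induction on $|Y|$: removing a maximal $y_0$, write $|X|$ as the cofibration pushout $|f^{-1}(Y\setminus\{y_0\})|\hookleftarrow|f^{-1}(Y_{<y_0})|\hookrightarrow|f^{-1}(Y_{\leq y_0})|$, use nullity of the right-hand map to split $|f^{-1}(Y_{\leq y_0})|$ off that pushout as a wedge summand, and reassemble; the only subtlety is reconciling the ``$|Y_{>y}|$'' factors under the induction, which the homotopy-colimit formulation handles cleanly.
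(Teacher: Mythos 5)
The paper does not actually prove Theorem \ref{thm:wedgeDecomposition}: it is quoted from \cite[Proposition 5.1]{PW25}, with the convention for minimal elements referred to \cite[Theorem 2.5]{fibertheorems}. Your outline is the standard argument underlying those references, and its first two steps are correct as written: the Grothendieck construction $E=\{(x,y)\tq f(x)\leq y\}$ satisfies $|E|\simeq |X|$ by Quillen's fiber Theorem \ref{thm:quillen} (your contraction of $\pi_1^{-1}(X_{\leq x})$ is fine), $|E|$ computes $\hocolim_{y\in Y}|f^{-1}(Y_{\leq y})|$ by Thomason's theorem, and running the same argument over $Y_{<y}$ does identify the canonical map $\hocolim_{y'<y}D\to D(y)$ with the inclusion $|f^{-1}(Y_{<y})|\hookrightarrow |f^{-1}(Y_{\leq y})|$ composed with the equivalence $\pi_1$, so the hypothesis transfers exactly as you claim.

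The one place where you lean on more than the literature literally provides is the ``wedge lemma'' you invoke. The Welker--Ziegler--\v{Z}ivaljevi\'c wedge lemma requires the diagram maps $D(y')\to D(y)$ to be genuinely constant; the version you state, assuming only that the maps out of the lower homotopy colimits are homotopic to constants, is not that lemma but is, after the (easy) projection step, essentially the statement being proved. Closing this gap is precisely the coherent diagram-replacement argument --- an induction on the height of $Y$, using finite dimensionality and the homotopy extension property to replace $D$ by an objectwise equivalent diagram with constant structure maps while tracking the points $c_y$ --- and that is the actual content of \cite[Theorem 2.5]{fibertheorems} and \cite[Proposition 5.1]{PW25}. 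So either you supply that induction, or you cite those results, in which case you are citing the theorem itself, which is exactly what the paper does. Your alternative induction for finite $Y$ has the analogous issue: splitting $|f^{-1}(Y_{\leq y_0})|$ off the homotopy pushout is fine, but the remaining piece is $|f^{-1}(Y\setminus\{y_0\})|$ with a cone attached rather than $|f^{-1}(Y\setminus\{y_0\})|$ itself, so the inductive hypothesis does not apply verbatim and the bookkeeping of the $|Y_{>y}|$ factors is again where the real work sits.
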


The statement of Theorem \ref{thm:wedgeDecomposition} on its full generality is given in \cite[Proposition 5.1]{PW25}.
Note that if $y\in Y$ is minimal, $f^{-1}(Y_{< y})$ is empty, and we regard the inclusion of the empty set $\emptyset \hookrightarrow f^{-1}(Y_{\leq y})$ as null-homotopic if $f^{-1}(Y_{\leq y})$ is non-empty (cf. \cite[Theorem 2.5]{fibertheorems}).

For equivariant homotopy types, we will use the Whitehead equivariant theorem.

\begin{theorem}
[Equivariant Whitehead]
\label{thm:whitehead}
Let $G$ be a compact Lie group.
Suppose that $f:X\to Y$ is a $G$-equivariant continuous map between $G$-CW-complexes such that, for all $H\leq G$, the induced map $f_H:X^H\to Y^H$ is a homotopy equivalence.
Then $f$ is a $G$-equivariant homotopy equivalence.
\end{theorem}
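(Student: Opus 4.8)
The plan is to reduce to the non-equivariant Whitehead theorem by an induction over equivariant cells. First I would note that, for a closed subgroup $H\leq G$, the fixed point set of a $G$-CW-complex is again a CW-complex, so the hypothesis that each $f_H\colon X^H\to Y^H$ is a homotopy equivalence is equivalent to saying that each $f_H$ is a weak homotopy equivalence; call a $G$-map with the latter property a \emph{$G$-weak equivalence}. It thus suffices to prove that every $G$-weak equivalence between $G$-CW-complexes is a $G$-homotopy equivalence.

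The technical heart of the argument is the assertion that \emph{if $f\colon X\to Y$ is a $G$-weak equivalence between $G$-CW-complexes, then $f_*\colon[W,X]_G\to[W,Y]_G$ is surjective for every $G$-CW-complex $W$}, where $[-,-]_G$ denotes $G$-homotopy classes of $G$-maps (surjectivity is all that will be needed). To establish it I would fix a $G$-map $\gamma\colon W\to Y$ and build a $G$-map $\alpha\colon W\to X$ with $f\circ\alpha\simeq_G\gamma$ by transfinite induction over the equivariant cells of $W$ in a chosen well-ordering, carrying along at each stage a $G$-homotopy between $f\circ\alpha$ and $\gamma$ on the subcomplex constructed so far. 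Extending over the next cell $G/H\times D^n$, with $G$ acting trivially on the $D^n$-factor, becomes — via the orbit--cell adjunction $\mathrm{Map}_G(G/H\times Z,Z')\cong\mathrm{Map}(Z,(Z')^H)$ — the non-equivariant problem of compressing a map of pairs $(D^n,S^{n-1})\to(Y^H,X^H)$ into $X^H$ relative to $S^{n-1}$; its obstruction vanishes because $f_H$ is a weak equivalence (for $n\geq 2$ it lies in the trivial group $\pi_n(Y^H,X^H)$, and $n=0,1$ are handled by bijectivity of $\pi_0(f_H)$ and surjectivity of $\pi_1(f_H)$). At limit stages one takes the colimit of the $G$-maps and $G$-homotopies built so far, which is unproblematic since the subcomplexes form a directed system of $G$-cofibrations.

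Granting this, the theorem follows formally. Surjectivity of $f_*\colon[Y,X]_G\to[Y,Y]_G$ applied to $[\mathrm{id}_Y]$ produces a $G$-map $g\colon Y\to X$ with $f\circ g\simeq_G\mathrm{id}_Y$. Restricting a $G$-homotopy to $H$-fixed points shows $\pi_*(f_H)\circ\pi_*(g_H)=\mathrm{id}$; since $\pi_*(f_H)$ is an isomorphism, so is $\pi_*(g_H)$, and hence $g$ is again a $G$-weak equivalence. Applying surjectivity of $g_*\colon[X,Y]_G\to[X,X]_G$ to $[\mathrm{id}_X]$ produces $h\colon X\to Y$ with $g\circ h\simeq_G\mathrm{id}_X$. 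Then $f\simeq_G f\circ g\circ h\simeq_G h$, so $g\circ f\simeq_G g\circ h\simeq_G\mathrm{id}_X$ while $f\circ g\simeq_G\mathrm{id}_Y$; thus $g$ is a two-sided $G$-homotopy inverse of $f$.

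The step I expect to be the main obstacle is the bookkeeping in the transfinite construction of $\alpha$: one must carry along, compatibly, both the partially defined $G$-map and the $G$-homotopy witnessing $f\alpha\simeq_G\gamma$, keep everything strictly $G$-equivariant rather than merely fixed-point-wise, and arrange the colimits at limit stages so that the data assembled at successive stages agree on overlaps. The orbit--cell adjunction is exactly what turns each individual extension into an elementary non-equivariant problem and pins its obstruction down to the vanishing group $\pi_n(Y^H,X^H)$. Equivalently, one can package the whole argument as equivariant obstruction theory in Bredon cohomology, where the obstruction groups $H^n_G\big(W;\underline{\pi}_{n-1}\big)$ vanish because the coefficient system $H\mapsto\pi_{n-1}(X^H)$ maps isomorphically onto $H\mapsto\pi_{n-1}(Y^H)$.
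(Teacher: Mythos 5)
The paper itself does not prove this statement: its ``proof'' is just a citation to Matumoto and to Greenlees--May, where the theorem is established by exactly the argument you sketch — induction over equivariant cells $G/H\times D^n$ using the adjunction $\mathrm{Map}_G(G/H\times Z,Z')\cong\mathrm{Map}(Z,(Z')^H)$, with the inductive step packaged as the equivariant HELP lemma or, equivalently, as obstruction theory in Bredon cohomology, followed by the formal upgrade of a one-sided $G$-homotopy inverse to a two-sided one. So your outline is correct and coincides with the standard proof in the cited sources; it simply supplies what the paper delegates to the literature. Two points to tighten if you write it out in full. First, the relative groups $\pi_n(Y^H,X^H)$ only literally make sense after replacing $f$ (equivariantly) by the inclusion of $X$ into its mapping cylinder, or by phrasing the compression step as HELP, since $f_H$ need not be an inclusion; the vanishing statement you want is for the relative homotopy groups of that pair at every basepoint. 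Second, for a compact Lie group the hypothesis should be read over closed subgroups $H$ (fixed points of an arbitrary subgroup agree with those of its closure); closedness is also what makes $(G/K)^H$ a closed smooth submanifold, so that $X^H$ has the homotopy type of a CW complex — although, as you observe, that direction of the comparison between homotopy equivalences and weak equivalences is not actually needed, since the proof only uses that each $f_H$ is a weak equivalence. The concluding formal argument (obtain $g$ with $f\circ g\simeq_G\mathrm{id}_Y$, check $g$ is again a $G$-weak equivalence, obtain a right inverse for $g$, and conclude $g$ is a two-sided inverse for $f$) is the standard one and is correct as stated.
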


\begin{proof}
See \cite{GreenleesMay, Matumoto}.
\end{proof}

We are not aware of an equivariant version of Theorem \ref{thm:wedgeDecomposition}.
Such a result would require subtle assumptions on the choice of gluing points. 
Nevertheless, we can use Theorem 9.1 from \cite{fibertheorems} in the context of finite-dimensional posets and finite groups to yield a homology-equivariant version of Theorem \ref{thm:wedgeDecomposition}.

\begin{theorem}
\label{thm:homologyWedge}
Suppose that $G$ is a finite group, and $f:X\to Y$ is a $G$-equivariant order-preserving map between finite-dimensional $G$-posets.
Let $R$ be a ring such that $RG$ is semisimple.
Then, under the conditions of Theorem \ref{thm:wedgeDecomposition}, for all $m\geq 0$ we have an isomorphism of $RG$-modules:
\[ \widetilde{H}_{m}(X, R) \cong_G \widetilde{Y}_m(Y, R) \oplus \bigoplus_{\overline{y} \in Y/G} \bigoplus_{i+j=m-1} \Ind_{\Stab_G(y)}^G \big( \widetilde{H}_i(f^{-1}(Y_{\leq y})) \otimes \widetilde{H}_j(Y_{> y})   \big). \]
\end{theorem}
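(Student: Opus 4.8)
The plan is to read the formula off as the $RG$-linear refinement of the homotopy decomposition in Theorem~\ref{thm:wedgeDecomposition}. Non-equivariantly it is almost immediate. That theorem gives $X \simeq Y \vee \bigvee_{y\in Y} f^{-1}(Y_{\leq y}) * Y_{>y}$, hence
\[
\widetilde H_m(X,R)\;\cong\;\widetilde H_m(Y,R)\;\oplus\;\bigoplus_{y\in Y}\widetilde H_m\big(f^{-1}(Y_{\leq y}) * Y_{>y}\big).
\]
Since $f$ is $G$-equivariant, $g\cdot f^{-1}(Y_{\leq y}) = f^{-1}(Y_{\leq gy})$ and $g\cdot Y_{>y}=Y_{>gy}$, so $G$ permutes the wedge summands exactly as it permutes $Y$; grouping them over $G$-orbits rewrites $\bigoplus_{y\in Y}$ as $\bigoplus_{\overline y\in Y/G}\Ind_{\Stab_G(y)}^G\widetilde H_m\big(f^{-1}(Y_{\leq y}) * Y_{>y}\big)$, and the K\"unneth theorem for the join --- with no $\mathrm{Tor}$ terms, since $R$ is a quotient ring of the semisimple ring $RG$ via the augmentation and is therefore semisimple, so all $R$-modules are flat --- turns each summand into $\bigoplus_{i+j=m-1}\widetilde H_i(f^{-1}(Y_{\leq y}))\otimes_R\widetilde H_j(Y_{>y})$. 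This already proves the claimed $R$-module isomorphism; the whole difficulty is that the homotopy equivalence of Theorem~\ref{thm:wedgeDecomposition} is \emph{not} $G$-equivariant --- the gluing points $c_y$ need not form a $G$-invariant family --- so the $RG$-module structure does not come for free.

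To recover it I would invoke \cite[Theorem~9.1]{fibertheorems}. Specialized to the finite group $G$ and the finite-dimensional $G$-posets $X,Y$, and applied to the $G$-equivariant order-preserving map $f$, it produces a convergent spectral sequence of $RG$-modules, built $G$-equivariantly at the chain level from the filtration of $X$ by the $G$-invariant subposets $f^{-1}(Y^{(s)})$, whose first page is assembled orbit by orbit over $Y/G$ from the homology of the fibers $f^{-1}(Y_{\leq y})$ relative to $f^{-1}(Y_{<y})$, induced up from $\Stab_G(y)$, together with the homology of $Y$. The key point is that this is an equivariant construction at the level of chain complexes, so the $RG$-module structure is present at every page and the non-equivariance of the $c_y$ is irrelevant.

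Next I would feed in the hypothesis of Theorem~\ref{thm:wedgeDecomposition}, namely that each inclusion $f^{-1}(Y_{<y})\hookrightarrow f^{-1}(Y_{\leq y})$ is null-homotopic. This is precisely the condition under which, non-equivariantly, the spectral sequence degenerates and the $y$-th strand of its $E^\infty$-page becomes $\widetilde H_*\big(f^{-1}(Y_{\leq y}) * Y_{>y}\big)$ (here one uses that $|Y_{\geq y}|$ is the cone on $|Y_{>y}|$); the same computation applies verbatim in the equivariant setting because all the maps involved are $\Stab_G(y)$-equivariant. Once the spectral sequence has collapsed, semisimplicity of $RG$ lets Maschke's theorem split every $RG$-module extension in the abutment, so $\widetilde H_m(X,R)$ is the direct sum of the $E^\infty = E^1$ contributions in total degree $m$. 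Identifying these --- $\widetilde H_m(Y,R)$ from the base, and $\Ind_{\Stab_G(y)}^G\widetilde H_m\big(f^{-1}(Y_{\leq y}) * Y_{>y}\big)$ for each orbit $\overline y\in Y/G$ --- and rewriting the join homology by K\"unneth as in the first paragraph gives the claimed isomorphism.

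The main obstacle is the passage from an $R$-module splitting to an $RG$-module splitting: the hypotheses of Theorem~\ref{thm:wedgeDecomposition} are what force the spectral sequence to collapse, and semisimplicity of $RG$ is what trivializes its extension problems, but the argument only works because \cite[Theorem~9.1]{fibertheorems} supplies the filtration --- and hence the spectral sequence --- $G$-equivariantly. The remaining work --- the orbit bookkeeping that produces the induced modules, and the K\"unneth rewriting of the join summands --- is routine.
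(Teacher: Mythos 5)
Your proposal is correct and takes essentially the same route as the paper: the paper gives no independent argument, but simply observes that Theorem 9.1 of \cite{fibertheorems} (the Bj\"orner--Wachs--Welker equivariant homology fiber theorem), applied to the $G$-equivariant map $f$ under the hypotheses of Theorem \ref{thm:wedgeDecomposition} (null-homotopy of $f^{-1}(Y_{<y})\hookrightarrow f^{-1}(Y_{\leq y})$ implies the homology-triviality that result requires) together with semisimplicity of $RG$, yields exactly the stated $RG$-module decomposition. The only difference is cosmetic: you portray the citation as supplying an equivariant spectral sequence that you then degenerate and split via Maschke and K\"unneth, whereas the cited theorem already delivers the final orbit-indexed, induced-module formula, so that extra scaffolding (harmless, and essentially a sketch of its proof) is not needed.
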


Here we are denoting by $\overline{y}$ the image of an element $y\in Y$ in the orbit poset $Y/G$.

%% Preliminaries buildings
\subsection{Buildings}
We work with spherical buildings in the sense of \cite{AB}, so we do not assume that our buildings are thick.
From now on, $\Delta$ will denote a (spherical) building.
By the Solomon-Tits theorem, $\Delta$ is spherical in the sense that it has the homotopy type of a wedge of spheres of dimension $\dim \Delta$ \cite[Theorem 4.73]{AB}.
On the other hand, for every simplex $\sigma\in \Delta$, its link $\Lk_{\Delta}(\sigma)$ is a building of dimension $\codim_{\Delta}(\sigma)$ \cite[Proposition 4.9]{AB}.
In particular, buildings are Cohen-Macaulay.

We write $\A(\Delta)$ for the complete system of apartments of $\Delta$.
From now on, when we speak of an apartment of $\Delta$ we mean an
element of $\A(\Delta)$. 
If $\sigma_1,\ldots,\sigma_r\in \Delta$ are simplices, we denote by $\A(\Delta,\sigma_1,\ldots,\sigma_r)$ the set of apartments $\Sigma\in \A(\Delta)$ such that $\sigma_i\in \Sigma$ for all $i$.
Recall that, since we work with spherical buildings, apartments are finite Coxeter complexes.

%% Opposition
\subsubsection{Opposition}
\label{sub:opposition}
Let $\Sigma$ be a (finite) Coxeter complex.
Two chambers of $\Sigma$ are called opposite if their distance coincides with the diameter of $\Sigma$.
It is well known that every chamber of $\Sigma$ has a unique opposite, giving rise to an involutory bijection $\cham \Sigma\to \cham \Sigma$ that extends uniquely to an involutory automorphism $\op_{\Sigma}:\Sigma\to\Sigma$ \cite[2.39]{Tits1}.
Thus, two simplices $\sigma,\sigma' \in \Sigma$ are opposite if $\op_{\Sigma}(\sigma) = \sigma'$.
By convention, the opposite of the empty simplex is the empty simplex.

Two simplices of $\Delta$ are called opposite if they are opposite in some apartment (and hence in every apartment that contains both of them).
It follows that two opposite chambers lie in a unique apartment (see \cite[Lemma 4.69]{AB} and \cite[3.25]{Tits1}).
Note that a simplex may have multiple opposites in $\Delta$.

There is a bijection between apartments containing two given opposite simplices and apartments in the link of one of these simplices:

\begin{lemma}
\label{lm:bijectionApartmentsLinkOpposite}
Let $\Delta$ be a spherical building, and let $\sigma,\sigma'$ be two opposite simplices.
Then we have a bijection:
\[ \Sigma\in \A(\Delta, \sigma,\sigma') \longmapsto \Lk_{\Sigma}(\sigma)\in \A(\Lk_{\Delta}(\sigma)).\]
The inverse of this map is given as follows.
If $\widetilde{\Sigma}\in \A(\Lk_{\Delta}(\sigma))$ and $c,c'\in \widetilde{\Sigma}$ are two opposite chambers there, then $\widetilde{\Sigma} = \Lk_{\Sigma}(\sigma)$ where $\Sigma$ is the convex hull in $\Delta$ of the opposite chambers $c$ and $\proj_{\sigma'}(c')$. Thus we map $\widetilde{\Sigma}$ to $\Sigma$.
\end{lemma}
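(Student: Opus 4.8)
The plan is to establish the two maps as mutually inverse bijections by a careful tracking of the opposition relation between $\Delta$ and the link $\Lk_\Delta(\sigma)$. The key structural fact I would use repeatedly is that two opposite chambers of a spherical building lie in a unique apartment (cited in the excerpt from \cite[Lemma 4.69]{AB}), so that convex hulls of pairs of opposite chambers are apartments, and conversely every apartment is recovered as such a convex hull.

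First I would check that the forward map is well-defined: if $\Sigma \in \A(\Delta,\sigma,\sigma')$ then $\sigma$ and $\sigma'$ are opposite in $\Sigma$, and $\Lk_\Sigma(\sigma)$ is a Coxeter complex of the same type as $\Lk_\Delta(\sigma)$ sitting inside it, hence an apartment of the link. The genuinely substantive step is showing that the displayed inverse construction lands in $\A(\Delta,\sigma,\sigma')$ and is a two-sided inverse. Given $\widetilde\Sigma \in \A(\Lk_\Delta(\sigma))$ and opposite chambers $c,c' \in \widetilde\Sigma$ (i.e.\ opposite in the link, so they correspond to chambers of $\Delta$ containing $\sigma$), I would argue that $c$ and $\proj_{\sigma'}(c')$ are opposite \emph{chambers of $\Delta$}: here one uses the interplay between projections (the map $\proj_{\sigma'}$ sends a chamber containing $\sigma$ to a chamber containing $\sigma'$) and the fact that $\sigma,\sigma'$ are opposite, together with the standard gallery-distance computation showing $d(c,\proj_{\sigma'}(c')) = \diam\Delta$ exactly when $c,c'$ are opposite modulo $\sigma$. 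Then $\Sigma := \Conv_\Delta(c,\proj_{\sigma'}(c'))$ is the unique apartment containing those two opposite chambers; it contains $\sigma$ (a face of $c$) and $\sigma'$ (a face of $\proj_{\sigma'}(c')$), so $\Sigma \in \A(\Delta,\sigma,\sigma')$.

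To finish I would verify both round-trips. For $\widetilde\Sigma \mapsto \Sigma \mapsto \Lk_\Sigma(\sigma)$: the chambers $c$ and $\proj_{\sigma'}(c')$ both contain $\sigma$ and are opposite in $\Sigma$, hence opposite in $\Lk_\Sigma(\sigma)$, and $\Lk_\Sigma(\sigma)$ is an apartment of the link containing these two opposite chambers; since $c,c'$ were opposite in $\widetilde\Sigma$ and $\proj_{\sigma'}(c')$ equals $c'$ in the link (as $\proj_{\sigma'}$ composed with passage to the link is the opposition-transport that fixes the residue of $\sigma$ — this needs a sentence of justification), uniqueness of the apartment through two opposite chambers in $\Lk_\Delta(\sigma)$ forces $\Lk_\Sigma(\sigma) = \widetilde\Sigma$. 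For $\Sigma \mapsto \Lk_\Sigma(\sigma) \mapsto \Sigma$: pick any pair of opposite chambers $c, c''$ of $\Lk_\Sigma(\sigma)$; running the construction gives $\Conv_\Delta(c, \proj_{\sigma'}(c''))$, which is an apartment contained in $\Sigma$ of full dimension, hence equals $\Sigma$. I expect the main obstacle to be the precise compatibility between the projection operator $\proj_{\sigma'}$ and the opposition involution of the link — specifically pinning down that $c' \in \widetilde\Sigma$ corresponds, under the identification of residues, to $\proj_\sigma(\proj_{\sigma'}(c'))$ and that this is where the hypothesis ``$\sigma,\sigma'$ opposite'' is essential (so that $\proj_\sigma \circ \proj_{\sigma'}$ restricted to chambers through $\sigma$ is the identity). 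The rest is routine bookkeeping with galleries, convex hulls, and the uniqueness statement for apartments through opposite chambers.
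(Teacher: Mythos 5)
The paper does not give an argument for this lemma at all: its proof is the single line ``This is part of Proposition 2.1 in \cite{vH}.'' Your sketch is therefore, in effect, a reproof of the cited result, and it follows the same standard route as von Heydebreck's argument: reduce everything to (i) the opposition criterion for chambers across opposite simplices --- for $c\supseteq \sigma$ and $d\supseteq\sigma'$, $c$ and $d$ are opposite in $\Delta$ if and only if $c$ and $\proj_{\sigma}(d)$ are opposite in $\Lk_{\Delta}(\sigma)$, equivalently $d(c,\proj_{\sigma'}(c'))=\diam\Delta$ exactly when $c,c'$ are opposite in the link --- together with (ii) the fact that $\proj_{\sigma}\circ\proj_{\sigma'}$ is the identity on chambers containing $\sigma$ when $\sigma,\sigma'$ are opposite, (iii) uniqueness of the apartment through two opposite chambers, and (iv) convexity of apartments (so that projections of their simplices stay inside them). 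You identify all four ingredients correctly, and the two round-trip verifications you outline do assemble them into a complete proof. Be aware, though, that (i) and (ii) are precisely the substance of \cite[Prop.\ 2.1]{vH} (or the corresponding material in \cite{AB}); deferring them to ``a standard gallery-distance computation'' means your write-up, as it stands, carries no more proof content than the paper's citation --- to be self-contained you would need to carry out the gate-property computation $d(c,d)=d(c,\proj_{\sigma'}(c))+d(\proj_{\sigma'}(c),d)$ and the resulting characterization of maximal distance.

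One step is wrong as written. In the round trip $\widetilde\Sigma\mapsto\Sigma\mapsto\Lk_{\Sigma}(\sigma)$ you say ``the chambers $c$ and $\proj_{\sigma'}(c')$ both contain $\sigma$ and are opposite in $\Sigma$, hence opposite in $\Lk_{\Sigma}(\sigma)$.'' But $\proj_{\sigma'}(c')$ contains $\sigma'$, not $\sigma$ (two opposite chambers of $\Sigma$ can never both contain the nonempty simplex $\sigma$), so it is not a simplex of $\Lk_{\Delta}(\sigma)$ and this sentence does not parse. The correct statement, which you in fact anticipate in your closing remarks, is: $c'=\proj_{\sigma}\bigl(\proj_{\sigma'}(c')\bigr)$ by (ii), and $\proj_{\sigma}\bigl(\proj_{\sigma'}(c')\bigr)\in\Sigma$ because $\sigma$ and $\proj_{\sigma'}(c')$ lie in the convex subcomplex $\Sigma$; hence $c$ and $c'$ are two opposite chambers of $\Lk_{\Delta}(\sigma)$ lying in both apartments $\Lk_{\Sigma}(\sigma)$ and $\widetilde\Sigma$ of the link, and uniqueness of the apartment through a pair of opposite chambers forces $\Lk_{\Sigma}(\sigma)=\widetilde\Sigma$. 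With that local repair (and the same convexity remark inserted where you claim $\Conv_{\Delta}(c,\proj_{\sigma'}(c''))\subseteq\Sigma$ in the other round trip), your plan is sound.
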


\begin{proof}
This is part of Proposition 2.1 in \cite{vH}.
\end{proof}

Here, $\proj_{\sigma}(\tau)$ denotes the projection of a simplex $\tau$ to $\sigma$, in the sense of Tits \cite[2.30]{Tits1} (see next subsection for the definition).

%% Convexity
\subsubsection{Convexity}
\label{sub:convexity}
Now we look at convex subcomplexes of $\Delta$, in the sense we defined above.
Our notion of convexity coincides with Tits' definition \cite[1.5]{Tits1}, and it is stronger than the notion given in \cite[Definition 4.120]{AB}.
See also \cite[Remark 4.122]{AB}.
Moreover, we will mostly work with convex subcomplexes of $\Delta$ that are contained in some apartment (from the complete system of apartments).

Suppose first that $\Sigma$ is a Coxeter complex.
A root of $\Sigma$ is the image of a folding \cite[1.8]{Tits1}.
If $\alpha$ is a root of $\Sigma$, we write $-\alpha$ for its opposite root, and $\partial \alpha =\alpha \cap (-\alpha)$ is (by definition) a wall (see discussion at the top of page 11 in \cite{Tits1}).
Note that $-\alpha = \op_{\Sigma}(\alpha)$.
Write $\roots(\Sigma)$ for the set of roots of $\Sigma$.
Recall that a panel is a codimension-one simplex.
Given a chamber $\sigma\in \Sigma$ and a panel $\tau\subseteq \sigma$, there exists a unique root $\alpha\in \roots(\Sigma)$ such that $\tau\in \partial \alpha$ and $\sigma\in \alpha$.
Every root (and hence every wall) is a convex subcomplex of $\Sigma$.
Indeed, by \cite[2.19]{Tits1}, convex subcomplexes of $\Sigma$ are exactly those obtained as intersections of roots.
If $K$ is a subcomplex of $\Sigma$, we write $\roots_{\Sigma}(K)$ for the set of roots $\alpha\in \roots(\Sigma)$ that contain $K$.
Thus, $K$ is convex if and only if
\[ K = \bigcap_{\alpha\in \roots_{\Sigma}(K)} \alpha.\]

The convex hull of two opposite simplices $\sigma,\sigma'\in \Sigma$ is termed a Levi sphere of the Coxeter complex $\Sigma$.
Indeed, Levi spheres are exactly the subcomplexes of $\Sigma$ obtained as intersections of walls.
This terminology is
borrowed from the work in \cite{Serre} of Serre, who defines a Levi sphere as an intersection of $|\Sigma|$, viewed as the classical Euclidean sphere, with sets of reflecting hyperplanes of the underlying Coxeter group.
This concept generalizes the notion of Levi subgroups in algebraic groups to the context of buildings, as we will explain later in Lemma \ref{lm:LeviSubgroupsAndLeviSpheres}.

Now we go back to $\Delta$.
Let $S$ be a set of simplices of $\Delta$.
If $S \subseteq \Sigma$ for some apartment $\Sigma$, then the convex hull of $S$ is the same whether it is taken in $\Sigma$ or in $\Delta$, that is, $\Conv_{\Sigma}(S) = \Conv_{\Delta}(S)$.
If $\sigma,\tau\in \Delta$ are two simplices, then they lie in some apartment $\Sigma$.
The projection of $\tau$ onto $\sigma$, denoted by $\proj_{\sigma}(\tau)$ is the unique maximal simplex containing $\sigma$ in the convex hull $\Conv_{\Delta}(\sigma,\tau)=\Conv_{\Sigma}(\sigma,\tau)$ (see \cite[2.30, 3.19]{Tits1}).

A Levi sphere of $\Delta$ is the convex hull of two opposite simplices, which is then a Levi sphere of any apartment containing these simplices.
If $\sigma,\sigma'$ are opposite chambers, then $\Conv_{\Delta}(\sigma,\sigma')$ is the unique apartment that contains them and hence a $\dim \Delta$-sphere.
If $\sigma,\sigma'$ are opposite vertices, then $\Conv_{\Delta}(\sigma,\sigma') = \{\sigma,\sigma'\}$ is a $0$-sphere.

More generally, Levi spheres are always spheres (see also Remarques 2 
on the bottom of page 200 in \cite{Serre}).

\begin{lemma}
\label{lm:convexIsleviSphere}
Let $\Sigma$ be a finite Coxeter complex, and let $K\subseteq \Sigma$ be a convex subcomplex.
Then $K$ is a Levi sphere if and only if $K$ contains a pair of opposite simplices $\sigma_1,\sigma_2$ with dimension $\dim \sigma_i = \dim K$.
In such a case, $K$ is the convex hull of $\sigma_1,\sigma_2$ and $|K|$ is a sphere of dimension $\dim K$.
\end{lemma}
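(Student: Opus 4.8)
The plan is to argue geometrically, realizing $\Sigma$ via the reflection representation. Let $W$ be the underlying finite Coxeter group, acting on $V=\mathbb{R}^n$ ($n$ the rank) by its reflection representation; let $\mathcal H$ be the resulting finite central arrangement of reflecting hyperplanes, and $S\subseteq V$ the unit sphere. Then $|\Sigma|=S$, tiled by the closed faces of the fan of $\mathcal H$, and the simplices of $\Sigma$ are the faces of this tiling, the chambers being the top faces. For a simplex $\sigma$ write $C_\sigma\subseteq V$ for the corresponding closed cone, so that its linear span $\ell_\sigma:=\langle C_\sigma\rangle$ has dimension $\dim\sigma+1$, and put $\hat\sigma:=\ell_\sigma\cap S$, a great subsphere of dimension $\dim\sigma$. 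Since $\ell_\sigma=\bigcap\{H\in\mathcal H : C_\sigma\subseteq H\}$, the subcomplex $\hat\sigma$ is an intersection of walls, hence convex, and it is centrally symmetric. Finally, recall that geometrically a root is the closed hemisphere on one side of a wall, so by \cite[2.19]{Tits1} every convex subcomplex $L$ of $\Sigma$ has the form $L=P\cap S$ for a polyhedral cone $P$ obtained as an intersection of closed half-spaces bounded by members of $\mathcal H$; such $P$ is in particular a convex cone, and $\dim L=\dim P-1$.

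The first step is to identify $\op_\Sigma$ with the antipodal map $a\colon v\mapsto -v$. As every $H\in\mathcal H$ is linear, $a$ preserves $\mathcal H$ and hence induces a simplicial automorphism of $\Sigma$; moreover every $H\in\mathcal H$ separates a given chamber $C$ from $a(C)=-C$, so the gallery distance from $C$ to $a(C)$ equals $|\mathcal H|=\diam\Sigma$, i.e.\ $a(C)$ is the chamber opposite to $C$. By the uniqueness of the automorphism extending the opposition of chambers, $a=\op_\Sigma$. Consequently two simplices $\sigma_1,\sigma_2$ of $\Sigma$ are opposite if and only if $C_{\sigma_2}=-C_{\sigma_1}$; in that case $\dim\sigma_1=\dim\sigma_2$, $\ell_{\sigma_1}=\ell_{\sigma_2}$, $\hat{\sigma_1}=\hat{\sigma_2}$, and $\sigma_2\in\hat{\sigma_1}$.

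The technical core is the following claim: if $L$ is a convex subcomplex of $\Sigma$ containing a simplex $\sigma$ together with its opposite $\op_\Sigma(\sigma)=-\sigma$, and $\dim L=\dim\sigma$, then $L=\hat\sigma$. Indeed, writing $L=P\cap S$ as above, the convex cone $P$ contains $C_\sigma$ and $-C_\sigma$, hence contains the difference set $C_\sigma-C_\sigma=\ell_\sigma$; therefore $\dim P\ge\dim\sigma+1=\dim L+1=\dim P$, so $P$ is a convex cone whose span has dimension $\dim\ell_\sigma$ and which contains the subspace $\ell_\sigma$, forcing $P=\ell_\sigma$ and $L=\ell_\sigma\cap S=\hat\sigma$.

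Granting the claim, the lemma follows. If $K$ is convex and contains opposite simplices $\sigma_1,\sigma_2$ with $\dim\sigma_i=\dim K$, then $\sigma_2=-\sigma_1$, so the claim applied to $L=K$ gives $K=\hat{\sigma_1}$, a great subsphere of dimension $\dim\sigma_1=\dim K$, whence $|K|$ is a sphere of dimension $\dim K$. Applying the claim also to $L=\Conv_\Sigma(\sigma_1,\sigma_2)$ — which contains $\sigma_1$ and $-\sigma_1$ and satisfies $\sigma_1\subseteq\Conv_\Sigma(\sigma_1,\sigma_2)\subseteq\hat{\sigma_1}$, hence has dimension $\dim\sigma_1$ — yields $\Conv_\Sigma(\sigma_1,\sigma_2)=\hat{\sigma_1}=K$, so $K$ is the convex hull of the opposite pair $\sigma_1,\sigma_2$, in particular a Levi sphere. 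Conversely, if $K$ is a Levi sphere, say $K=\Conv_\Sigma(\tau_1,\tau_2)$ with $\tau_1,\tau_2$ opposite, then $\tau_2=-\tau_1$ and $\tau_1\subseteq K\subseteq\hat{\tau_1}$ force $\dim K=\dim\tau_1=\dim\tau_2$, so $K$ contains the opposite pair $\tau_1,\tau_2$ in the required top dimension. The step I expect to require the most care is the dictionary between the combinatorics of opposition and the geometry — the identification $\op_\Sigma=a$ and the description of convex subcomplexes as spherical polyhedral cones $P\cap S$ — after which the remaining argument is elementary convex geometry.
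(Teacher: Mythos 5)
Your proof is correct, but it takes a genuinely different route from the paper. The paper's argument is building-theoretic and very short: it invokes Serre's Th\'eor\`eme 2.1 on complete reducibility (\cite{Serre}) to conclude that a convex subcomplex containing a top-dimensional opposite pair is completely reducible, and then uses the fact that inside a single apartment every point has at most one opposite to force $K$ to be exactly the Levi sphere spanned by $\sigma_1,\sigma_2$; the converse is dismissed as clear. You instead work in the reflection representation: you identify $\op_{\Sigma}$ with the antipodal map (something the paper already records, in metric language, in its discussion of the CAT(1) structure, where $\op_\Sigma$ corresponds to $-\id$ on the unit sphere), describe convex subcomplexes via Tits' characterization as $P\cap S$ for $P$ an intersection of closed half-spaces of the reflection arrangement, and then reduce everything to the elementary observation that a convex cone containing $C_\sigma$ and $-C_\sigma$ contains $C_\sigma-C_\sigma=\Span{C_\sigma}$, which together with the dimension count $\dim L=\dim P-1$ pins down $P$ as the flat spanned by $C_\sigma$. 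What your approach buys is a self-contained, Serre-free argument that also spells out the converse direction and the equality $K=\Conv_\Sigma(\sigma_1,\sigma_2)$ explicitly; what it costs is that it leans on standard-but-unproved facts about the geometric realization (that $|\Sigma|$ is the arrangement triangulation of the unit sphere, that roots realize as closed hemispheres, and that $\dim L=\dim P-1$ for such $P$ --- the last needs the small covering/measure argument that a $d$-dimensional convex cone cannot be a finite union of lower-dimensional faces), whereas the paper's proof stays entirely at the level of buildings and outsources the substance to \cite{Serre}. None of these asserted facts is problematic, so I see no genuine gap.
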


\begin{proof}
Suppose that $K$ contains a pair of opposite simplices $\sigma_1,\sigma_2$ of dimension $\dim K$.
Then, in the terminology of \cite[Théorème 2.1]{Serre}, $K$ must be completely reducible. This means that every point of $K$ has an opposite.
But every point of $K$ has at most one opposite as $K$ lies in the sphere $\Sigma$, so $K$ must be exactly the Levi sphere spanned by $\sigma_1$ and $\sigma_2$.
In particular, $K$ is the convex hull of $\sigma_1,\sigma_2$, and it triangulates a sphere of dimension $\dim K$.

The converse of this statement is clear (see \cite{Serre}).
\end{proof}

A root (resp. a wall) of $\Delta$ is a root (resp. a wall) of some apartment.
We write $\roots(\Delta)$ for the set of roots of $\Delta$, so
\[ \roots(\Delta) = \bigcup_{\Sigma \in \A(\Delta)} \roots(\Sigma).\]
Similarly, if $K\subseteq \Delta$, $\roots_{\Delta}(K)$ denotes the set of roots $\alpha\in \roots(\Delta)$ that contain $K$.
If $K$ is a convex subcomplex of $\Delta$ that is contained in some apartment, then $K$ is a convex subcomplex of any apartment containing it.
In particular, if $K\subseteq \Sigma\in \A(\Delta)$, then 
$K = \bigcap_{\alpha \in \roots_{\Sigma}(K)} \alpha$, and more generally,
\[ K  =\bigcap_{\alpha\in \roots_{\Delta}(K)} \alpha.\]

\subsubsection{The CAT(1) metric}
\label{subsub:cat1metric}
The geometric realization of a spherical building $\Delta$ admits a canonical metric $\dcat$ that makes it a complete CAT(1) space.
See \cite[II.10 Theorem 10A.4]{BH} and \cite[Example 12.39]{AB}.
With this metric, an apartment becomes isometric to the unit sphere $\SS^d$ of the vector space on which the underlying reflection group acts, where  $d = \dim \Delta$.
In particular, the opposition map $\op_{\Sigma}:\Sigma\to\Sigma$ of an apartment $\Sigma$ gives rise to the involution $-\id_{\SS^d}:\SS^d\to \SS^d$ of the unit sphere.
Hence, two simplices $\sigma,\sigma'\in \Sigma$ are opposite if and only if their barycenters are opposite points when regarded in $\SS^d$ via this identification.

The diameter of $|\Delta|$, which is $\pi$ with this metric, is also the diameter of any apartment, which equals the distance between two opposite points.
In particular, for two points $x,y\in |\Delta|$ at distance $\dcat(x,y) < \pi$, there exists a unique geodesic from $x$ to $y$.
Recall that a subspace $X$ of a CAT(1) space is convex if for every two points $x,y\in X$ at distance $<\pi$, the unique geodesic segment joining $x,y$ is completely contained in $X$.
It follows that a convex subcomplex $K$ of $\Delta$ gives rise to a convex subspace of $|\Delta|$.
Notice that not every convex subspace of $|\Delta|$ arises in this way.
Now, if $\sigma,\sigma'$ are two non-opposite simplices in $\Delta$, then there is a unique geodesic (in the geometric realization) that joins their barycenters.

%% Automorphism groups
\subsubsection{Automorphism group}
\label{subsub:autDelta}
We denote by $\Aut(\Delta)$ the group of simplicial automorphisms of $\Delta$.
These automorphisms might not be type-preserving.
By a group acting on $\Delta$, we mean a group inducing simplicial automorphisms on $\Delta$.

Any simplicial automorphism on $\Delta$ gives rise to an isometry of $|\Delta|$ with the metric $\dcat$.
Therefore, if $H$ is a group acting simplicially on $\Delta$, and $x,y\in |\Delta|^H$ are two points at distance $<\pi$, then $H$ must fix the unique geodesic joining $x$ and $y$.
In particular, if $\sigma,\sigma'$ are non-opposite simplices of $\Delta$ that are invariant under the action of $H$, then $H$ fixes the unique geodesic in $|\Delta|$ that joins their barycenters.

From these observations, we get the following lemma.

\begin{lemma}
\label{lm:contractibleIntersectionApartments}
Let $\Delta$ be a spherical building, and let $H$ be a group acting on $\Delta$ by simplicial automorphisms.
Let $\tau\in \Delta^H$ be a non-empty simplex fixed by $H$, and let $\S \subseteq \A(\Delta,\tau)$ be a set of apartments containing $\tau$.

If $\bigcap_{\Sigma\in \S} (\Sigma^H)$ does not contain an opposite of $\tau$, then it is contractible.
\end{lemma}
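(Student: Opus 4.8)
The plan is to contract $P:=\bigcap_{\Sigma\in\S}\Sigma^{H}$ along geodesics to the barycenter $b_{\tau}$ of $\tau$ inside the $\CAT(1)$ realization of $\Delta$, in the spirit of the observations preceding the statement. First I would fix an apartment $\Sigma\in\S$ and set $K:=\bigcap_{\Sigma'\in\S}\Sigma'$; this is a convex subcomplex of $\Delta$ (an intersection of apartments, hence of convex chamber subcomplexes) contained in $\Sigma$, and $\tau\in K$ because $\tau$ belongs to every member of $\S$. Unwinding the definitions, $P$ is precisely the set $K^{H}$ of simplices of $K$ that are stabilized by $H$, and since $H$ permutes the open cells of $\Delta$ while $|K|$ is a union of such cells, a point of $|\Delta|$ lies in $|P|$ if and only if it lies in $|K|\cap|\Delta|^{H}$. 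Since $K$ is convex, $|K|$ is a convex subspace of the $\CAT(1)$ space $(|\Delta|,\dcat)$, and since $K\subseteq\Sigma$ we have $|K|\subseteq|\Sigma|$, which is isometric to a round sphere on which $\op_{\Sigma}$ acts as the antipodal map. In particular $\tau$ has a unique opposite $\op_{\Sigma}(\tau)$ in $\Sigma$, and the barycenter $b_{\op_{\Sigma}(\tau)}$ is exactly the antipode $-b_{\tau}$ of $b_{\tau}$. The contraction will go through as soon as $|P|$ avoids this point $-b_{\tau}$.

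The key step is to deduce $-b_{\tau}\notin|P|$ from the hypothesis. Assume for contradiction that $-b_{\tau}\in|P|=|K|\cap|\Delta|^{H}$. Since $-b_{\tau}$ lies in the open cell corresponding to $\op_{\Sigma}(\tau)$ and $|K|$ is a union of open cells, it follows that $\op_{\Sigma}(\tau)\in K$. Since $-b_{\tau}$ is fixed by $H$, and a simplicial automorphism fixing an interior point of a cell must stabilize that cell, $H$ stabilizes $\op_{\Sigma}(\tau)$. Hence $\op_{\Sigma}(\tau)\in K^{H}=P$, which is an opposite of $\tau$ lying in $P$, contradicting the hypothesis.

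To conclude, note that $|P|$ is contained in the round sphere $|\Sigma|$ and misses its unique point $-b_{\tau}$ at distance $\pi$ from $b_{\tau}$, so $\dcat(x,b_{\tau})<\pi$ for all $x\in|P|$; and since $K$ is finite (apartments of spherical buildings are finite Coxeter complexes), $|P|$ is compact and $\sup_{x\in|P|}\dcat(x,b_{\tau})<\pi$. Thus each $x\in|P|$ is joined to $b_{\tau}$ by a unique geodesic $\gamma_{x}$, these vary continuously with $x$ (standard for $\CAT(1)$ spaces, cf.\ \cite{BH}), and $F(x,t):=\gamma_{x}(t)$ is a continuous homotopy $|P|\times[0,1]\to|\Delta|$ from $\id_{|P|}$ to the constant map at $b_{\tau}$. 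Each value $F(x,t)$ lies in $|K|$, because $|K|$ is convex and $x,b_{\tau}\in|K|$, and it lies in $|\Delta|^{H}$, because $x$ and $b_{\tau}$ are $H$-fixed, $H$ acts by isometries, and the unique geodesic between two $H$-fixed points at distance $<\pi$ is $H$-fixed. Hence $F$ takes values in $|K|\cap|\Delta|^{H}=|P|$, and $|P|$ is contractible.

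The real obstacle, I expect, is the second paragraph: the remainder is a routine geodesic contraction, whereas passing from the combinatorial hypothesis ``$P$ contains no opposite of $\tau$'' to the geometric statement $-b_{\tau}\notin|P|$ uses, essentially, the $\CAT(1)$ model (an apartment realizes as a round sphere with $\op_{\Sigma}$ the antipodal map), the identification $|P|=|K|\cap|\Delta|^{H}$, and the fact that a simplicial automorphism fixing an interior point of a cell stabilizes it. One should also keep in mind that $H$ need not preserve $\S$, so $K$ is not an $H$-subcomplex in general; this is harmless precisely because the argument is phrased throughout in terms of $H$-stable simplices and the subspace $|K|\cap|\Delta|^{H}$.
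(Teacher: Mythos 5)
Your proof is correct and is essentially the paper's argument: contract $\bigcap_{\Sigma\in\S}\Sigma^H$ along the unique $H$-fixed geodesics to the barycenter of $\tau$ in the $\CAT(1)$ realization, the hypothesis guaranteeing every point lies at distance $<\pi$ from it. You merely make explicit some steps the paper leaves implicit (the antipode/open-cell argument ruling out $-b_\tau$, convexity of the intersection of apartments, and continuity of the geodesic contraction), which is fine.
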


\begin{proof}
Let $X := \bigcap_{\Sigma\in \S} \Sigma^H$, and note that $|X| =  \bigcap_{\Sigma\in \S} |\Sigma'|^H$.
Assume that there is no opposite of $\tau$ in $X$.
Then every point $x\in X$ is at distance $ < \pi$ from the barycenter of $\tau$ (say, $x_0$).
Hence, by the discussion in Subsection \ref{subsub:autDelta}, $H$ fixes the unique geodesic joining $x$ with $x_0$.
Thus, we can contract $X$ to $x_0$ using these geodesics.
\end{proof}

\section{Posets and simplicial complexes in the case of vector spaces}
\label{sec:vectorspaces}

In this section we take a closer look at the poset of partial decompositions, the poset of ordered partial decompositions
and the common bases complex for vector spaces. 
We demonstrate how building-related constructions show up.
The results then motivate and guide the more general definitions for buildings in Sections \ref{sec:CBBuildings}, \ref{sec:leviSpheres} and
\ref{sec:opd}.

Let $V$ be a finite-dimensional vector space defined over a field $k$.
Write $T(V)$ for the poset of proper non-zero subspaces of $V$, ordered by inclusion.
We denote by $\Delta(V)$ the order complex of $T(V)$, which is also the building associated with the group $\GL(V)$.

\begin{definition}
\label{def:DandPDVectorSpaces}
Let $V$ be a finite-dimensional vector space defined over a field $k$.
A partial decomposition of $V$ is a subset $\{S_1,\ldots,S_r\} \subseteq T(V)$ such that
\[ \gen{S_1,\ldots,S_r} \cong S_1\oplus \cdots \oplus S_r.\]

We denote by $\PD(V)$ the poset of partial decompositions of $V$ other than $\emptyset$ and $\{V\}$, with order given by refinement; that is
for $d_1,d_2\in \PD(V)$
\[ d_1\leq d_2 \text{ if for all } S\in d_1 \text{ there is } T\in d_2 \text{ such that } S\leq T.\]

A full decomposition $V$ is a partial
decomposition $\{S_1,\ldots, S_r\}$ such that $V \cong S_1 \oplus\cdots \oplus S_r$. The poset of full decompositions of $V$ is the subposet
$\D(V)$ of $\PD(V)$ on the set of full decompositions of $V$.
\end{definition}

Note that we are not including $\{V\}$ in the poset $\D(V)$
and that 
\[ d \text{ is a partial decomposition } \Leftrightarrow
\ \dim \gen{S\tq S\in d} = \sum_{S\in d} \dim S.\]

We will also work with the ordered versions of $\PD(V)$ and $\D(V)$:

\begin{definition}
\label{def:ODandOPDVectorSpaces}
Let $V$ be a finite-dimensional vector space defined over a field $k$.
The poset of ordered partial decompositions of $V$, denoted by $\OPD(V)$ consists of tuples of distinct subspaces $(S_1,\ldots,S_r)$ such that $\{S_1,\ldots,S_r\}\in \PD(V)$.
The ordering in $\OPD(V)$ is given by refinement that preserves the order of the elements of the tuples.
That is, if $d_1 = (S_1,\ldots,S_r)$ and $d_2 = (W_1,\ldots,W_t)$, then $d_1\leq d_2$ if for all $1\leq i\leq j\leq r$ there are $1\leq k\leq l\leq t$ such that $S_i\leq W_k$ and $S_j\leq W_l$.

The poset $\OD(V)$ of ordered full decompositions of $V$ is the subposet of
$\OPD(V)$ on the set of $(S_1,\ldots,S_r)$ such that $\{S_1,\ldots,S_r\}\in \D(V)$.
\end{definition}

As observed first in \cite{LR}, the poset $\OD(V)$ is naturally isomorphic to the poset of opposite pairs of the building $\Delta(V)$.

\begin{remark}
[{Opposite simplices}]
\label{rk:ODlinearCase}
An ordered full decomposition $d = (S_1,\ldots,S_r)\in \OD(V)$ determines the pair $\big(\,P(d),Q(d)\,\big)$ of flags given by
\[ P(d) = (\,S_1 < S_1 \oplus S_2 < \cdots < S_1\oplus\cdots \oplus S_{r-1}\,),\]
and
\[ Q(d) = (\, S_r < S_r\oplus S_{r-1} < \cdots < S_2\oplus\cdots \oplus S_r\,).\]
In the language of buildings, this means that $P(d),Q(d)$ are opposite simplices of $\Delta(V)$.
For $\Delta(V)$ the notion of opposition
of simplices from Subsection \ref{sub:opposition} translates into
the following. Two simplices $\sigma$ and $\tau$ from $\Delta(V)$ are opposite if they have the same dimension and for all $S\in \sigma$ there exists a unique $T\in \tau$ such that $V = \gen{S,T} \cong S\oplus T$.

Therefore, the poset $\OD(V)$ can be alternatively described as the poset of pairs of opposite simplices of the building $\Delta(V)$, 
where the ordering is given by coordinate-wise reverse inclusion.
The isomorphism is given by $d\mapsto \big(\,P(d), Q(d)\,\big)$.

Another description of $\OD(V)$ is in terms of the Charney poset.
Recall that the Charney poset $\Ch(V)$ consists of pairs $(S,T)$ of proper non-zero subspaces of $V$ such that $S\oplus T = V$.
The ordering in $\Ch(V)$ is given by zig-zag containment:
\[ (S_1,T_1) \leq (S_2,T_2) \  \Leftrightarrow \ S_1\leq S_2 \text{ and } T_1\geq T_2.\]
Then it is not hard to see that
\[ \OD(V) = \X\big(\,\K(\,\Ch(V)\,)\,\big)^{\op}.\]
The Charney poset was introduced by R. Charney \cite{Charney} in the context of free modules over Dedekind domains $R$, where it was used to establish homological stability results for the linear groups $\GL_n(R)$.
\end{remark}

We have seen that $\OD(V)$ has an intrinsic description in terms of building properties.
Our next theorem describes $\D(V)$ in terms of Levi subgroups, hinting at a possible definition of the full decomposition poset for arbitrary buildings that arise from the BN-pair of the $k$-points of a connected reductive algebraic group.

In what follows, suppose that $\overline{k}$ is the algebraic closure of a field $k$.
We say that $L$ is a $k$-Levi subgroup of $\GL_n(k)$ if $L$ is an algebraic group defined over $k$ and it is the Levi complement in a parabolic subgroup of $\GL_n$ that is also defined over $k$.

\begin{theorem}
\label{thm:decompAndLeviVectorSpaces}
Let $\L(\GL_n,k)$ denote the poset of (proper) $k$-Levi subgroups of $\GL_n$ ordered by inclusion.
Then we have a $\GL_n(k)$-equivariant poset isomorphism $\L(\GL_n,k)\groupiso_{\GL_n(k)} \D(k^n)$.
\end{theorem}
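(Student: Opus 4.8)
The plan is to construct the isomorphism explicitly in both directions and check functoriality. First I would recall the classical description of parabolic subgroups of $\GL_n$ over $k$: a $k$-parabolic subgroup of $\GL_n$ is the stabilizer of a flag $0 < W_1 < \cdots < W_{r-1} < k^n$ of $k$-subspaces, and a $k$-Levi complement in it is obtained by choosing a splitting, i.e.\ a direct sum decomposition $k^n = S_1 \oplus \cdots \oplus S_r$ refining the flag in the sense that $W_i = S_1 \oplus \cdots \oplus S_i$. The Levi complement is then the subgroup of block-diagonal matrices $\prod_{i} \GL(S_i) \leq \GL_n$, and two splittings give the same Levi subgroup if and only if they induce the same \emph{unordered} decomposition $\{S_1,\ldots,S_r\}$, since the subgroup $\prod_i \GL(S_i)$ determines its own isotypic summands as the minimal non-trivial invariant direct summands (equivalently, the fixed spaces of the center). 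Thus the assignment
\[
 L = \textstyle\prod_{i=1}^r \GL(S_i) \ \longmapsto\ \{S_1,\ldots,S_r\}
\]
is a well-defined bijection from $\L(\GL_n,k)$ to $\D(k^n)$; the inverse sends a full decomposition $\{S_1,\ldots,S_r\}$ to the block-diagonal subgroup $\prod_i \GL(S_i)$. One must only note that the properness condition matches on both sides: excluding $\GL_n$ itself corresponds to excluding the trivial decomposition $\{k^n\}$, which is exactly how $\D(k^n)$ was defined.

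Next I would verify that this bijection reverses or preserves the order as claimed. If $\{S_1,\ldots,S_r\}$ refines $\{T_1,\ldots,T_s\}$ in $\D(k^n)$ — meaning each $S_i$ is contained in some $T_j$, so that each $T_j$ is an internal direct sum of those $S_i$ it contains — then the block-diagonal subgroup for the $S$-decomposition is contained in the block-diagonal subgroup for the $T$-decomposition: within each $\GL(T_j)$, block-diagonal matrices with respect to the partition of $T_j$ into its $S_i$-summands form a subgroup. Conversely, if $\prod_i \GL(S_i) \leq \prod_j \GL(T_j)$, then each $S_i$, being a minimal invariant summand of the smaller group, must lie inside one of the isotypic summands $T_j$ of the larger group (a $\GL(S_i)$-irreducible summand cannot straddle two blocks). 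Hence $L \leq L'$ in $\L(\GL_n,k)$ if and only if the corresponding decompositions satisfy $d_L \geq d_{L'}$, i.e.\ the map is an isomorphism of posets once we order $\L(\GL_n,k)$ by inclusion and $\D(k^n)$ by refinement — precisely the conventions in the statement. (If the reader prefers, one can phrase this symmetrically via $\D(k^n)^{\op}$, but with the stated conventions the map is order-preserving.)

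Finally, equivariance: $\GL_n(k) = \GL(k^n)$ acts on $\L(\GL_n,k)$ by conjugation and on $\D(k^n)$ by $g \cdot \{S_1,\ldots,S_r\} = \{gS_1,\ldots,gS_r\}$, and for $g \in \GL(k^n)$ one has $g\bigl(\prod_i \GL(S_i)\bigr)g^{-1} = \prod_i \GL(gS_i)$, so the bijection intertwines the two actions. The main obstacle is not any single hard computation but rather the bookkeeping needed to pin down the correspondence rigidly: one must argue cleanly that a $k$-Levi subgroup recovers its defining unordered decomposition canonically — independently of which parabolic it was presented as a complement in and of the ordering of blocks — and here the cleanest argument is the representation-theoretic one (the summands $S_i$ are exactly the minimal non-zero direct summands of $k^n$ stable under $L$, equivalently the isotypic components of $L$ acting on $k^n$). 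Once that canonicity is established, the order and equivariance statements are formal.
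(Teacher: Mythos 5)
Your proposal is correct and follows essentially the same route as the paper: both identify a $k$-Levi subgroup of $\GL_n$ with the block-diagonal stabilizer $\prod_i \GL(S_i)$ of the canonical decomposition of the underlying module into its minimal $L$-invariant summands, invert the correspondence via the (pointwise) stabilizer of a decomposition, and observe that refinement of decompositions matches inclusion of Levi subgroups, with equivariance being formal. One small slip: with the paper's refinement convention (Definition \ref{def:DandPDVectorSpaces}, where finer decompositions are \emph{smaller}), your displayed equivalence should read $L\le L'$ iff $d_L \le d_{L'}$ rather than $d_L \ge d_{L'}$; your final conclusion that the map is order-preserving under the stated conventions is the correct one.
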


\begin{proof}
Any Levi subgroup $L$ of $\GL_n$ is a connected reductive algebraic group, so it decomposes the underlying module $\overline{k}^n$ into a direct sum $\{\overline{S_1},\ldots, \overline{S}_r\}$ of $L$-invariant subspaces.
If $L$ is defined over $k$, $\{\overline{S_1}\cap k^n,\ldots, \overline{S}_r\cap k^n\}$ gives a direct sum decomposition of $k^n$.
This gives a well-defined order-preserving map $\L(\GL_n,k)\to \D(k^n)$.
The inverse of this map is given by mapping a direct sum decomposition to the $\GL_n$-pointwise stabilizer.
That is, for $d\in \D(k^n)$, we define $L = \bigcap_{S\in d} \Stab_{\GL_n(\overline{k})}( S\otimes_k \overline{k} ).$
\end{proof}

Next, we analyze the homotopy type of $\PD(V)$.
For that, it will be useful to recall the notion of a common basis.

\begin{definition}
[{Common basis}]
\label{def:CBVectorSpaces}
Let $V$ be a finite-dimensional vector space.
We say that a collection $\sigma$ of subspaces of $V$ has a common basis if there is a basis $B$ of $V$ such that $B\cap W$ is a basis of $W$ for all $W\in \sigma$.

We let $\CB(V)$ denote the simplicial complex whose simplices are collections of proper and non-zero subspaces of $V$ with a common basis.
\end{definition}

Note that a collection of subspaces with a common basis in a finite-dimensional vector space must be finite.
Thus $\CB(V)$ is indeed a simplicial complex of dimension $2^{\dim V}-3$.

The complex $\CB(V)$ was introduced in \cite{Rognes} by Rognes, who considered its suspension to be a stable building for $V$ and conjectured that indeed $\widetilde{H}_k(\CB(V),\ZZ) = 0$ for $k \neq 2n-3$.
In fact, Rognes defined the common bases complex for finite-rank free modules over local rings and Euclidean domains, and the conjecture is stated in such a more general context.
This conjecture was settled (for vector spaces) by Miller, Patzt and Wilson \cite{MPW}.

In \cite{BPW24}, the authors showed that $\CB(V)$ has the homotopy type of $\PD(V)$, so Rognes's conjecture actually states that $\PD(V)$ is a spherical poset.

\begin{theorem}
[Br\"uck--Piterman--Welker]
\label{thm:PDandCBVectorSpaces}
Let $V$ be an $n$-dimensional vector space, and $H$ a group acting on $V$.
Then there is an $H$-equivariant map $\CB(V) \to \PD(V)$ that is a homotopy equivalence.
\end{theorem}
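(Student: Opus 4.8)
The plan is to interpolate between $\CB(V)$ and $\PD(V)$ by an auxiliary poset of ``compatible pairs'', to apply Quillen's fiber theorem (Theorem~\ref{thm:quillen}) to its two coordinate projections, and then to promote the resulting homotopy equivalence to an $H$-equivariant one via the equivariant Whitehead theorem (Theorem~\ref{thm:whitehead}). The starting point is the observation that a partial decomposition is automatically a common-basis collection: if $d=\{S_1,\dots,S_r\}\in\PD(V)$, pick a basis $B_i$ of each $S_i$, extend $B_1\cup\dots\cup B_r$ to a basis $B$ of $V$, and use $\langle S_1,\dots,S_r\rangle=S_1\oplus\dots\oplus S_r$ to see that $B\cap S_i=B_i$; the same $B$ is a common basis of the ``saturation'' $\widehat d$ of $d$ obtained by adjoining all nonzero proper partial sums $\bigoplus_{i\in I}S_i$. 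So there is a canonical assignment $d\mapsto\widehat d$ from $\PD(V)$ into the simplices of $\CB(V)$, but one checks that it fails to be monotone for the relevant orders (refinement on $\PD(V)$, face inclusion on $\CB(V)$); hence no single combinatorial map realizes the equivalence and the comparison must be routed through an intermediate object.

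That object $\mathcal{E}$ is a poset of pairs $(\sigma,d)$ with $\sigma$ a nonempty simplex of $\CB(V)$, $d\in\PD(V)$, and $\sigma,d$ required to be ``mutually compatible'' — the natural first guess being that every $W\in\sigma$ is a union of blocks of $d$, so that a common basis of $d$ is also one of $\sigma\cup d$; the precise compatibility condition has to be tuned so that the fiber analysis below goes through. Ordering $\mathcal{E}$ so that $\sigma$ grows while $d$ refines makes the coordinate projections
\[ \X(\CB(V))\;\xleftarrow{\;p_{1}\;}\;\mathcal{E}\;\xrightarrow{\;p_{2}\;}\;\PD(V)^{\op} \]
order preserving, and they are $H$-equivariant for any group $H$ acting on $V$ because everything in sight is canonical. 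Granting that both projections are homotopy equivalences, one composes $p_{2}$ with a homotopy inverse of $p_{1}$ to obtain a homotopy equivalence $\CB(V)\to\PD(V)$; to upgrade it to an $H$-equivariant map one runs the identical fiber analysis on the fixed-point subposets $\mathcal{E}^{K}$, $\X(\CB(V))^{K}$ and $\PD(V)^{K}$ for every $K\le H$ (these are again decomposition posets and common-basis complexes of $V$ carrying the inherited symmetry, so the arguments transfer verbatim) and invokes Theorem~\ref{thm:whitehead}.

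The verification that $p_{1}$ and $p_{2}$ are homotopy equivalences is the technical core, and the contractibility of their fibers is what I expect to be the main obstacle. The fiber of $p_{1}$ over a simplex $\sigma$ should be essentially the poset of partial decompositions of $V$ that split every member of $\sigma$ (with the bookkeeping of which sub-collections of $\sigma$ they also split), while the fiber of $p_{2}$ over $d$ should be built out of the common-basis collections ``generated by'' $d$, with $\widehat d$ as a distinguished point; in each case the intended proof of contractibility is a conical contraction, pushing a compatible $d$ toward a canonical common refinement — the linear counterpart of the geometric principle behind Lemma~\ref{lm:contractibleIntersectionApartments}, that a convex piece of a building admitting no opposite to one of its faces is contractible. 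The difficulty is that, over a field, a collection of subspaces with a common basis does \emph{not} pin down a canonical common basis or a canonical compatible decomposition (for instance the atoms of the lattice it generates do not vary monotonically with $\sigma$), so the apparatus of opposition, convexity and the $\CAT(1)$ metric available for buildings is unavailable and the contraction must be constructed by hand. Two attempts I would make: (i) first enlarge each $\sigma$ to its lattice-closure and work on a barycentric subdivision, where a monotone ``common refinement'' is available; or (ii) bypass Quillen's theorem and instead produce a discrete Morse matching collapsing the (astronomically high-dimensional) complex $\CB(V)$ onto a subcomplex isomorphic to the order complex of $\PD(V)$. In either approach an induction on $\dim V$, exploiting link decompositions of both $\CB$ and $\PD$ (the latter as in the main theorems of this paper), should reduce the contractibility statements to strictly lower dimension.
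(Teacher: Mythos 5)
The paper does not actually prove this statement: it is imported verbatim from \cite{BPW24}, and the only in-paper argument of this kind is the building-level analogue, Theorem \ref{thm:PDandCB}, whose proof runs through a single order-preserving map on a barycentric subdivision (a chain is sent to its convex hull) with fiber contractibility established by Serre's complete-reducibility criterion and a $\CAT(1)$ circumcenter argument. Measured against either that proof or the statement itself, your proposal has genuine gaps rather than being a proof. The decisive content — contractibility of the fibers of your two projections $p_1,p_2$ — is exactly what you leave open: the compatibility relation defining $\mathcal{E}$ is never pinned down (you say it ``has to be tuned so that the fiber analysis below goes through''), and for the fibers you offer only two speculative remedies (lattice-closure on a subdivision, or a discrete Morse matching) with no argument that either succeeds. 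Since in the linear setting there is no canonical common basis, no canonical complement, and no opposition/convexity machinery, this is precisely where all the difficulty of the theorem sits; a plan that defers it has not proved anything. Incidentally, your premise that ``no single combinatorial map realizes the equivalence'' is not forced: after subdividing, direct monotone comparisons do exist (this is how Theorem \ref{thm:PDandCB} and Theorem \ref{thm:PDandPDBuildingVectorSpace} are organized), so the zig-zag through $\mathcal{E}$ is an added complication, not a necessity.

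The equivariance step is also not sound as written. First, $\CB(V)^K$ and $\PD(V)^K$ are \emph{not} common basis complexes or partial decomposition posets of a vector space — they consist of $K$-invariant subspaces and $K$-invariant (but not pointwise-fixed) collections — so ``the arguments transfer verbatim'' is unjustified; in the building-level proof the fixed-point fibers need genuinely new input (the circumcenter argument in Case 2 of the proof of Theorem \ref{thm:PDandCB}, and Lemma \ref{lm:contractibleIntersectionApartments}), and you would need a linear-algebra substitute for that. Second, the theorem asks for an $H$-equivariant \emph{map} $\CB(V)\to\PD(V)$; composing $p_2$ with a homotopy inverse of $p_1$ does not produce one, since a homotopy inverse of an equivariant map need not be equivariant. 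To extract an equivariant map from your zig-zag you would have to carry out the full fixed-point fiber analysis for all $K\le H$ and then invoke Theorem \ref{thm:whitehead} (with its hypotheses on $H$) to get an equivariant inverse of $p_1$ — again, the part of the argument you have deferred. So the skeleton is reasonable and parallels the strategy actually used at the building level, but the proposal is missing its core: concrete fiber contractibility (ordinary and fixed-point) and a construction that outputs an honest equivariant map.
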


An earlier unpublished result by Hanlon, Hersh and Shareshian \cite{HHS} shows that $\PD(V)$ is Cohen-Macaulay for finite fields. This result together with Theorem \ref{thm:PDandCBVectorSpaces} provides an independent proof of  Rognes' conjecture for finite fields. Also given in \cite{HHS} is a description of the $GL(V)$-module structure of $\widetilde{H}_{2n-3}(\PD(V),\mathbb{C})$.

In what follows, we provide an alternative ``up-to-homotopy" description of the $\PD(V)$ poset in terms of the building and the poset of decompositions (Levi subgroups).
Recall that $\Delta(V)$ denotes the order complex of the poset of proper non-zero subspaces of $V$, which is the building of $V$.

\begin{definition}
\label{def:PDBuildingVectorSpace}
Let $V$ be a finite-dimensional vector space.
Let $\OKtwo{V}$ (resp., $\Ktwo{V}$) be the poset on 
the (disjoint) union of $\X(\Delta(V))$, and $\OD(V)$ (resp., $\D(V)$).
The order relation $\preceq$ in $\OKtwo{V}$ is the induced order on both
$\Delta(V)$ 
and  $\OD(V)$ 
(resp. $\D(V)$). 
For $\sigma\in\Delta(V)$ and $d\in \OD(V)$ (resp., $d \in \D(V)$) we set $\sigma \prec d$ if the set of subspaces appearing in at least one of  $\sigma$ and $d$ has a common basis.
\end{definition}

It is not hard to see that $\preceq$ is indeed a poset relation for $\OKtwo{V}$ and for $\Ktwo{V}$.

The next theorem says that each of the posets $\OKtwo{V}$ and $\Ktwo{V}$ has the same $\GL(V)$-equivariant homotopy type as the corresponding partial decomposition poset.

\begin{theorem}
\label{thm:PDandPDBuildingVectorSpace}
Let $V$ be an $n$-dimensional vector space over a field $k$.
Let $\sigma \in \OPD(V)'$, and write $\sigma = \sigma_0\cup \sigma_1$ where $\sigma_0$ is the set of ordered partial decompositions in $\sigma$ that span a proper subspace of $V$, and $\sigma_1$ is the set of full ordered decompositions in $\sigma$.
Then, for $\sigma \in \OPD(V)'$, we set
\[ \phi(\sigma) = \begin{cases}
\{ \gen{s} \tq s\in \sigma_0\} & \sigma_1 = \emptyset,\\
\max \sigma_1 & \sigma_1\neq\emptyset.
\end{cases}\]
Then $\phi$ is a $\GL(V)$-equivariant poset map $\phi: \OPD(V)' \to \OKtwo{V}$ such that for all $H\leq \GL(V)$, the induced map $\phi_H: (\OPD(V)')^H \to \OKtwo{V}^H$ is a homotopy equivalence.

We get analogous conclusions for the corresponding map $\PD(V)'\to \Ktwo{V}$.
\end{theorem}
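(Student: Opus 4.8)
The plan is to check the formal properties of $\phi$ and then, for each subgroup $H\le\GL(V)$, prove that the fixed-point map $\phi_{H}$ is a homotopy equivalence via Quillen's fiber theorem (Theorem~\ref{thm:quillen}). For the formal part: $\phi$ is well defined because for $\sigma_{1}=\emptyset$ the spans $\gen{s}$ ($s\in\sigma_{0}$) form a flag, since refinement is compatible with taking spans, while for $\sigma_{1}\ne\emptyset$ the chain $\sigma_{1}$ has a top element; $\phi$ is order preserving, the only non-formal case of $\sigma\subseteq\tau$ being $\sigma_{1}=\emptyset\ne\tau_{1}$, where a common basis for the spans occurring in $\sigma_{0}$ together with the subspaces of $\max\tau_{1}$ exists because (for $s$ in a chain containing a full decomposition $d$ one has $s<d$, so $\gen{s}$ is split by $d$, and) a nested family of subspaces each split by a full decomposition $d$ has a common basis with $d$; and $\phi$ is $\GL(V)$-equivariant, hence restricts to poset maps $\phi_{H}\colon(\OPD(V)^{H})'\to\OKtwo{V}^{H}$ (an $H$-stable chain, being totally ordered, is fixed pointwise by $H$, so $(\OPD(V)')^{H}=(\OPD(V)^{H})'$). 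The map $\PD(V)'\to\Ktwo{V}$ is treated identically, forgetting the ordering of parts throughout.

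Fix $H\le\GL(V)$. By Theorem~\ref{thm:quillen} it suffices to prove that $\phi_{H}^{-1}\big((\OKtwo{V}^{H})_{\le y}\big)$ is contractible for every $y\in\OKtwo{V}^{H}$, and there are two kinds of $y$. If $y=F$ is an $H$-invariant flag, then since $\X(\Delta(V))$ is downward closed in $\OKtwo{V}$ this preimage is the poset of $H$-fixed chains of proper ordered partial decompositions all of whose spans lie among the subspaces of $F$; as $\GL(V)$ preserves dimension, the largest subspace $W$ of $F$ is $H$-invariant, and the one-part decomposition $(W)$ is a maximum of this poset, since any part of such an $s$ lies in $\gen{s}\le W$. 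Hence the fiber over a flag is contractible.

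The case $y=d=(S_{1},\dots,S_{r})\in\OD(V)^{H}$ (so that $H$ fixes each $S_{i}$) is the heart of the matter. A chain belongs to $\phi^{-1}(\OKtwo{V}_{\le d})$ exactly when its full-decomposition part refines $d$ and every span occurring in its proper part is split by $d$; thus $\phi_{H}^{-1}\big((\OKtwo{V}^{H})_{\le d}\big)=(Y_{d}^{H})'$, where $Y_{d}^{H}\subseteq\OPD(V)^{H}$ consists of the $H$-fixed full ordered decompositions refining $d$ together with the $H$-fixed proper ordered partial decompositions whose span is split by $d$. I would prove $Y_{d}^{H}$ contractible using two facts. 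First, $\OD(V)_{\le d}$ is isomorphic to the product $\prod_{i}\widehat{\OD}(S_{i})$ of the posets of possibly-trivial ordered full decompositions of the $S_{i}$, each of which has a top element, so $\OD(V)_{\le d}$ and its $H$-fixed part are contractible. Second, every subspace $W$ split by $d$ carries the canonical refinement $(W\cap S_{1},\dots,W\cap S_{r})$ (delete zero terms), which refines $d$ and so lies below $d$; hence although a general element $s$ of the proper part of $Y_{d}^{H}$ need not itself refine $d$, it may be joined inside $Y_{d}^{H}$ to $(\gen{s})$ and thence to the canonical refinement of $\gen{s}$, which does refine $d$. Carrying these moves out coherently over chains, one shows that the cycles in the proper part of $Y_{d}^{H}$ become null-homotopic once the full decompositions refining $d$ are present, and hence that $Y_{d}^{H}$ is contractible.

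I expect this coherence step to be the main obstacle: the proper part of $Y_{d}^{H}$ is by itself homotopy equivalent (via $s\mapsto(\gen{s})$) to the poset of $H$-invariant $d$-split proper nonzero subspaces, which is a nontrivial wedge of spheres in general, so one genuinely has to exhibit the fillings rather than a mere cone point. I would organize the argument as an induction on the number $r$ of parts of $d$, reducing a fiber over $d$ to fibers over the strictly finer full decompositions $e<d$ (which sit over the smaller posets $\OKtwo{V}_{\le e}$) and over flags, where contractibility is already available, and checking the base cases $r\le 2$ by hand, since there the canonical refinement of $W$ already recovers $W$. Once every fiber is contractible, Quillen's fiber theorem shows that $\phi_{H}$ is a homotopy equivalence for each $H$, which is the assertion of the theorem; the statement for $\PD(V)'\to\Ktwo{V}$ follows from the identical argument with $\OD$ replaced by $\D$.
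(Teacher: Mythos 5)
Your setup is the same as the paper's: check that $\phi$ is a well-defined, order-preserving, equivariant map, identify $(\OPD(V)')^H=(\OPD(V)^H)'$, and apply Quillen's fiber theorem to $\phi_H$, with the fiber over a flag $\tau\in\Delta(V)^H$ contracted to the one-part decomposition spanning the top subspace of $\tau$. All of this part of your argument is correct and matches the paper. The genuine gap is exactly where you say you expect it: the contractibility of the fiber $\phi_H^{-1}\big(\OKtwo{V}^H_{\preceq d}\big)$ for $d=(W_1,\dots,W_r)\in\OD(V)^H$ is never actually established. You correctly identify the fiber as the chain complex of the subposet consisting of $H$-fixed full decompositions refining $d$ together with $H$-fixed proper partial decompositions whose span is split by $d$, and you name the right ingredients (the zigzag $s\leq(\gen{s})\geq(\gen{s}\cap W_1,\dots,\gen{s}\cap W_r)\leq d$), but the ``coherence step'' is left as a proposal of an induction on $r$ that is not carried out, and it is not explained how a fiber over $d$ would be assembled from fibers over strictly finer decompositions and over flags (no nerve or gluing argument is given), so the heart of the proof is missing.

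Moreover, your worry that the fillings must be exhibited at the level of cycles, because the proper part of the fiber is a nontrivial wedge of spheres, points you toward more machinery than is needed. The paper contracts the fiber by a short chain of comparable order-preserving maps, with no induction: first an upward closure $h_1$ adjoining $d$ to every chain whose full part is non-empty, then $h_2$ deleting all full decompositions other than $d$; after these two retractions the fiber is the chain complex of $Z=U\cup F$, where $U=\{d\}\cup\{s \text{ proper}\tq s\leq d\}$ and $F=\{s \text{ proper}\tq (\gen{s})\preceq d,\ s\nleq d\}$ is upward closed. On $Z$ the map $h_3$ sending $s\in F$ to $(\gen{s})$ and fixing $U$ is order-preserving and comparable to the identity (note that $s\mapsto(\gen{s})$ is \emph{not} order-preserving on the whole fiber, precisely because a proper $s$ refining $d$ or a finer full decomposition can have parts spread over several $W_j$; this is why the preliminary reduction of the full part to $\{d\}$ and the restriction of $h_3$ to $F$ are needed, and it is the obstruction your sketch does not address). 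Finally the intersection map $\psi(s)=(\gen{s}\cap W_1,\dots,\gen{s}\cap W_r)$ on the image of $h_3$ (taken to be the identity on $U$) satisfies $\psi\leq\id$ and $\psi\leq\mathrm{const}_d$, giving contractibility. Everything is $H$-equivariant because $d$ and all the operations above are canonical. So the missing idea in your proposal is not a filling argument or an induction, but the promotion of your element-wise zigzag to globally defined, pointwise-comparable poset maps via the two preliminary retractions; without that (or a worked-out substitute), the proof is incomplete at its central step.
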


\begin{proof}
We prove the theorem for the ordered version.
The unordered version follows from similar arguments.
Let $H$ be any subgroup of $\GL(V)$, and let $X = (\OPD(V)')^H$, and $Y = \OKtwo{V}^H$.
Clearly, $\phi_H:X\to Y$ gives a well-defined order-preserving map.
Note that $(\OPD(V)')^H = (\OPD(V)^H)'$.

Now we invoke Quillen's fiber theorem to prove that $\phi_H$ is a homotopy equivalence.
So, we fix $y\in Y$ and show that $\phi_H^{-1}(Y_{\preceq y})$ is contractible.
We split into cases according to $y = \tau \in \Delta(V)^H$ or $y = d\in \OD(V)^H$.
Observe that if $\sigma \in X$, then every partial decomposition and subspace involved in $\sigma$ must be fixed by $H$.
Similarly, if $y\in Y$, then every chain and subspace involved in $y$ must be fixed by $H$.

\bigskip

\noindent
\textbf{Case 1.} $y = \tau\in \Delta(V)^H$.

Here we have
\[  \phi^{-1}_H(Y_{\preceq y}) = \phi^{-1}_H(\Delta(V)_{\subseteq \tau}) = \big( \{ s \in \OPD(V)^H  \tq \gen{s} \in \tau\} \big)'.\]
Let $W :=  \{ s \in \OPD(V)^H  \tq \gen{s} \in \tau\}$.
Then, if $S$ denotes the maximal element of $\tau$ (which must be fixed by $H$), we have that $s\leq (\gen{s}) \leq (S)$ for all $s\in Z$.
As $(S)\in W$, $W$, and hence $W' = \phi^{-1}_H(Y_{\preceq y})$, are contractible.

\bigskip

\noindent
\textbf{Case 2.} $y = d\in \OD(V)^H$.

This case is slightly more delicate.
First, we note that if $\sigma = \sigma_0\cup \sigma_1\in \phi^{-1}_H(Y_{\preceq y})$ and $\sigma_1\neq \emptyset$, then by transitivity we have that $\sigma \cup \{d\}\in (\OPD(V)^H)'$ and hence lies in the preimage $\phi^{-1}_H(Y_{\preceq y})$.
For $\sigma \in \phi^{-1}_H(Y_{\preceq y})$, we define
\[ h_1(\sigma) = \begin{cases}
    \sigma & \sigma_1 = \emptyset,\\
    \sigma \cup \{d\} & \sigma_1\neq\emptyset.
\end{cases}\]
Then, $h_1$ is an upward-closed operator, giving rise to a homotopy equivalence $\phi^{-1}_H(Y_{\preceq y}) \simeq \Im(h_1)$.
Next, as the image of $h_1$ contains those $\sigma\in \phi^{-1}_H(Y_{\preceq y})$ that either have $\sigma_1=\emptyset$ or $d\in \sigma_1$, we can perform one more homotopy and only keep $d$ in the $\sigma_1$-part.
That is, for $\sigma \in \Im(h_1)$, let
\[h_2(\sigma) = \begin{cases}
    \sigma & \sigma_1 = \emptyset,\\
    \sigma_0 \cup \{d\} & \sigma_1\neq\emptyset.
\end{cases}\]
Then we have that $h_2(\sigma) \subseteq \sigma$, giving rise to a new homotopy equivalence $\Im(h_1) \simeq \Im(h_2)$,
where
\begin{align*}
    \Im(h_2) & = \{ \sigma\in (\OPD(V)^H)' \tq \sigma = \sigma_0, \phi(\sigma_0) \preceq d\} \cup \{ \sigma\in (\OPD(V)^H)' \tq \sigma_1 = \{d\}\}\\
    & = \big( \{s\in \OPD(V)^H \setminus \OD(V) \tq ( \gen{s}) \preceq d\} \cup \{d\} \big)'\\
    & = \big( U \cup F \big)',
\end{align*}
where
\begin{align*}
    U & := \{d\} \cup \{ s\in \OPD(V)^H \setminus \OD(V) \tq s\leq d\},\\
    F & := \{s\in \OPD(V)^H \setminus \OD(V) \tq ( \gen{s} ) \preceq d, \, s\nleq d\}.
\end{align*}

Finally, we show that 
\[ Z :=  U \cup F \]
is contractible.
Note that $F$ is an upward closed subset of $Z$.
Let $F_2 = \{ (\gen{s}) \tq s\in F\}$.
Then, for $s\in Z$ we define
\[ h_3(s) = \begin{cases}
    s & s \in U,\\
    (\gen{s}) & s\in F.
\end{cases}\]
Since $h_3$ is a well-defined order-preserving map with $h_3(s) \leq s$ for all $s\in Z$, it defines a homotopy equivalence between $Z$ and its image $\Im(h_3) = U \cup F_2$.
Next, write $d=(W_1,\ldots,W_r)$, and if $s\in \OPD(V)^H$ and $\gen{s}, d$ have a common basis, then we can write
\[\psi(s) = (\gen{s}\cap W_1,\ldots,\gen{s}\cap W_r) \in \OPD(V)^H,\]
where we remove those terms that give a null intersection.
Note that, since $\gen{s},d$ have a common basis, $\psi(s) \leq d$ in $\OPD(V)^H$ and
\[ (\gen{s}) = (\gen{\psi(s)}) \geq \psi(s).\]
Moreover, if $s\in U$, then $\psi(s) = s$.
Therefore, for any $s\in \Im(h_3)$ we have
\[ s\geq \psi(s) \leq d.\]
This proves that $Z\simeq \Im(h_3)$ is contractible, finishing the proof.
\end{proof}

\section{The common basis complex for buildings}
\label{sec:CBBuildings}

In this section, we define common basis complexes for spherical buildings, extending the original definition for the linear case (see Definition \ref{def:CBVectorSpaces}).

\begin{definition}
\label{def:commonBasis}
Let $\Delta$ be a spherical building.
We write $\CB(\Delta)$ for the simplicial complex on the same
vertex set as $\Delta$ whose maximal simplices are 
$\displaystyle{\bigcup_{\sigma \in \Sigma}} \sigma$ for 
$\Sigma \in \A(\Delta)$. 

\end{definition}

In other words, if $\Sigma$ is an apartment in the complete system of apartments $\A(\Delta)$ of $\Delta$, and $V(\Sigma)$ denotes the vertex set of $\Sigma$, then $V(\Sigma)$ is a maximal simplex of $\CB(\Delta)$.
Conversely, every maximal simplex of $\CB(\Delta)$ is of that form.

If $K$ is a simplicial complex, full subcomplexes of $K$ are in one-to-one correspondence with subsets of the vertex set of $K$.
Therefore, we can identify a simplex of $\CB(\Delta)$ with the full subcomplex of $\Delta$ induced by $K$ on the set of vertices of the simplex.
This view point allows us to consider $\CB(\Delta)$ as the poset 
whose elements are full subcomplexes contained in some apartment 
from $\A(\Delta)$. 
Thus the maximal elements of $\CB(\Delta)$ are
the apartments of $\Delta$.
From now on, we will adopt this point of view.

\begin{example}
Let $\Delta$ be the building arising as the order complex of the poset of proper non-zero subspaces of a finite-dimensional vector space $V$.

For every apartment $\Sigma$ in $\A(\Delta)$ there is 
a basis of $V$ such that $\Sigma$ is the order complex of the proper
part of the poset of all subspaces spanned by the basis. Each poset
is a Boolean lattice on the basis.

Hence, an element of $\CB(\Delta)$ is a set of non-zero proper subspaces of $V$ that have a common basis.
This is exactly the common bases complex as described by Rognes \cite{Rognes} in the case of vector spaces.
See Definition \ref{def:CBVectorSpaces}.
\end{example}

Note that $\dim \CB(\Delta)$ is the number of vertices of an apartment minus one.
This number is much larger than $\dim \Delta$.
Indeed, for the linear case, $\dim \Delta = \dim V -2$ and
$\dim \CB(V) = 2^{\dim V} -3$. 

Next, we show up to homotopy equivalence the dimension can be reduced. 

If $K$ is a (full) subcomplex of some apartment $\Sigma$ of $\Delta$, then its convex hull $\Conv_{\Delta}(K) = \Conv_{\Sigma}(K)$ is also a full subcomplex of $\Sigma$.
This defines an upward-closed operator on the face poset of $\CB(\Delta)$, whose image is the poset of non-empty convex subcomplexes of $\Delta$ that lie in some apartment.

\begin{definition}
\label{def:convexSubcomplexes}
Let $\Delta$ be a spherical building. We define $Y(\Delta)$ as the poset of non-empty convex subcomplexes of $\Delta$ that lie in some apartment.
The ordering on $Y(\Delta)$ is given by inclusion.
\end{definition}

We immediately have:

\begin{lemma}
\label{lm:CBequivYDelta}
Let $\Delta$ be a spherical building.
Let $\Conv_{\Delta}:\X(\CB(\Delta)) \to Y(\Delta)$ be the map that takes a subcomplex $K\in \X(\CB(\Delta))$ to its convex hull:
\[ K \mapsto \Conv_{\Delta}(K).\]
Then $\Conv_{\Delta}$ is an $\Aut(\Delta)$-equivariant homotopy equivalence.
\end{lemma}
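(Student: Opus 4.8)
The plan is to apply the standard fact about upward-closed (closure) operators on posets, stated in the excerpt as: if $f:X\to X$ is order-preserving, $G$-equivariant, and satisfies $f(x)\geq x$ for all $x$, then $X\simeq_G \Im(f)$ (the dual of the displayed ``$f(x)\le x$'' statement, applied to $X^{\op}$). So first I would check that $\Conv_\Delta$ maps $\X(\CB(\Delta))$ into $Y(\Delta)$: by Definition~\ref{def:commonBasis} (in the adopted poset viewpoint) an element $K$ of $\CB(\Delta)$ is a full subcomplex of $\Delta$ contained in some apartment $\Sigma\in\A(\Delta)$; by the discussion in Subsection~\ref{sub:convexity}, $\Conv_\Delta(K)=\Conv_\Sigma(K)$ is again a full subcomplex of $\Sigma$, hence contained in an apartment, and it is non-empty since $K$ is. Thus $\Conv_\Delta(K)\in Y(\Delta)$.

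Next I would verify the three properties needed to conclude. Order-preservation: if $K\subseteq L$ are both in $\CB(\Delta)$, then any convex subcomplex containing $L$ contains $K$, so $\Conv_\Delta(K)\subseteq \Conv_\Delta(L)$. Monotonicity ($K\subseteq \Conv_\Delta(K)$): immediate from the definition of convex hull as the smallest convex subcomplex containing $K$. Equivariance: for $\varphi\in\Aut(\Delta)$ we have $\varphi(\Conv_\Delta(K))=\Conv_\Delta(\varphi(K))$, which is exactly the last sentence of Subsubsection~\ref{subsub:chambercomplexes}; note also that $\varphi$ permutes the complete system of apartments, so it does preserve $\CB(\Delta)$ and $Y(\Delta)$. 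Finally I must confirm that $\Im(\Conv_\Delta)$ is genuinely the poset $Y(\Delta)$ and not a proper subposet: every non-empty convex subcomplex $C$ that lies in some apartment is itself a full subcomplex contained in an apartment (convex subcomplexes are full, as recalled in Subsubsection~\ref{subsub:chambercomplexes}), hence $C\in\CB(\Delta)$ and $\Conv_\Delta(C)=C$ because $C$ is already convex. So $\Conv_\Delta$ is the identity on $Y(\Delta)$ and $\Im(\Conv_\Delta)=Y(\Delta)$.

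Putting these together: $\Conv_\Delta:\X(\CB(\Delta))\to\X(\CB(\Delta))$ is an $\Aut(\Delta)$-equivariant order-preserving map with $\Conv_\Delta(K)\supseteq K$ and image $Y(\Delta)$, so by the closure-operator principle it induces an $\Aut(\Delta)$-equivariant homotopy equivalence $\X(\CB(\Delta))\simeq_{\Aut(\Delta)} Y(\Delta)$, which is the assertion of Lemma~\ref{lm:CBequivYDelta}. I do not expect any real obstacle here — this is a formal ``up-closure'' argument — so the only thing to be careful about is the bookkeeping that the complete system of apartments $\A(\Delta)$ (rather than some incomplete one) is used throughout, which is what makes both $\CB(\Delta)$ and $Y(\Delta)$ stable under all of $\Aut(\Delta)$ and makes $\Conv_\Delta$ well defined on the nose (recall $\Conv_\Sigma(K)=\Conv_\Delta(K)$ for $K\subseteq\Sigma$, as noted in Subsection~\ref{sub:convexity}).
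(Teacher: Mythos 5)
Your proposal is correct and matches the paper's argument: the paper also views $\Conv_\Delta$ as a closure operator, pairing it with the inclusion $i:Y(\Delta)\hookrightarrow \X(\CB(\Delta))$ so that $\Conv_\Delta\circ i=\id$ and $i\circ\Conv_\Delta\geq\id$, and concludes equivariant homotopy equivalence exactly as you do. Your extra bookkeeping (image equals $Y(\Delta)$ because convex subcomplexes are full and lie in apartments) is the same verification the paper leaves implicit.
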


\begin{proof}
Let $i:Y(\Delta)\hookrightarrow \X(\CB(\Delta))$ denote the inclusion map.
Then $\Conv_{\Delta} \circ i = \id_{Y(\Delta)}$ and
\[ (i\circ \Conv_{\Delta})(K) = \Conv_{\Delta}(K) \supseteq K,\]
that is, $i\circ \Conv_{\Delta} \geq \id_{\X(\CB(\Delta))}$.
As $i$ and $\Conv_{\Delta}$ are equivariant maps, we conclude that $\Conv_{\Delta}$ is an $\Aut(\Delta)$-equivariant homotopy equivalence.
\end{proof}

Therefore, the barycentric subdivision of $\CB(\Delta)$ deformation retracts onto a much smaller-dimensional subcomplex $Y(\Delta)$.
Since $\dim Y(\Delta) = \dim Y(\Sigma)$ for any apartment $\Sigma$ of $\Delta$, based on preliminary computations we propose:

\begin{question}
Let $\Sigma$ be a finite Coxeter complex.
Is it true that $$\dim Y(\Sigma) = \dim \Sigma + \# \{ \text{positive roots}\} \,?$$
\end{question}

The following example shows that in the type $A$ case 
$\dim Y(\Sigma)$ is a least a big as the right-hand side.

\begin{example}
Let $\Sigma$ be the type $A_n$ Coxeter complex.
We identify the roots of $\Sigma$ with the roots represented as vectors in the standard definition of a root system. In particular, the positive roots are $e_i - e_j$ for $1 \leq i < j \leq n$. 
We order the roots $e_i-e_j$ by the lexicographic order on the
index pairs $(i,j)$.
Intersecting the hemispheres corresponding to the roots in this order yields a descending sequence of convex subcomplexes of $\Sigma$ terminating in a chamber. 
Since simplices (regarded as subcomplexes) are convex (see \cite[2.26]{Tits1}), we can extend the
descending sequence by a maximal chain of non-empty 
faces of the chamber. Thus we obtain a chain of
$n+\binom{n}{2}$ convex subcomplexes. Showing that the
dimension of $Y(\Sigma)$ is at least $\dim \Sigma + \# \{ \text{positive roots}\}$ in this case.
\end{example}

We will further explore the homotopy type of $Y(\Delta)$ in the next section.
For that, we need the following result on certain upper intervals in $Y(\Delta)$.

\begin{proposition}
\label{prop:intervalSphereYPoset}
Let $\Delta$ be a spherical building, and let $S\in Y(\Delta)$ be a Levi sphere.
Let $\sigma\in S$ be a maximal simplex.
Then we have a homotopy equivalence
\begin{align*}
    A :  Y(\Delta)_{\supsetneq S} & \longrightarrow  Y(\Lk_{\Delta}(\sigma))\\
     K & \longmapsto K\cap \Lk_{\Delta}(\sigma).    
\end{align*}
\end{proposition}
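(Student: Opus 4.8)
The plan is to prove that $A$ is a homotopy equivalence by exhibiting an explicit order-preserving map $B$ in the opposite direction and showing that $(A,B)$ is a Galois connection between the two posets; a Galois connection automatically gives a homotopy equivalence of order complexes, since then both round-trip composites are comparable to the relevant identity maps and hence homotopic to them.

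To set things up, note that since $S$ is a Levi sphere and $\sigma\in S$ is a maximal simplex, Lemma~\ref{lm:convexIsleviSphere} tells us that $|S|$ is a sphere of dimension $\dim S=\dim\sigma$; thus $\sigma$ has a unique antipode $\sigma'$ inside $S$, the simplices $\sigma,\sigma'$ are opposite in $\Delta$, and $S=\Conv_\Delta(\sigma,\sigma')$. I would then define
\[
B\colon Y(\Lk_\Delta(\sigma))\longrightarrow Y(\Delta)_{\supsetneq S},\qquad B(L)=\Conv_\Delta(S\cup L),
\]
where $L$ is regarded as the collection of simplices of $\Delta$ (each containing $\sigma$) out of which it is built. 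This $B$ is clearly order-preserving, and its well-definedness I would check using Lemma~\ref{lm:bijectionApartmentsLinkOpposite}: any apartment $\widetilde\Sigma$ of $\Lk_\Delta(\sigma)$ with $L\subseteq\widetilde\Sigma$ has the form $\Lk_\Sigma(\sigma)$ for some $\Sigma\in\A(\Delta,\sigma,\sigma')$, and then $\Sigma$ contains $\Conv_\Delta(\sigma,\sigma')=S$ as well as $L$, so $B(L)=\Conv_\Delta(S\cup L)\subseteq\Sigma$ lies in an apartment; moreover $B(L)\supsetneq S$ because $L$, being non-empty as a subcomplex of the link, contains a simplex strictly above $\sigma$, which cannot lie in $S$ as $\sigma$ is maximal there.

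The step I expect to be the main obstacle is checking that $A$ is well-defined, i.e. that $\Lk_K(\sigma)=K\cap\Lk_\Delta(\sigma)$ lies in $Y(\Lk_\Delta(\sigma))$ for every $K\in Y(\Delta)_{\supsetneq S}$. Containment in an apartment is immediate ($K\subseteq\Sigma$ forces $\Lk_K(\sigma)\subseteq\Lk_\Sigma(\sigma)$, an apartment of $\Lk_\Delta(\sigma)$). For non-emptiness as a subcomplex of the link: if $\Lk_K(\sigma)=\{\sigma\}$ then $\sigma$ is maximal in the pure convex complex $K$, so $\dim K=\dim\sigma=\dim S$, and then $\sigma,\sigma'\in K$ are opposite simplices of dimension $\dim K$, so Lemma~\ref{lm:convexIsleviSphere} forces $K=\Conv_\Delta(\sigma,\sigma')=S$, contradicting $K\supsetneq S$. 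For convexity, I would work inside an apartment $\Sigma\supseteq K$, write $K$ as an intersection of roots $\alpha$ of $\Sigma$, and observe that each such $\alpha$ must have $\sigma$ contained in its wall $\partial\alpha$: indeed $\sigma'\in S\subseteq K\subseteq\alpha$, and since the opposition $\op_\Sigma$ acts as $-\id$ on $|\Sigma|\cong\SS^{\dim\Delta}$ and carries the vertices of $\sigma$ to those of $\sigma'$, any vertex of $\sigma$ sitting in the open root $\alpha\setminus\partial\alpha$ would have its antipode, a vertex of $\sigma'$, in $(-\alpha)\setminus\partial\alpha$ --- impossible since $\sigma'\subseteq\alpha$. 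Hence each $\Lk_\alpha(\sigma)$ is a root of the Coxeter complex $\Lk_\Sigma(\sigma)$, so $\Lk_K(\sigma)=\bigcap_\alpha\Lk_\alpha(\sigma)$ is an intersection of roots, hence convex in $\Lk_\Sigma(\sigma)$ and therefore in the building $\Lk_\Delta(\sigma)$.

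With both maps in hand, I would finish by verifying the Galois connection $B(L)\le K\iff L\le A(K)$: both conditions are equivalent to $L\subseteq K$ --- for the forward direction, $\Conv_\Delta(S\cup L)\subseteq K$ gives $L\subseteq K$; for the reverse, $L\subseteq K\cap\Lk_\Delta(\sigma)\subseteq K$ together with $S\subseteq K$ and convexity of $K$ gives $\Conv_\Delta(S\cup L)\subseteq K$. Consequently $A\circ B\ge\id_{Y(\Lk_\Delta(\sigma))}$ and $B\circ A\le\id_{Y(\Delta)_{\supsetneq S}}$, and since two comparable order-preserving self-maps of a poset induce homotopic maps on the order complex, $A\circ B\simeq\id$ and $B\circ A\simeq\id$; hence $A$ is a homotopy equivalence with homotopy inverse $B$. (Both $A$ and $B$ are visibly equivariant under the stabilizer of the pair $(\sigma,S)$, which yields an equivariant refinement should one be needed later.)
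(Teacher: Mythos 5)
Your proof is correct and follows essentially the same route as the paper: your homotopy inverse $B(L)=\Conv_\Delta(S\cup L)$ is in fact the paper's map (there written as the intersection of all roots $\alpha$ with $\sigma,\sigma'\in\partial\alpha$ and $\Lk_{\alpha}(\sigma)\supseteq L$), your well-definedness checks rest on the same facts (every root containing $K$ has $\sigma,\sigma'$ in its wall, Lemma \ref{lm:bijectionApartmentsLinkOpposite}, Lemma \ref{lm:convexIsleviSphere}), and your Galois connection $B(L)\subseteq K\iff L\subseteq A(K)$ merely repackages the paper's comparisons $AB=\id$ and $BA\leq\id$. The one soft spot is the unproved assertion that $K$ is pure in the non-emptiness step; this is true (a convex subcomplex is an intersection of roots, hence a union of closed chambers of the flat it spans), and the paper is equally terse at exactly that point, so it is a fillable detail rather than a genuine gap.
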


\begin{proof}
Let $\sigma' \in S$ be the unique opposite of $\sigma$ in $S$.
Note that every root containing a convex subcomplex $K\in Y(\Delta)_{\supsetneq S}$, automatically contains $\sigma$ and $\sigma'$.
From now on, we assume that $\sigma$ is not a chamber in $\Delta$, i.e., $S$ is not an apartment (otherwise the result is vacuous).

First, we show that the map
\begin{align*}
R_{\sigma} : \{ \alpha\in \roots(\Delta) \tq \sigma,\sigma'\in \partial \alpha\} & \longrightarrow \roots(\Lk_{\Delta}(\sigma))\\
\alpha & \longmapsto \Lk_{\alpha}(\sigma)
\end{align*}
is surjective.
For subcomplexes or simplices in $\Lk_{\Delta}(\sigma)$, we will use the tilde notation $\widetilde{x}$.

Let $\widetilde{\alpha}\in \roots(\Lk_{\Delta}(\sigma))$, contained in some apartment $\widetilde{\Sigma}\in \A(\Lk_{\Delta}(\sigma))$.
Let $\Sigma\in \A(\Delta,\sigma,\sigma')$ be the unique apartment such that $\Lk_{\Sigma}(\sigma) = \widetilde{\Sigma}$ (see Lemma \ref{lm:bijectionApartmentsLinkOpposite}).
By \cite[Prop. 3.79]{AB}, there is a unique root $\alpha\in \roots(\Sigma)$ that contains $\sigma$ and $\Lk_{\alpha}(\sigma) = \widetilde{\sigma}$.
Moreover, as $\partial \alpha = \op_{\Sigma}(\partial \alpha)$, $\sigma\in \partial\alpha$ and $\sigma'\in \Sigma$, we conclude that $\sigma'\in \partial\alpha\subseteq \alpha$.
This shows that $R_{\sigma}$ is surjective.

Next, suppose that $K \in Y(\Delta)_{\supsetneq S}$.
Let $\alpha\in \roots_{\Delta}(K)$, and let $\Sigma\in \A(\Delta)$ be an apartment for which $\alpha\in \roots(\Sigma)$.
Since $\sigma,\sigma'\in K \subseteq \alpha$, we see that $\sigma,\sigma' \in \alpha\cap -\alpha = \partial \alpha$.
Therefore, every root containing $K$ has $\sigma$ and $\sigma'$ in its boundary.
In particular, as $K$ is an intersection of roots of $\Sigma$ that contain $\sigma,\sigma'$ in their boundaries, we conclude that $A(K) = K\cap \Lk_{\Delta}(\sigma)$ is an intersection of roots of $\Lk_{\Delta}(\sigma)$, so it is a convex subcomplex of $\Lk_{\Sigma}(\sigma)$, and hence of $\Lk_{\Delta}(\sigma)$.
Moreover, as $S\subsetneq K$, $\sigma$ cannot be a maximal simplex of $K$ by Lemma \ref{lm:convexIsleviSphere}, so $K\cap \Lk_{\Delta}(\sigma) \neq \{\sigma\}$.
This proves that $A$ is a well-defined order-preserving map of posets.

Next, we define a homotopy inverse for $A$.
For $\widetilde{K}\in Y(\Lk_{\Delta}(\sigma))$, let
\[ B(\widetilde{K}) = \bigcap_{\alpha \in R_{\sigma}^{-1}\left (\roots_{ \Lk_{\Delta}(\sigma)}(\widetilde{K}) \right)} \alpha.\]
One can show that $B$ is a well-defined order-preserving map whose image strictly contains $S$.
Moreover, $BA(K) \subseteq K$ and $AB(\widetilde{K}) = \widetilde{K}$.
Thus, $A$ is a homotopy equivalence.
Note that it is indeed equivariant by any group of simplicial automorphisms that fixes $\sigma$ and $\sigma'$ (and hence $S$).
\end{proof}

We consider it an interesting question if the maps $A$ and $R_{\sigma}$ from the preceding proof are actually bijections.

The next example describes the poset $Y(\Delta)$ when $\Delta$ is the $A_2$ Coxeter complex.

\begin{example}
Let $\Sigma$ be the Coxeter complex with diagram $A_2$.
That is, the hexagon as displayed in Figure \ref{fig:hexagon}.
Note that $\Sigma$ is a building since we do not require buildings to be thick.

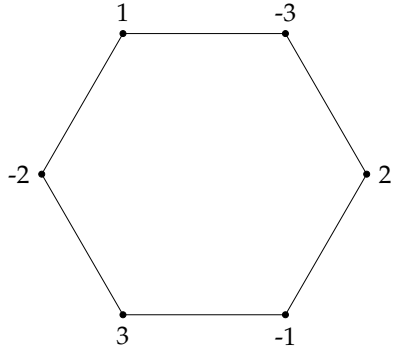
\begin{figure}[hb]
    \centering
    \scalebox{0.8}{
    \begin{tikzpicture}
   \newdimen\R
   \R=2.7cm
   \draw (0:\R) \foreach \x in {60,120,...,360} {  -- (\x:\R) };
   \foreach \x/\l/\p in
     { 60/{-3}/above,
      120/{1}/above,
      180/{-2}/left,
      240/{3}/below,
      300/{-1}/below,
      360/{2}/right
     }
     \node[inner sep=1pt,circle,draw,fill,label={\p:\l}] at (\x:\R) {};
\end{tikzpicture}}
    \caption{The $A_2$ Coxeter complex.}
    \label{fig:hexagon}
\end{figure}

In Figure \ref{fig:convexSubcomplexes}, we depict the poset of proper non-empty convex subcomplexes of $\Sigma$.
Here, $\alpha_1$ is the root containing the vertices $1,-1,2,-3$, $\alpha_2$ is the root containing $1,2,-2,-3$, and $\alpha_3$ the root containing $1,-2,3,-3$,

\begin{figure}
    \centering
    \begin{tikzpicture}
	\draw[draw=black, fill=black, thin, solid] (-10.00,2.00) circle (0.1);
	\draw[draw=black, fill=black, thin, solid] (-8.50,2.00) circle (0.1);
	\draw[draw=black, fill=black, thin, solid] (-7.00,1.50) circle (0.1);
	\draw[draw=black, fill=black, thin, solid] (-5.50,2.00) circle (0.1);
	\draw[draw=black, fill=black, thin, solid] (-4.00,1.50) circle (0.1);
	\draw[draw=black, fill=black, thin, solid] (-2.50,2.00) circle (0.1);
	\draw[draw=black, fill=black, thin, solid] (-1.00,1.50) circle (0.1);
	\draw[draw=black, fill=black, thin, solid] (0.50,2.00) circle (0.1);
	\draw[draw=black, fill=black, thin, solid] (2.00,2.00) circle (0.1);
	\draw[draw=black, fill=black, thin, solid] (-9.00,4.50) circle (0.1);
	\draw[draw=black, fill=black, thin, solid] (-7.00,4.50) circle (0.1);
	\draw[draw=black, fill=black, thin, solid] (-5.00,4.50) circle (0.1);
	\draw[draw=black, fill=black, thin, solid] (-3.00,4.50) circle (0.1);
	\draw[draw=black, fill=black, thin, solid] (-1.00,4.50) circle (0.1);
	\draw[draw=black, fill=black, thin, solid] (1.00,4.50) circle (0.1);
	\draw[draw=black, fill=black, thin, solid] (-10.00,0.00) circle (0.1);
	\draw[draw=black, fill=black, thin, solid] (-8.50,0.00) circle (0.1);
	\draw[draw=black, fill=black, thin, solid] (2.00,0.00) circle (0.1);
	\draw[draw=black, fill=black, thin, solid] (0.50,0.00) circle (0.1);
	\draw[draw=black, fill=black, thin, solid] (-5.50,0.00) circle (0.1);
	\draw[draw=black, fill=black, thin, solid] (-9.00,-2.50) circle (0.1);
	\draw[draw=black, fill=black, thin, solid] (-7.00,-2.50) circle (0.1);
	\draw[draw=black, fill=black, thin, solid] (-5.00,-2.50) circle (0.1);
	\draw[draw=black, fill=black, thin, solid] (-3.00,-2.50) circle (0.1);
	\draw[draw=black, fill=black, thin, solid] (-1.00,-2.50) circle (0.1);
	\draw[draw=black, fill=black, thin, solid] (1.00,-2.50) circle (0.1);
	\node[black, anchor=south west] at (-9.65,4.65) {$\alpha_1$};
	\node[black, anchor=south west] at (-7.75,4.65) {$-\alpha_1$};
	\node[black, anchor=south west] at (-5.56,4.65) {$\alpha_2$};
	\node[black, anchor=south west] at (-3.73,4.65) {$-\alpha_2$};
	\node[black, anchor=south west] at (0.27,4.65) {$-\alpha_3$};
	\node[black, anchor=south west] at (-1.59,4.65) {$\alpha_3$};
	\draw[draw=black, thin, solid] (-9.00,4.50) -- (-7.00,1.50);
	\draw[draw=black, thin, solid] (-7.00,1.50) -- (-7.00,4.50);
	\draw[draw=black, thin, solid] (-5.00,4.50) -- (-4.00,1.50);
	\draw[draw=black, thin, solid] (-4.00,1.50) -- (-3.00,4.50);
	\draw[draw=black, thin, solid] (-1.00,4.50) -- (-1.00,1.50);
	\draw[draw=black, thin, solid] (-1.00,1.50) -- (1.00,4.50);
	\draw[draw=black, thin, solid] (-10.00,2.00) -- (-9.00,4.50);
	\draw[draw=black, thin, solid] (-10.00,2.00) -- (-5.00,4.50);
	\draw[draw=black, thin, solid] (-8.50,2.00) -- (-9.00,4.50);
	\draw[draw=black, thin, solid] (-5.00,4.50) -- (0.50,2.00);
	\draw[draw=black, thin, solid] (0.50,2.00) -- (-1.00,4.50);
	\draw[draw=black, thin, solid] (2.00,2.00) -- (1.00,4.50);
	\draw[draw=black, thin, solid] (-3.00,4.50) -- (2.00,2.00);
	\draw[draw=black, thin, solid] (-7.00,4.50) -- (-5.50,2.00);
	\draw[draw=black, thin, solid] (-2.50,2.00) -- (-1.00,4.50);
	\draw[draw=black, thin, solid] (-3.00,4.50) -- (-5.50,2.00);
	\draw[draw=black, thin, solid] (-7.00,4.50) -- (-2.50,2.00);
	\draw[draw=black, thin, solid] (-8.50,2.00) -- (1.00,4.50);
	\node[black, anchor=south west] at (-9.26,-3.22) {1};
	\node[black, anchor=south west] at (-7.39,-3.22) {-1};
	\node[black, anchor=south west] at (-5.26,-3.22) {2};
	\node[black, anchor=south west] at (-3.39,-3.22) {-2};
	\node[black, anchor=south west] at (-1.26,-3.22) {3};
	\node[black, anchor=south west] at (0.64,-3.22) {-3};
	\draw[draw=black, thin, solid] (-7.00,1.50) -- (-9.00,-2.50);
	\draw[draw=black, thin, solid] (-7.00,1.50) -- (-7.00,-2.50);
	\draw[draw=black, thin, solid] (-5.00,-2.50) -- (-4.00,1.50);
	\draw[draw=black, thin, solid] (-4.00,1.50) -- (-3.00,-2.50);
	\draw[draw=black, thin, solid] (-1.00,-2.50) -- (-1.00,1.50);
	\draw[draw=black, thin, solid] (-1.00,1.50) -- (1.00,-2.50);
	\draw[draw=black, thin, solid] (-10.00,0.00) -- (-10.00,2.00);
	\draw[draw=black, thin, solid] (0.50,2.00) -- (-10.00,0.00);
	\draw[draw=black, thin, solid] (-10.00,2.00) -- (-8.50,0.00);
	\draw[draw=black, thin, solid] (-8.50,0.00) -- (-8.50,2.00);
	\draw[draw=black, thin, solid] (-5.50,0.00) -- (-8.50,2.00);
	\draw[draw=black, thin, solid] (-5.50,0.00) -- (2.00,2.00);
	\draw[draw=black, fill=black, thin, solid] (-2.50,0.00) circle (0.1);
	\draw[draw=black, thin, solid] (-5.50,2.00) -- (-2.50,0.00);
	\draw[draw=black, thin, solid] (-2.50,0.00) -- (-2.50,2.00);
	\draw[draw=black, thin, solid] (-5.50,2.00) -- (2.00,0.00);
	\draw[draw=black, thin, solid] (2.00,0.00) -- (2.00,2.00);
	\draw[draw=black, thin, solid] (0.50,0.00) -- (0.50,2.00);
	\draw[draw=black, thin, solid] (-2.50,2.00) -- (0.50,0.00);
	\draw[draw=black, thin, solid] (-10.00,0.00) -- (-9.00,-2.50);
	\draw[draw=black, thin, solid] (-10.00,0.00) -- (1.00,-2.50);
	\draw[draw=black, thin, solid] (-8.50,0.00) -- (-5.00,-2.50);
	\draw[draw=black, thin, solid] (-8.50,0.00) -- (1.00,-2.50);
	\draw[draw=black, thin, solid] (-5.50,0.00) -- (-7.00,-2.50);
	\draw[draw=black, thin, solid] (-5.50,0.00) -- (-5.00,-2.50);
	\draw[draw=black, thin, solid] (-2.50,0.00) -- (-3.00,-2.50);
	\draw[draw=black, thin, solid] (-2.50,0.00) -- (-1.00,-2.50);
	\draw[draw=black, thin, solid] (0.50,0.00) -- (-9.00,-2.50);
	\draw[draw=black, thin, solid] (-3.00,-2.50) -- (0.50,0.00);
	\draw[draw=black, thin, solid] (2.00,0.00) -- (-1.00,-2.50);
	\draw[draw=black, thin, solid] (-7.00,-2.50) -- (2.00,0.00);
\end{tikzpicture}
    \caption{Poset $Y(\Sigma)_{\subsetneq \Sigma}$ of proper and non-empty convex subcomplexes of $\Sigma$.}
    \label{fig:convexSubcomplexes}
\end{figure}
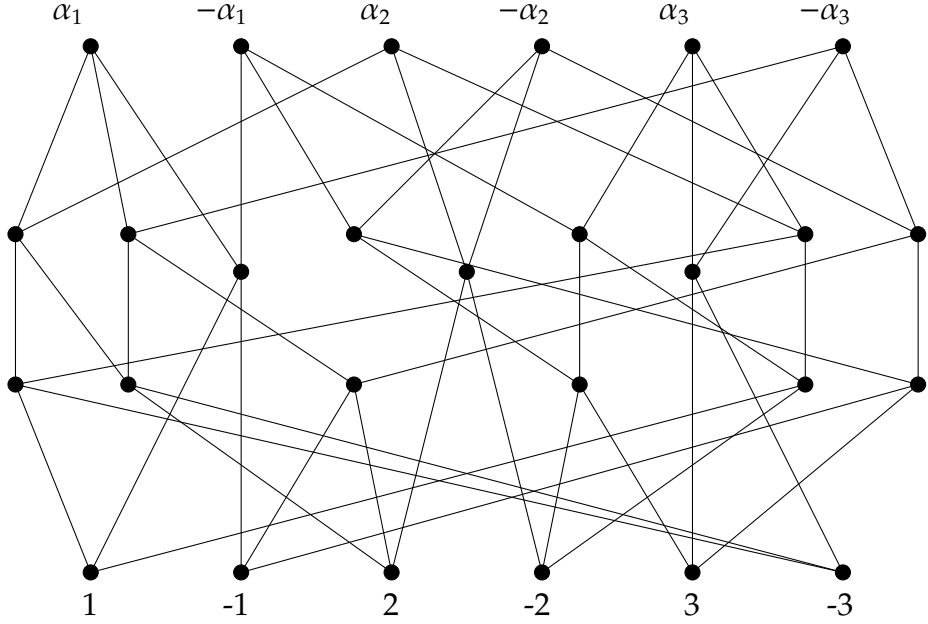

Consider the convex subcomplex $K = \alpha_1$.
As seen in Figure \ref{fig:lowerIntervalConvexKinA2}, $Y(\Sigma)_{\subsetneq K}$ has dimension two, but it collapses to the one-dimensional subposet obtained by keeping the points $\alpha_1\cap \alpha_2$, $\alpha_1\cap -\alpha_3$, $\alpha_1\cap-\alpha_1$ and $\alpha_1\cap\alpha_2\cap -\alpha_3$, $1$ and $-1$, which triangulates a one-dimensional sphere.
Therefore, $Y(\Sigma)_{\subsetneq K}$ is not spherical.

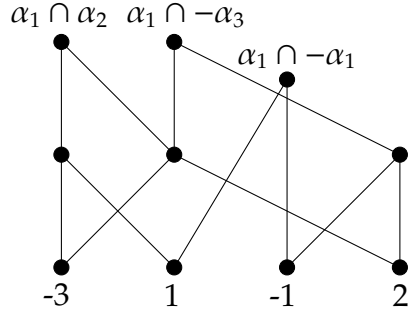
\begin{figure}
    \centering
    \begin{tikzpicture}
	\draw[draw=black, fill=black, thin, solid] (-10.00,1.50) circle (0.1);
	\draw[draw=black, fill=black, thin, solid] (-8.50,1.50) circle (0.1);
	\draw[draw=black, fill=black, thin, solid] (-7.00,1.00) circle (0.1);
	\draw[draw=black, fill=black, thin, solid] (-10.00,0.00) circle (0.1);
	\draw[draw=black, fill=black, thin, solid] (-8.50,0.00) circle (0.1);
	\draw[draw=black, fill=black, thin, solid] (-5.50,0.00) circle (0.1);
	\draw[draw=black, fill=black, thin, solid] (-10.00,-1.50) circle (0.1);
	\draw[draw=black, fill=black, thin, solid] (-7.00,-1.50) circle (0.1);
	\draw[draw=black, fill=black, thin, solid] (-5.50,-1.50) circle (0.1);
	\draw[draw=black, fill=black, thin, solid] (-8.50,-1.50) circle (0.1);
	\node[black, anchor=south west] at (-8.78,-2.16) {1};
	\node[black, anchor=south west] at (-7.38,-2.16) {-1};
	\node[black, anchor=south west] at (-5.74,-2.16) {2};
	\node[black, anchor=south west] at (-10.39,-2.16) {-3};
	\draw[draw=black, thin, solid] (-7.00,1.00) -- (-8.50,-1.50);
	\draw[draw=black, thin, solid] (-7.00,1.00) -- (-7.00,-1.50);
	\draw[draw=black, thin, solid] (-10.00,0.00) -- (-10.00,1.50);
	\draw[draw=black, thin, solid] (-10.00,1.50) -- (-8.50,0.00);
	\draw[draw=black, thin, solid] (-8.50,0.00) -- (-8.50,1.50);
	\draw[draw=black, thin, solid] (-5.50,0.00) -- (-8.50,1.50);
	\draw[draw=black, thin, solid] (-10.00,0.00) -- (-8.50,-1.50);
	\draw[draw=black, thin, solid] (-10.00,0.00) -- (-10.00,-1.50);
	\draw[draw=black, thin, solid] (-8.50,0.00) -- (-5.50,-1.50);
	\draw[draw=black, thin, solid] (-8.50,0.00) -- (-10.00,-1.50);
	\draw[draw=black, thin, solid] (-5.50,0.00) -- (-7.00,-1.50);
	\draw[draw=black, thin, solid] (-5.50,0.00) -- (-5.50,-1.50);
	\node[black, anchor=south west] at (-10.82,1.55) {$\alpha_1\cap \alpha_2$};
	\node[black, anchor=south west] at (-9.28,1.55) {$\alpha_1\cap -\alpha_3$};
	\node[black, anchor=south west] at (-7.80,1) {$\alpha_1\cap -\alpha_1$};
\end{tikzpicture}
    \caption{Lower interval $Y(\Sigma)_{\subsetneq K}$.}
    \label{fig:lowerIntervalConvexKinA2}
\end{figure}

Now suppose that $K$ is the Levi sphere $\alpha_1\cap-\alpha_1$.
As seen in Figure \ref{fig:convexSubcomplexes}, the upper interval $Y(\Sigma)_{\supsetneq K}$ is the poset $\{\alpha_1,-\alpha_1, \Sigma\}$ (of dimension one), and the lower interval $Y(\Sigma)_{\supsetneq K} = \{1, -1\}$ is discrete (of dimension zero).
Since the poset $Y(\Sigma)$ has dimension four, the ``correct" dimension for $\Lk_{\K(Y(\Sigma))}(K) = \K(Y(\Sigma)_{\subsetneq K}) * \K(Y(\Sigma)_{\supsetneq K})$ should be three, but in this case we get a complex of dimension two.
As $K$ has height one in $Y(\Sigma)$, the expected dimension of $Y(\Sigma)_{\supsetneq K}$ is $\dim Y(\Sigma) - 1 - 1 = 2$.
\end{example}

In Theorem \ref{thm:PDandCB}, we will see that $Y(\Delta)$ has the homotopy type of a poset of dimension $2\dim \Delta + 1$, so it cannot be Cohen-Macaulay.

\section{Levi spheres and decompositions}
\label{sec:leviSpheres}
~
Based on Theorem \ref{thm:decompAndLeviVectorSpaces}, we define the poset of decompositions of a building $\Delta$ as the poset of Levi spheres.
Recall, that a Levi sphere is an intersection of walls in some apartment (and hence in any apartment containing it).
In particular, apartments are Levi spheres.

\begin{definition}
\label{def:decompBuilding}
Let $\Delta$ be a spherical building.
We denote by $\D(\Delta)$ the poset of all non-empty Levi spheres, with order induced by reverse inclusion.
\end{definition}

Note that the empty subcomplex $\emptyset$ can be regarded as a Levi sphere, but we will not include it in $\D(\Delta)$.
We write $\overline{\D}(\Delta) = \D(\Delta)\cup \{\emptyset\}$, so $\emptyset$ is a unique maximal element for this poset.

We will analyze upper intervals in $\D(\Delta)$.
Indeed, these are related to hyperplane arrangements and parabolic subgroups of the underlying Weyl group, as we explain below.

If $\Sigma$ is the Coxeter complex associated with the Coxeter group $W$ with set of simple reflections given by $R$, we see that the Levi spheres contained in $\Sigma$ correspond to intersections of hyperplanes defined by the hyperplane arrangement associated with $(W,R)$, which we may denote by $\H(W,R)$.
Write $L(W,R)$ for the geometric lattice associated with the arrangement $\H(W,R)$, whose elements are intersections of hyperplanes in $\H(W,R)$ ordered by reverse inclusion.
By convention, the ``empty" intersection of hyperplanes is the full subspace spanned by the elements of $\H(W,R)$, which is the unique minimal element of the poset $L(W,R)$.
Thus, non-empty Levi spheres contained in $\Sigma$ are in one-to-one correspondence with non-maximal elements of $L(W,R)$.

On the other hand, a parabolic subgroup of $W$ is a $W$-conjugate of a subgroup of the form $\gen{I}$, where $I\subseteq R$.
We denote by $\P(W,R)$ the poset of parabolic subgroups of $W$ ordered by reverse inclusion.
Note that $W$ is the unique minimal element of $\P(W,R)$, and the trivial subgroup is the unique maximal element.
By \cite[Theorem 3.1]{BIhyperplanes}, $L(W,R) \cong \P(W,R)$.

Thus, we conclude:

\begin{proposition}
\label{prop:upperIntervalsDecompDelta}
Let $\Delta$ be a spherical building and let $\Sigma\in \A(\Delta)$ be an apartment.
Suppose that $\Sigma$ is the Coxeter complex associated with the Coxeter group $(W,R)$, where $R$ is a set of simple reflections.
Then
\[ \overline{\D}(\Delta)_{\geq \Sigma} \cong L(W,R) \cong \P(W,R) .\]

In particular, if $S$ is a Levi sphere of $\Delta$ contained in $\Sigma$, then $\overline{\D}(\Delta)_{\geq S}$ is an upper-interval of the geometric lattice $L(W,R)$.
Thus, $\D(\Delta)_{>S}$ is spherical of dimension $\dim S - 1$.
\end{proposition}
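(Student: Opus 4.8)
The plan is to turn the correspondence described just before the statement into an explicit poset isomorphism and then read off everything from the theory of geometric lattices. First I would fix the identification of $|\Sigma|$ with the unit sphere $\SS^{d}$, $d=\dim\Delta$, on which the reflection group $W$ acts; since $\Sigma$ triangulates all of $\SS^{d}$, this action is essential, so $\bigcap_{H\in\H(W,R)}H=\{0\}$ and $\operatorname{rank} L(W,R)=d+1$. Recalling that the walls of $\Sigma$ are exactly the traces on $\SS^{d}$ of the reflecting hyperplanes in $\H(W,R)$, and that a subcomplex of $\Sigma$ is a Levi sphere if and only if it is an intersection of walls (see Subsection~\ref{sub:convexity}, following \cite{Tits1,Serre}), I would define a map $\overline{\D}(\Delta)_{\geq\Sigma}\to L(W,R)$ by sending a non-empty Levi sphere $S'\subseteq\Sigma$ to its real linear span $\langle|S'|\rangle$ and sending $\emptyset$ to $\{0\}$. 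This is well defined — the span of the trace of a flat recovers that flat once the flat is positive-dimensional, and $\{0\}$ is the only $0$-dimensional flat in the essential case — it converts reverse inclusion of subcomplexes into reverse inclusion of subspaces, so it is order preserving as a map $\D(\Delta)\to L(W,R)$, and it is a bijection because every flat is an intersection of hyperplanes whose associated intersection of walls is a Levi sphere (or $\emptyset$). Composing with the isomorphism $L(W,R)\cong\P(W,R)$ from \cite[Theorem 3.1]{BIhyperplanes} yields the first statement.

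Next I would restrict to the interval above a Levi sphere $S\subseteq\Sigma$. A Levi sphere of $\Delta$ contained in $\Sigma$ is a Levi sphere of $\Sigma$, so the elements of $\overline{\D}(\Delta)_{\geq S}$ are precisely the Levi spheres of $\Sigma$ contained in $S$, together with $\emptyset$; under the isomorphism just built these are exactly the flats $x\geq x_S$ with $x_S=\langle|S|\rangle$. Hence $\overline{\D}(\Delta)_{\geq S}\cong L(W,R)_{\geq x_S}=[x_S,\hat{1}]$, which is again a geometric lattice, since an upper interval of a geometric lattice is geometric.

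Finally I would locate $\D(\Delta)_{>S}$ inside this interval and compute its rank. Because $\D(\Delta)$ does not contain $\emptyset$, deleting from $\overline{\D}(\Delta)_{\geq S}$ its minimum $S$ (which corresponds to $x_S$) and its maximum $\emptyset$ (which corresponds to the top $\hat{1}$) leaves exactly the open interval $(x_S,\hat{1})$, i.e.\ the proper part of the geometric lattice $[x_S,\hat{1}]$. Here $x_S$ has rank $\codim\langle|S|\rangle=(d+1)-(\dim S+1)=d-\dim S$, so $[x_S,\hat{1}]$ has rank $\dim S+1$; and since the order complex of the proper part of a geometric lattice of rank $r$ is a wedge of spheres of dimension $r-2$, it follows that $\D(\Delta)_{>S}$ is spherical of dimension $\dim S-1$, as claimed.

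There is no serious obstacle here: the whole argument is an assembly of the discussion preceding the statement, the Barcelo--Ihrig isomorphism, and standard facts about geometric lattices. The only point that needs real attention is the bookkeeping in the last step — making sure that excluding the empty subcomplex turns $\D(\Delta)_{>S}$ into the \emph{open} interval $(x_S,\hat{1})$, and checking that the essentiality of $W$ makes the rank of $[x_S,\hat{1}]$ equal to $\dim S+1$, so that the resulting wedge of spheres has dimension $\dim S-1$ rather than being off by one.
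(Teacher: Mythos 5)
Your argument is correct and follows essentially the same route the paper takes (the paper treats the proposition as a direct consequence of the preceding discussion): identify Levi spheres contained in $\Sigma$ with the flats of the reflection arrangement $\H(W,R)$, invoke the Barcelo--Ihrig isomorphism $L(W,R)\cong\P(W,R)$, and then use the standard facts that upper intervals of geometric lattices are geometric and that their proper parts are wedges of spheres of dimension $\mathrm{rank}-2$. Your explicit span map and the rank bookkeeping just make precise what the paper leaves implicit, so there is nothing to correct.
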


For lower intervals, we have the following description.

\begin{lemma}
\label{lem:lowerIntervalsDecompDelta}
Let $\Delta$ be a spherical building, and let $S\in \D(\Delta)$ be a Levi sphere.
Then $\D(\Delta)_{\leq S} \cong \D(\Lk_{\Delta}(\sigma))$, where $\sigma\in S$ is a maximal simplex.
The isomorphism is given by mapping $S'\in \D(\Delta)_{\leq S}$ to the Levi sphere obtained as the convex hull of $\proj_{\sigma}(\tau),\proj_{\sigma}(\tau')$ in $\Lk_{\Delta}(\sigma)$, where $S'$ is the convex hull of the opposite pair $(\tau,\tau')$.
\end{lemma}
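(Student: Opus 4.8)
The plan is to produce the inverse of the stated map by hand and check that both directions are order-preserving. Let $\sigma'\in S$ be the unique simplex of $S$ opposite to $\sigma$; it has dimension $\dim S=\dim\sigma$, and $\sigma,\sigma'$ are opposite in $\Delta$. The two standard facts I will use are: $(\mathrm a)$ for opposite simplices $\sigma,\sigma'$ of $\Delta$ the projection $\proj_{\sigma'}\colon\Lk_\Delta(\sigma)\to\Lk_\Delta(\sigma')$ is a dimension-preserving isomorphism with inverse $\proj_\sigma$; and $(\mathrm b)$ if $\tau,\tau'$ are opposite in $\Delta$ and $\sigma\in\Conv_\Delta(\tau,\tau')$, then $\proj_\sigma(\tau),\proj_\sigma(\tau')$ are opposite in $\Lk_\Delta(\sigma)$, while conversely, if $\rho,\rho'$ are opposite in $\Lk_\Delta(\sigma)$ then $\rho$ and $\proj_{\sigma'}(\rho')$ are opposite in $\Delta$. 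Both are in the circle of ideas behind Lemma~\ref{lm:bijectionApartmentsLinkOpposite} (cf.\ \cite[Proposition~2.1]{vH} and \cite{AB}). Write $\Phi(S')=\Lk_{S'}(\sigma)=S'\cap\Lk_\Delta(\sigma)$ for the stated map and $\Psi(\widetilde S)=\Conv_\Delta(\widetilde S\cup\{\sigma'\})$ for the candidate inverse, the simplices of $\widetilde S\subseteq\Lk_\Delta(\sigma)$ being read as simplices of $\Delta$ containing $\sigma$.

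First I would check that $\Phi$ is well defined and equals the map in the statement. Take a Levi sphere $S'\supseteq S$ and write $S'=\Conv_\Delta(\tau,\tau')$ for opposite chambers $\tau,\tau'$ of $S'$ (Lemma~\ref{lm:convexIsleviSphere}); then $\tau,\tau'$ are opposite in $\Delta$ and $\sigma\in S'=\Conv_\Delta(\tau,\tau')$. Since $S'$ is convex, $\Conv_\Delta(\sigma,\tau)=\Conv_{S'}(\sigma,\tau)$, so $\proj_\sigma(\tau)$ is the gate of the residue $\Lk_{S'}(\sigma)$ towards $\tau$ inside the chamber complex $S'$; hence $\proj_\sigma(\tau)$, and likewise $\proj_\sigma(\tau')$, is a chamber of $\Lk_{S'}(\sigma)$, of dimension $\dim\Lk_{S'}(\sigma)$. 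By $(\mathrm b)$ they are opposite in $\Lk_\Delta(\sigma)$, and $\Lk_{S'}(\sigma)$ is convex in $\Lk_\Delta(\sigma)$ by the link-of-a-convex-subcomplex computation used in the proof of Proposition~\ref{prop:intervalSphereYPoset}. Lemma~\ref{lm:convexIsleviSphere} then yields $\Lk_{S'}(\sigma)=\Conv_{\Lk_\Delta(\sigma)}(\proj_\sigma(\tau),\proj_\sigma(\tau'))$, a Levi sphere of $\Lk_\Delta(\sigma)$ independent of the choice of $(\tau,\tau')$; this is exactly the stated map. (For $S'=S$ one gets $\Lk_S(\sigma)=\{\sigma\}$, the empty Levi sphere of $\Lk_\Delta(\sigma)$, matching the top element $S$ of $\D(\Delta)_{\le S}$; so the codomain should be read as $\overline{\D}(\Lk_\Delta(\sigma))$, equivalently one restricts to $\D(\Delta)_{<S}\cong\D(\Lk_\Delta(\sigma))$.) Dually, for $\widetilde S=\Conv_{\Lk_\Delta(\sigma)}(\rho,\rho')$ one rewrites $\Psi(\widetilde S)=\Conv_\Delta(\rho,\rho',\sigma')=\Conv_\Delta(\rho,\proj_{\sigma'}(\rho'))$, using that $\sigma'$ is a face of $\proj_{\sigma'}(\rho')$ and that $\rho'=\proj_\sigma(\proj_{\sigma'}(\rho'))$ by $(\mathrm a)$; by $(\mathrm b)$ and Lemma~\ref{lm:convexIsleviSphere} this is a Levi sphere, and it contains $\sigma$ and $\sigma'$, hence contains $S=\Conv_\Delta(\sigma,\sigma')$, so $\Psi$ lands in $\D(\Delta)_{\le S}$.

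Next I would verify that $\Phi$ and $\Psi$ are mutually inverse. Applying the previous paragraph to $\Psi(\widetilde S)=\Conv_\Delta(\rho,\proj_{\sigma'}(\rho'))$ gives $\Phi(\Psi(\widetilde S))=\Conv_{\Lk_\Delta(\sigma)}(\proj_\sigma(\rho),\proj_\sigma(\proj_{\sigma'}(\rho')))=\Conv_{\Lk_\Delta(\sigma)}(\rho,\rho')=\widetilde S$ by $(\mathrm a)$, since $\proj_\sigma(\rho)=\rho$. Conversely, with $\widetilde S=\Lk_{S'}(\sigma)$ we have $\widetilde S\subseteq S'$ and $\sigma'\in S'$, so $\Psi(\Phi(S'))=\Conv_\Delta(\widetilde S\cup\{\sigma'\})\subseteq S'$ by convexity of $S'$; both $\Psi(\Phi(S'))$ and $S'$ are Levi spheres with link $\widetilde S$ at $\sigma$ (by $\Phi\Psi=\id$), hence of equal dimension, and a triangulated sphere cannot properly contain another of the same dimension, so $\Psi(\Phi(S'))=S'$. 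Finally $\Phi$ is inclusion-preserving and $\Psi(\widetilde S)=\Conv_\Delta(\widetilde S\cup\{\sigma'\})$ is monotone in $\widetilde S$, so both respect the reverse-inclusion orders; thus $\Phi$ is a poset isomorphism (and, all constructions being natural in $(\sigma,\sigma')$, it is equivariant for any group of simplicial automorphisms fixing $\sigma$ and $\sigma'$).

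The main obstacle is the identity $\Psi\Phi=\id$, i.e.\ the claim that a Levi sphere $S'\supseteq S$ is recovered from its link at the single maximal simplex $\sigma$. This is the Levi-sphere analogue of the apartment bijection in Lemma~\ref{lm:bijectionApartmentsLinkOpposite}, and it is genuinely not a formality: the parallel question for the root-restriction map $R_\sigma$, and hence for the poset $Y(\Delta)$, is explicitly left open after Proposition~\ref{prop:intervalSphereYPoset}. What rescues the situation here is that a Levi sphere, being an intersection of walls, is stable under the opposition of every apartment carrying it, so that $\proj_\sigma$ and $\proj_{\sigma'}$ really do invert one another on the relevant links; combined with the topological fact that two equidimensional triangulated spheres, one inside the other, coincide, this pins $S'$ down globally even though $S$ lies in many apartments. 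A minor nuisance is the boundary case $S'=S$ together with the paper's shifted link convention, handled by the parenthetical remark above.
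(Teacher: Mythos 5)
Your argument is correct, but note that the paper itself gives no argument here: its entire proof is the citation \cite[p.~199]{Serre}, so what you have written is essentially a self-contained reconstruction of the Serre-type argument that the paper outsources. Your route --- $\Phi(S')=\Lk_{S'}(\sigma)$, the explicit inverse $\Psi(\widetilde S)=\Conv_\Delta(\widetilde S\cup\{\sigma'\})=\Conv_\Delta\bigl(\rho,\proj_{\sigma'}(\rho')\bigr)$, convexity of $\Lk_{S'}(\sigma)$ by the same root computation as in the proof of Proposition~\ref{prop:intervalSphereYPoset}, recognition of Levi spheres via Lemma~\ref{lm:convexIsleviSphere}, and the sphere-inside-sphere rigidity giving $\Psi\Phi=\id$ --- is sound, and it has the merit of making the lemma independent of the external reference. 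Two points should be written out rather than asserted. First, your facts (a) and (b) are used for arbitrary simplices, whereas Lemma~\ref{lm:bijectionApartmentsLinkOpposite} (and \cite[Prop.~2.1]{vH}) literally covers chambers; the general case is standard (inside one apartment $\Sigma\in\A(\Delta,\sigma,\sigma')$ it follows from the identity $\op_{\Lk_\Sigma(\sigma)}=\proj_\sigma\circ\,\op_\Sigma$ on $\Lk_\Sigma(\sigma)$ together with (a)), but it needs a line of proof. Second, the phrase ``gate of the residue inside the chamber complex $S'$'' presupposes that a Levi sphere is a pure chamber complex whose gates agree with the ambient projections; what you actually need is that $\proj_\sigma(\tau)$, computed in $\Delta$, is a simplex of $S'$ of full dimension $\dim S'$, and this (equivalently, purity of Levi spheres) is true but is exactly the kind of statement the paper takes from \cite{Serre}, so justify it --- for instance via the fact that the simplices of $\Sigma$ lying in the subsphere $|S'|$ are precisely the faces of the restricted reflection arrangement, or geometrically via initial directions of geodesics from $|\sigma|$ toward an interior point of $|\tau|$. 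Finally, your parenthetical on the boundary case is the right reading of the statement: the clean form is $\D(\Delta)_{<S}\cong\D(\Lk_\Delta(\sigma))$, equivalently $\D(\Delta)_{\le S}\cong\overline{\D}(\Lk_\Delta(\sigma))$, and it is this strict version that the paper actually uses in the proof of Theorem~\ref{thm:DisCM}.
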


\begin{proof}
See \cite[p. 199]{Serre}.
\end{proof}

In Theorem \ref{thm:DisCM} we prove that $\D(\Delta)$ is Cohen-Macaulay.

Next, we define a ``partial decomposition poset" for buildings using our interpretation of decompositions in terms of Levi spheres, inspired by Theorems \ref{thm:decompAndLeviVectorSpaces} and \ref{thm:PDandPDBuildingVectorSpace}.

\begin{definition}
\label{def:PDbuildings}
Let $\Delta$ be a spherical building.
We define $\PD(\Delta)$ as the poset on the disjoint union of 
$\X(\Delta)$ and ${\D(\Delta)}$ with order relation $\preceq$
defined as follows. 
Among elements of $\X(\Delta)$ and among elements of $\D(\Delta)$ the
order $\preceq$ is the one inherited from the order on the 
respective posets.
If $\sigma \in \Delta$ and $S\in {\D(\Delta)}$, then we set $\sigma \prec S$ if and only if there exists an apartment $\Sigma$ of $\Delta$ such that $\sigma\in \Sigma$ and $S$ is a Levi sphere of $\Sigma$.
\end{definition}

It is not hard to see that $\PD(\Delta)$ with the relation $\preceq$
is indeed a poset.

\begin{remark}
[$\CB$ and $\PD$ for arbitrary posets]
In \cite{BPW24}, the authors propose a definition of the common bases complex and the partial decomposition poset for rather arbitrary posets.

According to \cite[Definition 2.1]{BPW24}, to define the poset $\PD$ and the complex $\CB$ associated with a poset $\P$, one must first select a family of \textit{frames} of  $\P$.
For example, following \cite{BPW24}, a maximal simplex for the common bases complex associated with $\P$ and the fixed family of frames, is a set whose elements are all possible joins of proper and non-empty subsets of a frame.

When one starts with the poset of subspaces of a finite-dimensional vector space $V$, then the definitions in \cite{BPW24}
yield the poset of partial direct sum decompositions from Definition \ref{def:DandPDVectorSpaces} 
and the common basis complex from Definition \ref{def:CBVectorSpaces}.

In the more general context of buildings, apartments play the 
role of the maximal simplices in the common bases complex.
However, we do not see how one can extend the definitions
from \cite{BPW24} and define the concept of a frame
within an apartment, as this is now an abstract Coxeter complex.
Therefore, the notion of ``frame" as a certain ``generating set of vertices" is less clear in the context of buildings.
Similarly, we do not see how to use the definition from \cite{BPW24}
to define a partial decomposition poset for buildings.
\end{remark}

The main result, Theorem 2.9, of \cite{BPW24} says that under mild hypotheses their common bases complex and their poset of partial decompositions 
are homotopy equivalent.
In the context of buildings, we also prove that the common bases complex and the partial decomposition poset are homotopy equivalent (cf. Theorem \ref{thm:PDandCBVectorSpaces}).

Recall from Lemma \ref{lm:CBequivYDelta} that the inclusion $Y(\Delta)\hookrightarrow \X(\CB(\Delta))$ is an equivariant homotopy equivalence.
Also, $Y(\Delta)$ is the poset of non-empty convex subcomplexes of $\Delta$ that lie in some apartment, ordered by inclusion (see Definition \ref{def:convexSubcomplexes}).
In what follows, $\PD(\Delta)_{\prec \emptyset} = \PD(\Delta)$ and
$Y(\Delta)_{\supseteq \emptyset} = Y(\Delta)$.

\begin{theorem}
\label{thm:PDandCB}
Let $\Delta$ be a spherical building and let $H$ be a group acting on $\Delta$ by simplicial automorphisms.
Suppose that $S$ is a Levi sphere invariant under $H$ (possibly $S = \emptyset$).

Then $\Gamma:(\PD(\Delta)_{\prec S})' \to Y(\Delta)_{\supseteq S}$ given by $\Gamma(c) = \Conv_{\Delta}(c,S)$ is a well-defined order-preserving $H$-equivariant map.

Moreover, the following hold:

\begin{enumerate}
    \item If $S = \emptyset$ then $\Gamma$ induces a homotopy equivalence $(\PD(\Delta)')^H \to Y(\Delta)^H$.
    In particular, $\Gamma$ gives rise to a homotopy equivalence
    \[ \PD(\Delta)\simeq Y(\Delta)\simeq \CB(\Delta),\]
    which is $H$-equivariant if $H$ is a compact Lie group, and to an
    $H$-equivariant isomorphism 
    \[ \widetilde{H}_*(\PD(\Delta))\cong \widetilde{H}_*(\CB(\Delta)).\]
    \item $\PD(\Delta)^H_{\prec S} \simeq S^H * Y(\Delta)^H_{\supsetneq S}$.
    \item If $\sigma\in S$ is a simplex of maximal dimension, then $\PD(\Delta)_{\prec S} \simeq |S| * Y(\Lk_{\Delta}(\sigma))$.
\end{enumerate}
\end{theorem}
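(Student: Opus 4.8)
The plan is to analyse the single map $\Gamma$ together with its fixed-point restrictions $\Gamma_H\colon (\PD(\Delta)^H_{\prec S})'\to Y(\Delta)^H_{\supseteq S}$ (using $(X')^H=(X^H)'$), via Quillen's fiber theorem when $S=\emptyset$ and via the wedge decomposition in general. First one checks that $\Gamma$ is well defined: if $c$ is a non-empty chain in $\PD(\Delta)_{\prec S}$, every element of $c$ is $\preceq S$, so taking the largest simplex and the smallest Levi sphere occurring in $c$ and unwinding the definition of $\prec$ (recall a Levi sphere is a Levi sphere of every apartment containing it) yields one apartment $\Sigma$ containing $S$ and all elements of $c$; hence $\Gamma(c)=\Conv_\Delta(c,S)=\Conv_\Sigma(c,S)$ is a convex subcomplex of an apartment containing $S$, i.e.\ lies in $Y(\Delta)_{\supseteq S}$, and monotonicity and $H$-equivariance are clear. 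The crucial observation is that the fibers over lower intervals are \emph{joins}: for $K\in Y(\Delta)^H_{\supseteq S}$, since $K$ is convex and $H$-invariant, $\Conv_\Delta(c,S)\preceq K$ iff every element of $c$ is contained in $K$, so $\Gamma_H^{-1}\big((Y(\Delta)^H_{\supseteq S})_{\preceq K}\big)$ is the order complex of
\[
Q_K^H \;=\; (\X(K))^H \;\ojoin\; L_K^H, \qquad L_K^H:=\{\,H\text{-invariant Levi spheres }S' : S\subsetneq S'\subseteq K\,\};
\]
it is a genuine join because each $S'\in L_K$ is a Levi sphere of every apartment containing $K$ and each simplex of $K$ lies in such an apartment, so $\sigma\prec S'$ for all $\sigma\in\X(K)$, $S'\in L_K$. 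Thus $|Q_K^H|\simeq K^H * |L_K^H|$ with $K^H=|(\X(K))^H|$.

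Next one shows this fiber is contractible for every $K\ne S$, while over the minimum $K=S$ it equals $S^H$ (there $L_S^H=\emptyset$). For $K\supsetneq S$: the intersection of all walls containing $K$ is the unique largest Levi sphere inside $K$ (any Levi sphere inside a root lies in the corresponding wall, being opposition-invariant), it is $H$-invariant and it contains $S$; if it strictly contains $S$ it is the minimum of $L_K^H$, so $L_K^H$ — hence the fiber — is contractible, which in particular settles the case that $K$ is itself a Levi sphere. If instead that largest Levi sphere equals $S$ and $K\ne S$, then $L_K^H=\emptyset$ and $K$ is not a Levi sphere, hence not completely reducible by Lemma \ref{lm:convexIsleviSphere}; a fixed-point refinement of Serre's contractibility criterion — in the spirit of Lemma \ref{lm:contractibleIntersectionApartments}, contracting $K^H$ radially along $\CAT(1)$-geodesics toward an $H$-fixed point of $|K|$ having no opposite in $|K|$ — shows $K^H$, and hence the fiber, is contractible.

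From here parts (1)--(3) follow. For (1) put $S=\emptyset$; then $\emptyset\notin Y(\Delta)$, the exceptional value $K=S$ never arises, and \emph{all} fibers of each $\Gamma_H$ over lower intervals are contractible, so Quillen's fiber theorem (Theorem \ref{thm:quillen}) makes each $\Gamma_H$ a homotopy equivalence; composing with the equivalence $\Conv_\Delta\colon\X(\CB(\Delta))\xrightarrow{\ \sim\ }Y(\Delta)$ of Lemma \ref{lm:CBequivYDelta} gives $\PD(\Delta)\simeq Y(\Delta)\simeq\CB(\Delta)$, which upgrades to a $G$-equivariant equivalence for $G$ a compact Lie group by equivariant Whitehead (Theorem \ref{thm:whitehead}) and to an $H$-equivariant homology isomorphism in general (an equivariant, fixed-point-wise equivalence). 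For (2) apply the wedge decomposition (Theorem \ref{thm:wedgeDecomposition}) to $\Gamma_H$: its hypothesis holds because every fiber over $\preceq K$ with $K\ne S$ is contractible and the fiber over the minimum $K=S$ receives only the empty inclusion; since $Y(\Delta)^H_{\supseteq S}$ has a minimum it is contractible, every summand indexed by $K\ne S$ is a join with a contractible factor, and only the $K=S$ summand survives, giving $\PD(\Delta)^H_{\prec S}\simeq S^H * Y(\Delta)^H_{\supsetneq S}$. Finally (3) is the case $H=1$ of (2) combined with $Y(\Delta)_{\supsetneq S}\simeq Y(\Lk_\Delta(\sigma))$ from Proposition \ref{prop:intervalSphereYPoset}.

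The step I expect to be the main obstacle is the last case of the fiber analysis: proving $K^H$ contractible — equivalently, that $K=S$ is the \emph{only} value of $K$ over which the fiber of $\Gamma_H$ fails to collapse — when $K$ is a convex $H$-invariant subcomplex of an apartment that is not a Levi sphere and whose maximal Levi sphere is exactly $S$. This is the fixed-point strengthening of Serre's ``not completely reducible $\Rightarrow$ contractible'' advertised in the introduction; the delicate point is that $K^H$ is only a convex subset of a round sphere, so one must first exclude that it is a subsphere and then contract it along the unique $\CAT(1)$-geodesics from a suitable $H$-fixed basepoint, exactly as in the proof of Lemma \ref{lm:contractibleIntersectionApartments}.
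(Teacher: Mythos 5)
Your overall architecture coincides with the paper's: the same map $\Gamma$, the same identification of the fiber over $K\in Y(\Delta)^H_{\supseteq S}$ as (the subdivision of) the join $\X(K)^H\ojoin \D_K^H$, Quillen's fiber theorem for item (1), the wedge decomposition for item (2), and Proposition \ref{prop:intervalSphereYPoset} for item (3). Your case (a) is a correct mild variant of the paper's first case: the paper cones off $\D_K^H$ only when $K$ is itself a Levi sphere, while you cone it off at the largest Levi sphere inside $K$, namely $\bigcap_{\alpha\in\roots_{\Sigma}(K)}\partial\alpha$ (your parenthetical justification that a Levi sphere inside a root lies in its wall is exactly right; note it is the walls of the roots \emph{containing} $K$ that you intersect, not ``walls containing $K$'').

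However, the step you flag as the expected obstacle is a genuine gap, and it is the heart of the paper's proof: showing $K^H$ is contractible when $K\supsetneq S$ is a convex $H$-invariant subcomplex of an apartment containing no pair of opposite simplices of dimension $\dim K$. You propose to contract $K^H$ along $\CAT(1)$ geodesics toward ``an $H$-fixed point of $|K|$ having no opposite in $|K|$'', but you never produce such a point, and Lemma \ref{lm:contractibleIntersectionApartments} cannot supply it: that lemma needs an $H$-fixed \emph{simplex} with no opposite in the given set, and none is available (when $S=\emptyset$ --- exactly the case needed for item (1) --- there is no distinguished $H$-invariant simplex of $K$; when $S\neq\emptyset$ its simplices do have opposites inside $K$, namely inside $S$). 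The paper fills this with a metric argument: $C=|K|$ is a proper closed convex subset with nonempty interior of the sphere $|\widetilde{K}|$, where $\widetilde{K}$ is the smallest Levi sphere containing $K$ (properness is where ``not completely reducible'' enters, via Lemma \ref{lm:convexIsleviSphere}); by \cite[Corollary 2.7]{ramoscuevas} there is a \emph{unique} point $x_0\in C$ maximizing the distance to $\partial C$, and it satisfies $\sup_{x\in C}d(x,x_0)\leq \pi/2$. Uniqueness makes $x_0$ an $H$-fixed point, the $\pi/2$ bound excludes an opposite of $x_0$ in $C$, and the unique geodesics from points of $C$ to $x_0$ lie in $C$ by convexity and, for $H$-fixed points, in $C^H$ by uniqueness, contracting $C^H$. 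Without this (or an equivalent canonical-center construction) all three items remain unproved. A secondary, fixable point: when $S^H=\emptyset$ the hypothesis of Theorem \ref{thm:wedgeDecomposition} fails at the minimum $K=S$ (the fiber there is empty, so no gluing point exists); the paper treats this case by applying Theorem \ref{thm:quillen} to $\Gamma_H$ with codomain $Y(\Delta)^H_{\supsetneq S}$, which still gives item (2) under the convention that a join with the empty space is the other factor.
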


\begin{proof}
First, we show that $\Gamma$ is well-defined.
Note that if $c\in \PD(\Delta)'$, then there is an apartment $\Sigma$ containing all the vertices involved in $c$ by definition of $\PD(\Delta)$.
In particular, the convex hull of $c$ is a convex subcomplex of $\Sigma$, so it lies in $Y(\Delta)$.
If in addition the maximal element of $c$, say $x$, satisfies $x\prec S$, we can take $\Sigma$ containing $S$ by definition of the ordering in $\PD(\Delta)$, and hence $\Gamma(c) = \Conv_{\Delta}(c,S)\in Y(\Delta)_{\supseteq S}$.
Thus $\Gamma$ is an order-preserving map between posets.
Also, as pointed out at the end of Subsection \ref{subsub:chambercomplexes}, $\Gamma$ is $H$-equivariant.

Let $Y := Y(\Delta)_{\supseteq S}$, and let $X := (\PD(\Delta)_{\prec S})'$.
From now on, we work with the induced map $\Gamma: X\to Y$, and we also denote $\Gamma_H:X^H\to Y^H$ the map induced on the fixed-point subposets.

We use Quillen's fiber theorem to prove that $\Gamma_H$ is a homotopy equivalence.
Let $K\in Y^H$, so $K$ is a convex subcomplex of some apartment $\Sigma$.
We prove that $\Gamma_H^{-1}( Y^H_{\subseteq K} )$ is contractible.

Now, note that $\Gamma(c) \subseteq K$ if and only if every simplex involved in $c$ is a simplex of $K$.
Therefore, the $\Delta$-part of the chain $c \subseteq \X(\Delta) \cup {\D(\Delta)}$ is a set of simplices of $K$, and the $\D(\Delta)$-part of $c$ is a set of Levi spheres also completely contained in $K$ but that properly contain $S$.
Write
\[ \D_K := \{ T \in {\D(\Delta)_{\prec S}} \tq T\subseteq K\}. \]
Then $\D_K$ is the set of Levi spheres of $\Sigma$ that are contained in $K$ and properly contain $S$.
This shows that
\[\Gamma^{-1}(Y_{\subseteq K}) = \big(\X(K) \ojoin \D_K\big)'.\]
Next, a chain $c\in (\X(K) \ojoin \D_K)'$ is fixed by $H$ if and only if each element of $c$ is $H$-invariant. That is,
\begin{equation}
\label{eq:preimageGammaFixPoints}
\Gamma^{-1}_H(Y_{\subseteq K}^H) = \Gamma^{-1}(Y_{\subseteq K})^H = \left(\X(K)^H \ojoin \D_K^H\right)',
\end{equation}
where $\X(K)^H$ is the poset of non-empty simplices of $K$ fixed by $H$ (but maybe not pointwise fixed), and $\D_K^H$ consists of the $H$-invariant Levi spheres of $\Sigma$ that are contained in $K$ and properly contain $S$.
Thus, it is enough to prove that either $K^H$ or $\D_K^H$ is contractible.
Recall that $|K^H| = | (K^H)'| = |\X(K)^H|$.

Suppose first that we take $K = S$, so in particular $S\neq\emptyset$.
Then $\D_K = \emptyset$, and
\[ \Gamma^{-1}_H(Y^H_{\subseteq K}) = \Gamma^{-1}(S)^H = \X(S)^H.\]
This is not a contractible poset in general, but its dimension is at most $\dim S$.

Now suppose that $K\supsetneq S$.
We prove that in any case, the preimage $\Gamma^{-1}_H(Y^H_{\subseteq K})$ is contractible.
We split the analysis of the homotopy type of the preimage into two cases:

\medskip
\noindent
\textbf{Case 1.} If $K$ contains a pair of maximal opposite simplices $\sigma,\sigma'$, then $\D_K^H$, and also $\Gamma^{-1}_H(Y^H_{\subseteq K})$, are contractible.

\medskip
\noindent {\it Proof.}
Since $K$ lies in an apartment $\Sigma$, the simplex $\sigma$ has exactly one opposite in $K$.
Thus $K = \Conv_{\Sigma}(\sigma,\sigma')$ by Lemma \ref{lm:convexIsleviSphere}.
In particular, $K$ is a unique minimal element of $\D_K^H$, so it is a contractible poset.
This proves that the preimage in Equation \eqref{eq:preimageGammaFixPoints} is a contractible poset.
This case follows.    
\noindent 
$\Box$

\bigskip
\noindent
\textbf{Case 2.} If $K$ does not contain a pair of maximal opposite simplices, then $\X(K)^H$, and so $\Gamma^{-1}_H(Y^H_{\subseteq K})$, are contractible.

\medskip
\noindent {\it Proof.}
By \cite[Théorème 2.1]{Serre}, $K$ is a contractible convex subcomplex.
We claim that $K^H$ is also contractible.

Let $\Sigma$ be an apartment containing $K$.
Regard the geometric realization of $\Sigma$ as the unit sphere via the usual CAT(1) metric, denoted by $\dcat$ (see Subsection \ref{subsub:cat1metric}).
Recall that $K$, being a convex subcomplex of $\Sigma$, is obtained by intersecting roots.
In particular, we can find a sphere $\mathbb{S}^m$ such that $m = \dim K$ and $|K|\subseteq \mathbb{S}^m \subseteq |\Sigma|$.
Indeed, $\mathbb{S}^m = |\tilde{K}|$ where $\tilde{K}$ is the intersection of all walls in $\Sigma$ containing $K$ (so $\widetilde{K}$ is a Levi sphere).
Let $C = |K|$.
Then $C$ is a proper closed convex subset of $|\tilde{K}| = \mathbb{S}^m$ with non-empty interior (as they have the same dimension).
By \cite[Corollary 2.7]{ramoscuevas}, $C$ has a unique point $x_0\in C$ such that
\begin{equation}
\label{eq:capx0}
d(x_0, \partial C) = \sup_{x\in C} d(x,\partial C),
\end{equation}
and
\begin{equation}
\label{eq:distancex0}
    \sup_{x\in C} d(x,x_0) \leq \frac{\pi}{2}.
\end{equation}
Since $x_0$ is uniquely defined in terms of the metric property Equation \eqref{eq:capx0}, and $H$ acts by isometries on $C$, we see that $x_0$ is fixed by $H$, so $x_0\in C^H$.
In addition, $C$ cannot contain a point opposite to $x_0$ by Equation \eqref{eq:distancex0} (recall that opposite points are at distance $\pi$).
Thus, for every $y\in C$, there is a unique geodesic joining $y$ with $x_0$, and such a geodesic must be contained in $C$ since $K$ is a convex subcomplex.
In particular, if $y\in C^H$, then by uniqueness, such a geodesic must be completely contained in $C^H$.
Hence, $|K^H| = C^H$ is contractible.
By Equation \eqref{eq:preimageGammaFixPoints}, the preimage $\Gamma^{-1}_H( Y^H_{\subseteq K})$ is contractible.
This finishes the proof of Case 2.
\noindent 
$\Box$

\bigskip

Now we prove the assertions (1) - (3).
The first part of Item (1) follows from Quillen's fiber Theorem \ref{thm:quillen}(2).
The ($H$-equivariant) homotopy equivalences $\PD(\Delta)\simeq Y(\Delta)\simeq \CB(\Delta)$ then follow from Theorem \ref{thm:whitehead} and Lemma \ref{lm:CBequivYDelta}.
The homology isomorphism is induced by $\Gamma$, and it is $H$-equivariant since $\Gamma$ is.

Item (3) follows from item (2) by taking $H = 1$ and Proposition \ref{prop:intervalSphereYPoset}.

Thus, it remains to prove item (2).
We can assume that $S\neq\emptyset$ since the case $S = \emptyset$ is item (1).
We show that
\begin{equation}
\label{eq:PDandJoin}
\PD(\Delta)_{\prec S}^H \simeq S^H * Y(\Delta)^H_{\supsetneq S}.
\end{equation}
First, note that $S$ is the unique minimal element of $Y^H$.
Next, observe that for $K\in Y^H$, $K\neq S$, the map
\[ \Gamma^{-1}_H(Y^H_{\subsetneq K}) \hookrightarrow \Gamma^{-1}_H(Y^H_{\subseteq K})\]
is null-homotopic by Cases 1 and 2 since the codomain of this inclusion is a contractible space.
For the preimage of the minimal element $S\in Y^H$, we have
\[ \Gamma^{-1}_H(S) = S^H.\]
If this preimage is non-empty, the homotopy equivalence in Equation \eqref{eq:PDandJoin} follows from Theorem \ref{thm:wedgeDecomposition}.

If $S^H$ is empty, then we consider instead $\Gamma_H : X^H \to Y(\Delta)^H_{\supsetneq S}$, and every preimage of a lower interval $Y^H_{\subseteq K} \cap Y^H_{\supsetneq S}$ is still contractible (as the preimage of $S$ does not contribute).
In that case, $\Gamma_H$ is a homotopy equivalence by Theorem \ref{thm:quillen}, so Equation \eqref{eq:PDandJoin} holds.
This concludes the proof of item (2).
\end{proof}

\section{An ordered partial decomposition poset for buildings}
\label{sec:opd}

In this section, we define the poset $\OPD(\Delta)$ of ``ordered partial decompositions" of a spherical building $\Delta$.
This poset does not resemble the poset of ordered partial decompositions in the case $\Delta$ is the building of a finite-dimensional vector space, but it does so up to equivariant homotopy, as we will see later.
This homotopy equivalence is the motivation for the definition we give here.

\begin{definition}
\label{def:ODandOPDbuildings}
Let $\Delta$ be a spherical building.
We define the posets:
\begin{itemize}
\item $\OD(\Delta) := \Opp(\Delta) = \{ (\sigma,\sigma')\tq \sigma,\sigma'$ are opposite simplices$\}$, with order relation 
$(\sigma,\sigma') \leq (\tau,\tau')$ if $\tau\subseteq \sigma$ and $\tau'\subseteq \sigma'$.
\item $\OPD(\Delta) = \X(\Delta) \cup \Opp(\Delta)$ with order relation $\preceq$ defined as follows. Among elements of $\X(\Delta)$ and 
among elements of $\Opp(\Delta)$ the order $\preceq$ is inherited from the order on the respective posets. If $\sigma \in \X(\Delta)$ and
$(\tau,\tau') \in \Opp(\Delta)$ then we set 
$\sigma \prec (\tau,\tau')$ if there is an apartment containing $\sigma,\tau$, and $\tau'$.
\end{itemize}
We call $\OD(\Delta)$ the ordered decomposition poset associated with $\Delta$, and $\OPD(\Delta)$ the ordered partial decomposition poset.
\end{definition}

Note that $\OD(\Delta)$ has dimension $\dim \Delta$.
The poset $\OD(\Delta)^\op$ was first introduced by Lehrer and Rylands in \cite{LR} under the name split building when $\Delta = \Delta(\GG,k)$ is the building of a connected reductive group $\GG$ defined over a field $k$  (cf. Definition \ref{def:DecompPosetsAlgebraicGroups}).
In \cite{LR} it is proved that $\OD(\Delta(\GG,k))$ is spherical when $\GG(k)$ is a classical group, and conjectured that this holds 
for an arbitrary connected reductive group $\GG$.
This conjecture was settled by von Heydebreck in the more general context of buildings \cite{vH}.

\begin{theorem}
[von Heydebreck]
\label{thm:vonHeydebreck}
Let $\Delta$ be a spherical building of dimension $n$.
Then $\OD(\Delta)$ is Cohen-Macaulay of dimension $n$.
\end{theorem}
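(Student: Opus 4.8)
The natural approach is induction on $n=\dim\Delta$, via the standard recursive criterion for Cohen--Macaulayness of order complexes: for a pure poset $P$, the complex $\K(P)$ is Cohen--Macaulay of dimension $\dim P$ precisely when each open interval of $\widehat P=P\cup\{\hat 0,\hat 1\}$ is spherical of its own dimension --- equivalently, when $P$ itself is $(\dim P-1)$-connected and, for all $x<y$ in $P$, each lower set $P_{<y}$, each upper set $P_{>x}$, and each proper interval $(x,y)$ is spherical of its own dimension. (The base case $n=0$ is immediate: $\OD(\Delta)$ is then a nonempty discrete poset, hence Cohen--Macaulay of dimension $0$.)

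First I would pin down the combinatorial shape of $\OD(\Delta)$. The cover relations above $(\sigma,\sigma')$ are exactly the pairs $(\sigma\cup\{w\},\,(\sigma\cup\{w\})')$ obtained by adjoining one vertex; and, using that in a spherical building every simplex has an opposite together with the fact that $\Lk_\Delta(\sigma)\cong\Lk_\Delta(\sigma')$ is again a spherical building for opposite $\sigma,\sigma'$ (so a chosen opposite of $\sigma$ extends to an opposite of any chamber containing $\sigma$), one checks that no maximal chain gets stuck: every minimal element is a pair of opposite chambers and every maximal element a pair of opposite vertices. Hence $\OD(\Delta)$ is pure of dimension $n$, with height function $h\big((\sigma,\sigma')\big)=n-\dim\sigma$.

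Next comes the interval analysis, which for every interval except $\OD(\Delta)$ itself is routine. A proper open interval between $(\tau,\tau')$ and $(\sigma,\sigma')$ with $\sigma\subsetneq\tau$ is, via the fact that inside the Levi sphere $\Conv_\Delta(\tau,\tau')$ the second coordinate is determined by the first, isomorphic to the poset of proper nonempty subsets of $\tau\setminus\sigma$; its order complex is $\sd\,\partial\Delta^{\,m-1}$ with $m=\dim\tau-\dim\sigma$, a sphere of the correct dimension $m-2$. Likewise $\OD(\Delta)_{>(\sigma,\sigma')}\cong\{\rho\tq\emptyset\neq\rho\subsetneq\sigma\}$ is the face poset of $\partial\sigma$, a $(\dim\sigma-1)$-sphere, again the right dimension. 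Finally $\OD(\Delta)_{<(\sigma,\sigma')}\cong\OD(\Lk_\Delta(\sigma))$: a pair below $(\sigma,\sigma')$ has the form $(\sigma\cup\bar\rho,\ \sigma'\cup\bar\rho')$ with $\bar\rho\in\Lk_\Delta(\sigma)$ nonempty, and by the compatibility of opposition with passage to the link (the simplex-level analogue of Lemma~\ref{lm:bijectionApartmentsLinkOpposite}, cf.\ Lemma~\ref{lem:lowerIntervalsDecompDelta}), $\sigma\cup\bar\rho$ is opposite to $\sigma'\cup\bar\rho'$ in $\Delta$ iff $\bar\rho$ is opposite to the image of $\bar\rho'$ in $\Lk_\Delta(\sigma)$. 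Since $\dim\Lk_\Delta(\sigma)<n$, the induction hypothesis makes $\OD(\Lk_\Delta(\sigma))$ Cohen--Macaulay, in particular spherical of the required dimension $n-\dim\sigma-1$.

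The one remaining point --- and the genuine heart of the theorem --- is that $\OD(\Delta)$ is itself $(n-1)$-connected. Here I would use that for an apartment $\Sigma$ the pair is determined by its first coordinate, so $\K(\OD(\Sigma))\cong\sd|\Sigma|$ is an $n$-sphere, and that for two apartments $\OD(\Sigma_1)\cap\OD(\Sigma_2)$ is the face poset of $(\Sigma_1\cap\Sigma_2)\cap\op_{\Sigma_1}(\Sigma_1\cap\Sigma_2)$, an $\op_{\Sigma_1}$-invariant convex subcomplex of $\Sigma_1$, hence a Levi sphere by Serre's complete-reducibility criterion (Lemma~\ref{lm:convexIsleviSphere}). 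One then wants to assemble $\K(\OD(\Delta))=\bigcup_\Sigma\K(\OD(\Sigma))$ from these $n$-spheres along a Solomon--Tits-style enumeration of the apartments and conclude, by Mayer--Vietoris and van Kampen, that $\OD(\Delta)$ is a wedge of $n$-spheres. The obstacle is that the successive attaching loci $\OD(\Sigma_i)\cap\bigcup_{j<i}\OD(\Sigma_j)$ are unions of Levi spheres inside $\Sigma_i$ and need not be contractible, so the enumeration has to be chosen carefully and the connectivity of these loci controlled by an auxiliary argument on the Coxeter arrangement of $\Sigma_i$; an alternative is to bypass the gluing and exhibit a shelling of $\K(\OD(\Delta))$, whose facets correspond to pairs consisting of a complete flag of faces of a chamber together with an opposite chamber. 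Making this last step work is where essentially all of von Heydebreck's effort goes, and it is the step I expect to be hardest.
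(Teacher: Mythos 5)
First, a point of comparison: the paper does not prove this statement at all --- Theorem \ref{thm:vonHeydebreck} is imported verbatim from von Heydebreck's article \cite{vH}, and the surrounding text (Lemma \ref{lm:intervalsBuildingCase}) only restates her interval computations. So there is no internal proof to measure you against; your proposal has to stand on its own as a proof of the cited result.

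Judged that way, it has a genuine gap, and you have located it yourself. Your reduction via the recursive Cohen--Macaulay criterion is fine, and the interval analysis is essentially the content of Lemma \ref{lm:intervalsBuildingCase}: purity with minimal elements the pairs of opposite chambers, $\OD(\Delta)_{>(\sigma,\sigma')}$ the face poset of $\partial\sigma$ (using that inside an apartment containing $\sigma,\sigma'$ the second coordinate is forced to be $\op_\Sigma$ of the first, since opposite simplices are opposite in every apartment containing both), and $\OD(\Delta)_{<(\sigma,\sigma')}\cong\OD(\Lk_\Delta(\sigma))$, handled by induction. But all of this only reduces the theorem to the statement that $\OD(\Delta)$ itself is $(n-1)$-connected, and that statement \emph{is} the theorem --- it is exactly where the sphericity of the split building (Lehrer--Rylands' conjecture) lives, and it does not follow from the interval data. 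For this step you offer only a speculative strategy: glue the $n$-spheres $\K(\OD(\Sigma))$ over apartments by a Solomon--Tits-style enumeration, or find a shelling, while explicitly conceding that the attaching loci $\OD(\Sigma_i)\cap\bigcup_{j<i}\OD(\Sigma_j)$ are unions of Levi spheres (correctly identified via Lemma \ref{lm:convexIsleviSphere}) that need not be contractible and that you do not know how to control them. No shelling of this complex is produced, no ordering of apartments with controlled intersections is produced, and no alternative connectivity argument is given. So the proposal is an honest reduction plus an acknowledged hole at the decisive step; to close it you would either have to reproduce von Heydebreck's connectivity argument from \cite{vH} or supply a genuinely new one, and as written the proof is incomplete.
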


The following lemma describes the intervals of the poset $\OD(\Delta)$, which was first done in \cite{vH}.
Recall that for two simplices $\sigma,\tau\in \Delta$, the projection $\proj_{\sigma}(\tau)$ is the unique maximal element in the convex hull of $\{\sigma,\tau\}$ that contains $\sigma$.
In particular, we have a simplicial map $\proj_{\sigma}:\Delta\to \Lk_{\Delta}(\sigma)$.
See \cite[2.29 \& 3.19]{Tits1}.

\begin{lemma}
\label{lm:intervalsBuildingCase}
Let $\Delta$ be a spherical building, and let $(\sigma,\sigma')\in \OD(\Delta)$.
Then the following hold:
\begin{enumerate}
\item There is an isomorphism of posets $\OD(\Delta)_{>(\sigma,\sigma')} \to \{ \tau \subsetneq \sigma \tq \emptyset\neq \tau \}^{\op}$ given by $(\tau,\tau')\mapsto \tau$.
In particular, $\OD(\Delta)_{>(\sigma,\sigma')}$ is a sphere of dimension $\dim \sigma -1$.
\item There is an isomorphism $\OD(\Delta)_{<(\sigma,\sigma')} \to \OD( \Lk_\Delta(\sigma))$ defined by $(\tau,\tau')\mapsto (\tau,\proj_{\sigma}(\tau'))$.
In particular, $\OD(\Delta)_{<(\sigma,\sigma')}$ has the homotopy type of a wedge of spheres of dimension $\codim \sigma - 1$.
\end{enumerate}
Moreover, these maps are $\Stab_{\Aut(\Delta)}(\sigma)$-equivariant.
\end{lemma}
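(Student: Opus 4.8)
The plan is to prove both items by producing explicit inverse maps, reducing everything to the standard behaviour of opposition and projections in spherical buildings; the ``in particular'' clauses then follow from Theorem~\ref{thm:vonHeydebreck} and from recognizing a barycentrically subdivided boundary sphere. First I record the fact behind item~(1). Fix $(\sigma,\sigma')\in\OD(\Delta)$ and let $S=\Conv_\Delta(\sigma,\sigma')$ be the Levi sphere they span; then $S$ lies in every apartment $\Sigma$ with $\sigma,\sigma'\in\Sigma$ (the convex hull is the same in $\Sigma$ and in $\Delta$), and since $\op_\Sigma$ exchanges $\sigma$ and $\sigma'$ and preserves convexity it maps $S$ onto itself and restricts there to the opposition involution $\op_S$ of the sphere $S$, independently of $\Sigma$ (so $\op_S(\sigma)=\sigma'$). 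From this I extract: \emph{for every face $\tau\subseteq\sigma$, the simplex $\op_S(\tau)\subseteq\sigma'$ is the unique opposite of $\tau$ contained in $\sigma'$.} Existence is clear, since $\op_S(\tau)\subseteq\op_S(\sigma)=\sigma'$ and $\tau,\op_S(\tau)$ are opposite in any apartment containing $S$; for uniqueness I use that $\Delta$ admits a type function, so a simplex has at most one face of each cotype, while all opposites of a given simplex share a common cotype (the opposition diagram automorphism being fixed), whence any opposite of $\tau$ lying inside $\sigma'$ equals $\op_S(\tau)$.

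Item~(1) is then formal. On $\OD(\Delta)_{>(\sigma,\sigma')}$ the map $(\tau,\tau')\mapsto\tau$ is well defined with image in $\{\tau:\emptyset\neq\tau\subsetneq\sigma\}$: here $\tau\subseteq\sigma$ and $\tau'\subseteq\sigma'$ is an opposite of $\tau$, whence $\tau'=\op_S(\tau)$ by the fact above, and $(\tau,\tau')\neq(\sigma,\sigma')$ forces $\tau\subsetneq\sigma$ (and then $\tau'\subsetneq\sigma'$ automatically). The same fact shows it is a bijection onto $\{\tau:\emptyset\neq\tau\subsetneq\sigma\}$ with inverse $\tau\mapsto(\tau,\op_S(\tau))$; as the order on $\OD(\Delta)$ is reverse inclusion in each coordinate, the bijection reverses order, i.e.\ it is an isomorphism onto $\{\tau:\emptyset\neq\tau\subsetneq\sigma\}^{\op}$. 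The order complex of that poset is the barycentric subdivision of the boundary $\partial\sigma$ of the simplex $\sigma$, hence a sphere of dimension $\dim\sigma-1$, which gives the ``in particular''.

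For item~(2): if $(\tau,\tau')<(\sigma,\sigma')$ then $\sigma\subsetneq\tau$ and $\sigma'\subsetneq\tau'$, so $\tau$ and $\proj_\sigma(\tau')$ both contain $\sigma$, i.e.\ are non-empty simplices of $\Lk_\Delta(\sigma)$. To see they are opposite there, choose an apartment $\Sigma\ni\tau,\tau'$ (possible since $\tau,\tau'$ are opposite); then $\sigma,\sigma'\in\Sigma$ and, by Lemma~\ref{lm:bijectionApartmentsLinkOpposite}, $\widetilde\Sigma:=\Lk_\Sigma(\sigma)$ is an apartment of $\Lk_\Delta(\sigma)$. The key identity — essentially the computation underlying Lemma~\ref{lm:bijectionApartmentsLinkOpposite}, see \cite[Prop.~2.1]{vH} — is that $\op_{\widetilde\Sigma}=\proj_\sigma\circ\op_\Sigma$ on $\widetilde\Sigma$, using that $\op_\Sigma$ maps $\Lk_\Sigma(\sigma)$ isomorphically onto $\Lk_\Sigma(\sigma')$ and that $\proj_\sigma,\proj_{\sigma'}$ restrict to mutually inverse isomorphisms between $\Lk_\Sigma(\sigma)$ and $\Lk_\Sigma(\sigma')$; hence $\op_{\widetilde\Sigma}(\tau)=\proj_\sigma(\op_\Sigma(\tau))=\proj_\sigma(\tau')$, so $\tau$ and $\proj_\sigma(\tau')$ are opposite in $\widetilde\Sigma$, hence in $\Lk_\Delta(\sigma)$. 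The assignment $(\tau,\tau')\mapsto(\tau,\proj_\sigma(\tau'))$ is order-preserving because $\proj_\sigma\colon\Delta\to\Lk_\Delta(\sigma)$ is simplicial, hence order-preserving on face posets, and $(\widetilde\tau,\widetilde\rho)\mapsto(\widetilde\tau,\proj_{\sigma'}(\widetilde\rho))$ is a two-sided inverse landing in $\OD(\Delta)_{<(\sigma,\sigma')}$ by the same identities. Then $\OD(\Lk_\Delta(\sigma))$ is Cohen-Macaulay, hence spherical, of dimension $\dim\Lk_\Delta(\sigma)=\codim_\Delta(\sigma)-1$ by Theorem~\ref{thm:vonHeydebreck}, which gives the ``in particular''. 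Equivariance of both maps follows at once from the naturality of convex hulls, opposition maps and projections, so they intertwine the action of the subgroup of $\Aut(\Delta)$ fixing $\sigma$ and $\sigma'$ (the subgroup that actually preserves these intervals).

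The one genuinely non-formal point is the identity $\op_{\widetilde\Sigma}=\proj_\sigma\circ\op_\Sigma$ in item~(2): controlling precisely how projection onto a simplex intertwines the opposition maps when one passes to the link. This is exactly the content of von Heydebreck's Proposition~2.1 (already invoked for Lemma~\ref{lm:bijectionApartmentsLinkOpposite}), and in the write-up I would either cite it directly or reprove it by the short gate-property argument inside the apartment $\Sigma$.
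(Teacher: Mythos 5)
Your proof is correct and follows essentially the same route as the paper: item (1) comes down to the observation that the second coordinate is forced to be the opposite of $\tau$ taken inside the Levi sphere/apartment spanned by $\sigma,\sigma'$, and item (2) is delegated (for the key identity $\op_{\widetilde\Sigma}=\proj_\sigma\circ\op_\Sigma$) to von Heydebreck's Proposition~2.1, which is exactly what the paper cites. The only difference is cosmetic: where you argue uniqueness of the opposite of $\tau$ inside $\sigma'$ via the type function and the opposition diagram involution, the paper simply notes that $\tau\subseteq\sigma$ and $\tau'\subseteq\sigma'$ both lie in the fixed apartment $\Sigma$, so $\tau'=\op_\Sigma(\tau)$; both arguments are fine.
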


\begin{proof}
To prove (1), fix an apartment $\Sigma$ containing $\sigma,\sigma'$.
Then, elements $(\tau,\tau')\in \OD(\Delta)_{>(\sigma,\sigma')}$ are uniquely determined by $\emptyset \neq \tau \subsetneq \sigma$ and $\tau' = \op_{\Sigma}(\tau)$.
This shows the isomorphism described in item (1).

Item (2) is explained in Proposition 2.1 from \cite{vH}.
The equivariance of the maps follows immediately from their definitions. 
\end{proof}

We can define an order-preserving map $F: \OD(\Delta)\to \D(\Delta)$ by
\begin{equation}
\label{eq:ForgetfulMap}
 F(\sigma,\sigma') = \Conv_{\Delta}(\sigma,\sigma').
\end{equation}

Using $F$, we show that $\D(\Delta)$ is also Cohen-Macaulay.

\begin{theorem}
\label{thm:DisCM}
Let $\Delta$ be a spherical building.
Then we have a homotopy equivalence
\begin{equation}
\label{eq:wedgeODandD}
\OD(\Delta)\simeq \D(\Delta) \vee \bigvee_{S\in \D(\Delta)} |S| * \D(\Lk_{\Delta}(\sigma_S)).    
\end{equation}
where $\sigma_S\in S$ is some maximal simplex.
Moreover, $\D(\Delta)$ is a Cohen-Macaulay poset.
\end{theorem}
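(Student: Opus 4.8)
The plan is to analyse the forgetful map $F\colon \OD(\Delta)\to\D(\Delta)$ of \eqref{eq:ForgetfulMap} by means of the wedge-decomposition Theorem~\ref{thm:wedgeDecomposition}, applied between the \emph{opposite} posets $F^{\op}\colon \OD(\Delta)^{\op}\to\D(\Delta)^{\op}$, so that one peels off the fibres of $F$ over the sub-Levi-spheres of each $S\in\D(\Delta)$. The first step is to identify these fibres. Fix a Levi sphere $S$ and an apartment $\Sigma\supseteq S$. Since $S$ is convex, $\Conv_\Delta(\sigma,\sigma')\subseteq S$ holds precisely when $\sigma,\sigma'\in S$; and since $\op_\Sigma$ fixes every wall of $\Sigma$ setwise it also fixes $S$ setwise, and its restriction to $S$ equals the opposition $\op_S$ of the Coxeter complex $S$ (both are the antipodal map of the sphere $|S|$), so opposition in $\Delta$ restricts to opposition in the (thin) building $S$. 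Hence $F^{-1}(\D(\Delta)_{\ge S})=\OD(S)$, and by Lemma~\ref{lm:convexIsleviSphere} the pairs in $\OD(S)$ with $\Conv_\Delta(\sigma,\sigma')=S$ are exactly the pairs of opposite chambers of $S$, i.e.\ the minimal elements of $\OD(S)$; thus $F^{-1}(\D(\Delta)_{>S})=\OD(S)\setminus\min\OD(S)$. Finally, because opposites in the Coxeter complex $S$ are unique and $\op_S$ preserves the face order, $(\sigma,\sigma')\mapsto\sigma$ is an isomorphism $\OD(S)\cong\X(S)^{\op}$; in particular $\OD(S)$ is pure of dimension $\dim S$ and $|\OD(S)|=|S|$, a sphere of dimension $\dim S$.

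With this in hand, the hypothesis of Theorem~\ref{thm:wedgeDecomposition} for $F^{\op}$ at $S$ is that the inclusion $\OD(S)\setminus\min\OD(S)\hookrightarrow\OD(S)$ is null-homotopic. I would deduce this from von Heydebreck's Theorem~\ref{thm:vonHeydebreck} applied to $S$: the poset $\OD(S)$ is Cohen--Macaulay of dimension $\dim S$, hence $(\dim S-1)$-connected, while $\OD(S)\setminus\min\OD(S)$ has dimension at most $\dim S-1$; and a map from a complex of dimension $\le m$ into an $m$-connected space is null-homotopic (when $\dim S=0$ the subposet is empty and the inclusion into the non-empty $\OD(S)$ is null-homotopic by the convention of Theorem~\ref{thm:wedgeDecomposition}). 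Applying Theorem~\ref{thm:wedgeDecomposition} and passing to geometric realizations then yields
\[ \OD(\Delta)\ \simeq\ \D(\Delta)\ \vee\ \bigvee_{S\in\D(\Delta)}\OD(S)*\D(\Delta)_{<S}. \]
Since $|\OD(S)|=|S|$ and, by Lemma~\ref{lem:lowerIntervalsDecompDelta}, $\D(\Delta)_{<S}\cong\D(\Lk_\Delta(\sigma_S))$ for a maximal simplex $\sigma_S\in S$, this gives \eqref{eq:wedgeODandD}.

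For the Cohen--Macaulay assertion I would induct on $\dim\Delta$, the case $\dim\Delta=0$ being clear since $\D(\Delta)$ is then a non-empty discrete poset. First, $\D(\Delta)$ is pure of dimension $\dim\Delta$ with rank function $S\mapsto\dim\Delta-\dim S$: a maximal chain of Levi spheres has its $\subseteq$-largest term an apartment $\Sigma$, and then lies inside the poset of Levi spheres of $\Sigma$, which by Proposition~\ref{prop:upperIntervalsDecompDelta} is a geometric lattice with its top removed, ranked of length $\dim\Sigma=\dim\Delta$. It then suffices to check that the link of every chain $c$ in $\K(\D(\Delta))$ is spherical of dimension $\dim\D(\Delta)-|c|$; such a link is the join of $\D(\Delta)_{<x}$ (for $x$ the least element of $c$), of $\D(\Delta)_{>x'}$ (for $x'$ the greatest), and of the open intervals $(x_i,x_{i+1})$ between consecutive elements of $c$. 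Here $\D(\Delta)_{<x}\cong\D(\Lk_\Delta(\sigma_x))$ is Cohen--Macaulay, hence spherical of the correct dimension, by the inductive hypothesis (using $\dim\Lk_\Delta(\sigma_x)<\dim\Delta$); $\D(\Delta)_{>x'}$ is spherical of dimension $\dim x'-1$ by Proposition~\ref{prop:upperIntervalsDecompDelta}; and each $(x_i,x_{i+1})$ is an open interval of the geometric lattice of Levi spheres of an apartment containing $x_i$, hence spherical of the correct dimension. A direct computation with the rank function shows these dimensions add up to $\dim\D(\Delta)-|c|$ in the join. The one remaining point is the case $c=\emptyset$, i.e.\ that $\D(\Delta)$ itself is $(\dim\Delta-1)$-connected; this follows from the wedge decomposition just established together with Theorem~\ref{thm:vonHeydebreck}, since $\OD(\Delta)$ is $(\dim\Delta-1)$-connected and each summand $|S|*\D(\Lk_\Delta(\sigma_S))$ is $(\dim\Delta-1)$-connected by the inductive hypothesis.

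The step I expect to be the main obstacle is the set-up of the first paragraph: one must run the wedge decomposition ``from the top'', i.e.\ over the upper intervals $\D(\Delta)_{\ge S}$ (equivalently, on the opposite posets), because the fibres $F^{-1}(\D(\Delta)_{\le S})$ over lower intervals are badly behaved --- they typically split into many contractible components and the inclusion from the fibre over $\D(\Delta)_{<S}$ is not null-homotopic --- whereas $F^{-1}(\D(\Delta)_{\ge S})$ is exactly the ordered decomposition poset $\OD(S)$ of the Coxeter complex $S$, which is precisely the object to which von Heydebreck's theorem and the dimension/connectivity count apply.
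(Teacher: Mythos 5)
Your proposal is correct and takes essentially the same route as the paper: the same forgetful map $F:\OD(\Delta)\to\D(\Delta)$, Theorem \ref{thm:wedgeDecomposition} applied to the opposite posets with upper fibres $F^{-1}(\D(\Delta)_{\geq S})\cong \X(S)^{\op}\simeq |S|$, von Heydebreck's Theorem \ref{thm:vonHeydebreck} to get sphericity of $\D(\Delta)$ from the wedge, and Proposition \ref{prop:upperIntervalsDecompDelta}, Lemma \ref{lem:lowerIntervalsDecompDelta} and induction for the interval analysis in the Cohen--Macaulay claim. One small caveat: you should not invoke von Heydebreck's theorem for $S$ itself (a Levi sphere need not be a Coxeter complex, hence need not be a thin building), but this is harmless, since your own identification $|\OD(S)|=|S|$ already yields the $(\dim S-1)$-connectivity needed for the null-homotopy.
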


\begin{proof}
By Proposition \ref{prop:upperIntervalsDecompDelta}, Lemma \ref{lem:lowerIntervalsDecompDelta} and induction on $\dim \Delta$, every interval of $\D(\Delta)$ is spherical of the right dimension.
It remains to show that $\D(\Delta)$ is spherical and that the wedge decomposition in Equation \eqref{eq:wedgeODandD} holds.
To that end, we use the map $F:\OD(\Delta)\to \D(\Delta)$ defined in Equation \eqref{eq:ForgetfulMap} and Theorem \ref{thm:wedgeDecomposition}.
By Lemma \ref{lem:lowerIntervalsDecompDelta} and induction, for every $S\in \D(\Delta)$, the lower interval $ \D(\Delta)_{<S}$ is spherical of dimension $\dim \Lk_{\Delta}(\sigma)$, where $\sigma\in S$ is a maximal simplex.

On the other hand, $F^{-1}(\D(\Delta)_{\geq S})$ consists of the pairs of opposite simplices contained in $S$.
This identifies with the subcomplex $S$: every simplex $\sigma \in S$ has a unique opposite $\op_S(\sigma)$ in $S$, so we have a simplicial isomorphism
\[ \sigma \in S \mapsto (\sigma, \op_S(\sigma)) \in F^{-1}(\D(\Delta)_{\geq S}).\]
That is, $\X(S)^{\op}\cong F^{-1}(\D(\Delta)_{\geq S})$ as posets.
Hence, $F^{-1}(\D(\Delta)_{>S}) \hookrightarrow F^{-1}(\D(\Delta)_{\geq S})$ is a null-homotopic map.
By Theorem \ref{thm:wedgeDecomposition}, Equation \eqref{eq:wedgeODandD} holds.
Since $\OD(\Delta)$ is spherical by Theorem \ref{thm:vonHeydebreck} and $\dim \OD(\Delta) = \dim\Delta = \dim \D(\Delta)$, we conclude that $\D(\Delta)$ is spherical (and therefore Cohen-Macaulay).
\end{proof}

Note that Theorem \ref{thm:DisCM} implies Corollary \ref{coro:DDeltaCM}

Next, we prove that $\OPD(\Delta)$ has the homotopy type of the simplicial join $\Delta*\Delta$, and that it carries the tensor-square Steinberg representation in homology.
For a poset $X$, recall that $X^{(i)}$ denotes the subposet of elements of height at most $i$.
If $X = \X(K)$ where $K$ is a Cohen-Macaulay simplicial complex, then $X$ is Cohen-Macaulay as a poset.
Also, every rank selection of a Cohen-Macaulay poset is spherical (see Theorem 6.2 of \cite{Baclawski}).
In particular, rank selections of the face poset of a (spherical) building are Cohen-Macaulay.

\begin{theorem}
\label{thm:OPDandDeltaDelta}
Let $\Delta$ be a spherical building of dimension $m$ and let $H$ be a group acting on $\Delta$ by simplicial automorphisms.
Write $D_1  = \X(\Delta)$ and $D_2 = \X(\Delta)^{\op}$.
Note that $\OPD(\Delta) = D_1 \cup \OD(\Delta)$.

Let $\phi:\OPD(\Delta)\to D_1 \ojoin D_2$ be the following map:
\begin{align*}
    \phi(\sigma,\sigma') & = \sigma \in D_2 \quad \text{ if } (\sigma,\sigma')\in \OD(\Delta) \subset \OPD(\Delta),\\
    \phi(\sigma) & = \sigma \in D_1 \quad \text{ if }  \sigma\in D_1 \subset \OPD(\Delta).
\end{align*}
Then $\phi$ is an $H$-equivariant order-preserving map.
Moreover, the following hold:
\begin{enumerate}
\item The restriction
\[ \phi_H: (\OPD(\Delta)^{(i+m+1)})^H \to  (D_1 \ojoin (D_2^{(i)}))^H\]
is a homotopy equivalence for all $0 \leq i \leq m$.
Thus, $\OPD(\Delta)^{(i+m+1)} \simeq D_1 \ojoin D_2^{(i)}$ is spherical of dimension $m + i + 1$.
\item In particular, $\phi$ gives rise to a homotopy equivalence
\[\OPD(\Delta) \simeq \Delta * \Delta,\]
which is $H$-equivariant if $H$ is a compact Lie group, and to an $H$-equivariant isomorphism
\[ \widetilde{H}_{*}(\OPD(\Delta)) \cong_H \widetilde{H}_*(\Delta*\Delta).\]
\end{enumerate}
\end{theorem}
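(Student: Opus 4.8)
The plan is to verify the hypotheses of Quillen's fibre theorem (Theorem~\ref{thm:quillen}) for $\phi$ and each of its fixed‑point restrictions, and then to deduce (2) from (1) by specialisation together with equivariant Whitehead. First I would check that $\phi$ is a well‑defined, order‑preserving, $H$‑equivariant poset map: the only non‑obvious instance of the order of $\OPD(\Delta)$ is $\sigma\prec(\tau,\tau')$ with $\sigma\in\X(\Delta)$, $(\tau,\tau')\in\Opp(\Delta)$, and then $\phi(\sigma)=\sigma\in D_1$ precedes $\phi(\tau,\tau')=\tau\in D_2$ because in $D_1\ojoin D_2$ every element of $D_1$ precedes every element of $D_2$; monotonicity on $\Opp(\Delta)$ is immediate from reversing inclusion in each coordinate, and equivariance is clear. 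Next I would record the bookkeeping identity $\OPD(\Delta)^{(i+m+1)}=\phi^{-1}\!\big(D_1\ojoin D_2^{(i)}\big)$: a maximal chain below $(\sigma,\sigma')\in\Opp(\Delta)$ with $\sigma$ non‑empty runs through a maximal flag of an apartment containing $\sigma,\sigma'$ and then up the interval $\OD(\Delta)_{<(\sigma,\sigma')}$ of Lemma~\ref{lm:intervalsBuildingCase}, so the height of $(\sigma,\sigma')$ in $\OPD(\Delta)$ equals $m+\codim_\Delta(\sigma)+1$, while every simplex has height at most $m$; hence $(\sigma,\sigma')\in\OPD(\Delta)^{(i+m+1)}$ exactly when $\codim_\Delta(\sigma)\le i$, i.e.\ when $\sigma\in D_2^{(i)}$. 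Since $D_1\ojoin D_2^{(i)}$ is a down‑set of $D_1\ojoin D_2$, it suffices to prove the single statement: \emph{for every group $H$ acting on $\Delta$ and every $y\in(D_1\ojoin D_2)^H$, the fixed‑point fibre $\big(\phi^{-1}((D_1\ojoin D_2)_{\le y})\big)^H$ is contractible.} Granting this, Theorem~\ref{thm:quillen} makes each $\phi_H$ in (1) a homotopy equivalence; and $D_2^{(i)}$, being the selection of the top $i+1$ ranks of the Cohen--Macaulay poset $\X(\Delta)$, is spherical of dimension $i$, so $D_1\ojoin D_2^{(i)}$ is spherical of dimension $m+i+1$, giving all of (1).

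The fibre statement splits into two cases. If $y=\tau\in D_1^H$, then $\phi^{-1}((D_1\ojoin D_2)_{\le\tau})=\X(\Delta)_{\le\tau}$, as no element of $\Opp(\Delta)$ is mapped by $\phi$ into $D_1$; its $H$‑fixed subposet has the $H$‑invariant simplex $\tau$ as greatest element, hence is contractible. The substantial case is $y=\rho\in D_2^H$, where the fibre is
\[
Z\;=\;\X(\Delta)\ \cup\ \{(\sigma,\sigma')\in\Opp(\Delta)\tq\rho\subseteq\sigma\},
\]
and one must show $Z^H$ is contractible. A first reduction: for $(\sigma,\sigma')\in Z$ let $\rho^{*}$ be the unique face of $\sigma'$ opposite to $\rho$; the map which is the identity on $\X(\Delta)$ and sends $(\sigma,\sigma')\mapsto(\rho,\rho^{*})$ is an $H$‑equivariant order‑preserving map $Z\to Z$ above the identity — the only point to check is that $\tau\prec(\sigma,\sigma')$ forces $\tau\prec(\rho,\rho^{*})$, which holds because an apartment $\Sigma$ containing $\tau,\sigma,\sigma'$ also contains $\rho$ and $\op_{\Sigma}(\rho)=\rho^{*}$ — so it induces a homotopy equivalence of $Z$ (and of $Z^H$) with $Z^{*}=\X(\Delta)\cup\{(\rho,\rho^{*})\tq\rho^{*}\text{ opposite }\rho\}$, in which $(\rho,\rho^{*})$ lies above exactly the simplices contained in an apartment through $\rho$ and $\rho^{*}$.

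It then remains to prove $Z^{*}$ and its fixed points are contractible, and I expect this to be the main obstacle. Combinatorially $Z^{*}$ is $\X(\Delta)$ with a cone attached, for each opposite $\rho^{*}$ of $\rho$, along the convex subcomplex of simplices lying in a common apartment with $\rho$ and $\rho^{*}$. I would argue by induction on $m=\dim\Delta$. When $\rho$ is a chamber this subcomplex is a single apartment $\Sigma_{\rho^{*}}$ (opposite chambers lie in a unique apartment, Subsection~\ref{sub:opposition}), and the cones are $(m{+}1)$‑disks glued along the $m$‑spheres $|\Sigma_{\rho^{*}}|$: the Solomon--Tits theorem provides that the classes $[\Sigma_{\rho^{*}}]$ span $\widetilde{H}_{m}(\Delta)$, so attaching them annihilates $\widetilde{H}_{m}$, while the uniqueness of the apartment through a pair of opposite chambers shows that the chamber of $\Sigma_{\rho^{*}}$ opposite to $\rho$ lies in no other $\Sigma_{\rho'}$, forcing the cycles $[\Sigma_{\rho^{*}}]$ to be independent and preventing the creation of homology in degree $m{+}1$; hence $Z^{*}$ becomes contractible. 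When $\rho$ is not a chamber the attached subcomplexes are no longer apartments, but via Lemma~\ref{lm:bijectionApartmentsLinkOpposite} they are governed by $\Lk_{\Delta}(\rho)$, to which the inductive hypothesis on $\OPD$ applies. To pass to $H$‑fixed points I would run the contraction $H$‑equivariantly using the $\CAT(1)$ geometry of $|\Delta|$, exactly as in Case~2 of the proof of Theorem~\ref{thm:PDandCB} and in Lemma~\ref{lm:contractibleIntersectionApartments}: $H$ acts by isometries, and a closed convex subset of a round sphere without an antipodal pair retracts canonically onto its metrically defined circumcentre, which is therefore $H$‑fixed. Assembling these steps proves (1); and (2) follows by taking $i=m$, so $\OPD(\Delta)=\OPD(\Delta)^{(2m+1)}\simeq D_1\ojoin D_2=\X(\Delta)\ojoin\X(\Delta)^{\op}$, whose order complex is $\Delta'\ast\Delta'$, hence $\OPD(\Delta)\simeq\Delta\ast\Delta$; since $\phi$ is $H$‑equivariant and induces equivalences on all $H$‑fixed subposets, Theorem~\ref{thm:whitehead} upgrades this to an $H$‑equivariant equivalence for compact Lie $H$, and in all cases $\phi$ induces an $H$‑equivariant isomorphism $\widetilde{H}_{*}(\OPD(\Delta))\cong_{H}\widetilde{H}_{*}(\Delta\ast\Delta)$.
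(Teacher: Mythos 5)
Your framework (checking $\phi$ is an equivariant poset map, the height bookkeeping identifying $\OPD(\Delta)^{(i+m+1)}$ with $\phi^{-1}(D_1\ojoin D_2^{(i)})$, Quillen's fiber theorem applied to $\phi_H$, the easy fiber over $\tau\in D_1^H$, and the deduction of (2) from (1) via Whitehead) matches the paper, and your first reduction of the fiber $Z$ over $\rho\in D_2^H$ to $Z^{*}$ via the closure operator $(\sigma,\sigma')\mapsto(\rho,\rho^{*})$ is correct. The gap is in the one step that carries all the weight: the contractibility of $(Z^{*})^H$. Your argument for it is homological (Solomon--Tits: the apartment classes through a chamber $\rho$ span and are independent in $\widetilde H_m(\Delta)$, so coning them off leaves an acyclic complex), and this at best gives acyclicity of $Z^{*}$, not contractibility --- you would still need simple connectivity, which is automatic only for $m\geq 2$ and genuinely fails to be automatic when $m=1$ (killing a set of loops whose classes form a basis of $H_1$ of a graph need not kill the free fundamental group). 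More seriously, when $\rho$ is not a chamber the subcomplexes being coned off are the unions $E_{\rho^{*}}=\bigcup_{\Sigma\in\A(\Delta,\rho,\rho^{*})}\Sigma$, not single apartments; the statement you would need (that their classes generate $\widetilde H_m(\Delta)$, are suitably independent, or that the configuration reduces to $\OPD(\Lk_\Delta(\rho))$) is only asserted, and no inductive scheme is actually set up relating $Z^{*}$ for $\Delta$ to the link.

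Most critically, Quillen's fiber theorem for $\phi_H$ requires contractibility of the \emph{fixed-point} fibers $(Z^{*})^H$, and your proposed equivariant upgrade does not apply: the circumcentre/CAT(1) retraction (Lemma \ref{lm:contractibleIntersectionApartments}, Case 2 of Theorem \ref{thm:PDandCB}) works for convex subcomplexes lying in an apartment and containing no pair of opposite points, whereas $Z^{*}$ contains the whole of $\X(\Delta)$ together with the added cone points $(\rho,\rho^{*})$ and is not a convex subset of any CAT(1) sphere; a contraction produced by a homology computation is not canonical and cannot be ``run equivariantly''. This is exactly where the paper's proof does something different: it writes the fiber as the union of the subposets $V_\Sigma=\X(\Sigma)^H\ojoin(\OD(\Sigma)_{\leq(\rho,\op_\Sigma(\rho))})^H$ over apartments $\Sigma\in\A(\Delta,\rho)$, shows every finite intersection is contractible --- either because the $H$-fixed pair $(\rho,\op_{\Sigma_1}(\rho))$ is a cone point, or because $\bigcap_j\Sigma_j^H$ contains no opposite of $\rho$ and Lemma \ref{lm:contractibleIntersectionApartments} applies --- and concludes by the nerve theorem. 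To repair your proposal you would need to replace the Solomon--Tits attachment argument by such a covering (or another genuinely equivariant) argument for $(Z^{*})^H$; as written, the heart of item (1) is missing.
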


\begin{proof}
Write $X = \OPD(\Delta)^{(i+m+1)}$ and $Y = D_1\ojoin (D_2^{(i)})$.
As $\phi$ is clearly order-preserving when restricted to $D_1$ and to $\OD(\Delta_1)$, it defines an order-preserving map $\phi:\OPD(\Delta) \to D_1\ojoin D_2$.
It is also clear that $\phi$ is $H$-equivariant.
Moreover, $\phi$ restricts to an $H$-equivariant order-preserving map $\phi:X\to Y$.

Applying item (1) when $H = 1$ and $i = m$
yields the homotopy equivalence. Observing that the map on homology induced by a homotopy equivalence is always an isomorphism, we derive item (2) up to $H$-equivariance. The $H$-module isomorphism of homology groups then follows from (1) and
Whitehead's Theorem \ref{thm:whitehead}.

To prove item (1), we use Quillen's fiber Theorem \ref{thm:quillen} on the induced poset map $\phi_H: X^H\to Y^H$ and show that for all $\tau \in Y^H$, the preimage $\phi^{-1}_H(Y^H_{\leq \tau})$ is contractible.
Note that $X^H = D_1^H \cup ( \OD(\Delta)^{(i)})^H$, and $Y^H = D_1^H \ojoin (D_2^{(i)})^H$.

If $\tau \in D_1^H\subseteq Y^H$, then 
\[ \phi^{-1}_H \big(Y^H_{\leq \tau} \big) = {D_1}_{\subseteq \tau}\]
is the poset of non-empty faces of $\tau$, which is contractible as $\tau$ is its unique maximal element.

Now take $\tau\in D_2^H$ of height at most $i$ in $D_2$.
We show that $\phi^{-1}_H(Y^H_{\leq \tau})$ is contractible.
For $\Sigma\in \A(\Delta,\tau)$, set $V_{\Sigma} := \X(\Sigma)^H \ojoin (\OD(\Sigma)_{\leq (\tau, op_{\Sigma}
(\tau))})^H$.

\smallskip

\noindent {\bf Claim:} We have the following equality:
\begin{equation}
\label{eq:preimageAsUnion}
\phi^{-1}_H \big(Y^H_{\leq \tau} \big)  = \bigcup_{\Sigma \in \A(\Delta,\tau)}  V_{\Sigma} = \bigcup_{\Sigma \in \A(\Delta,\tau)} \X(\Sigma)^H \ojoin (\OD(\Sigma)_{\leq (\tau, op_{\Sigma}(\tau))})^H.
\end{equation}

\smallskip 

\noindent {\textit{Proof:}}
If $\sigma\in \phi^{-1}_H(Y^H_{\leq \tau}) \cap D_1^H$, then there is an apartment $\Sigma\in \A(\Delta)$ containing both $\sigma$ and $\tau$, so $\sigma\in V_{\Sigma}$.
If $(\sigma,\sigma')\in \phi^{-1}_H(Y^H_{\leq \tau}) \cap \OD(\Delta)^H$ then $\sigma \supseteq \tau$ and there is an apartment $\Sigma$ containing $\sigma$ and $\sigma'$, and hence $\tau$.
Clearly $\op_\Sigma(\tau) \subseteq \sigma'$, i.e. $(\sigma,\sigma')\leq (\tau, \op_{\Sigma}(\tau))$, and $(\sigma,\sigma')\in (\OD(\Sigma)_{\leq (\tau, op_{\Sigma}
(\tau))})^H \subseteq V_{\Sigma}$ with $\Sigma \in\A(\Delta,\tau)$.
Note that we are not claiming that $(\tau,\op_{\Sigma}(\tau))\in V_{\Sigma}$ as $H$ may not fix $\op_{\Sigma}(\tau)$.
On the other hand, it is not hard to verify that $V_{\Sigma}\subseteq \phi^{-1}_H(Y^H_{\leq \tau})$ for all $\Sigma\in \A(\Delta,\tau)$.
This concludes with the proof of the claim. $\Box$

\smallskip

Now we show that any intersection of finitely many $V_{\Sigma}$s is contractible.
This implies that $\phi^{-1}_H(Y^H_{\leq \tau})$ is contractible by, for instance, the nerve theorem.
Let $\Sigma_1,\ldots,\Sigma_r\in S(\Sigma,\tau)$, $r\geq 1$.
Then
\begin{equation}
\label{eq:intersectionVSigmas}
V_{\Sigma_1}\cap \ldots \cap V_{\Sigma_r} = \big( \bigcap_{j=1}^r \X(\Sigma_j)^H\big) \ojoin \left( \bigcap_{j=1}^r (\OD(\Sigma_j)_{\leq(\tau, \op_{\Sigma_j}(\tau))})^H \right).
\end{equation}
There is at most one opposite of $\tau$ involved this intersection, as the simplices of $\Delta$ appearing therein all lie in $\Sigma_1$.
By Lemma \ref{lm:contractibleIntersectionApartments}, the poset
\[ \bigcap_{j=1}^r \X(\Sigma_j)^H  =  \X\big(\, \bigcap_{j=1}^r (\Sigma_j^H)\, \big)\]
is contractible if it does not contain the opposite of $\tau$, in which case the intersection in Equation \eqref{eq:intersectionVSigmas} is contractible as well.
Thus, we are left with the case that the unique opposite 
$\op_{\Sigma_1}(\tau)$ of $\tau$ in $\Sigma_1$ lies in $\Sigma_j^H$ for all $1\leq j\leq r$.
Here, for all $j$ we have that $(\tau, \op_{\Sigma_1}(\tau)) = (\tau, \op_{\Sigma_j}(\tau))$ is fixed by $H$, and gives a cone point in the right-hand factor of the join in Equation \eqref{eq:intersectionVSigmas}. As a consequence the intersection  $V_{\Sigma_1}\cap \ldots \cap V_{\Sigma_r}$ is contractible.

By Quillen's fiber Theorem \ref{thm:quillen}, we see that $\phi_H:X^H \to Y^H$ is a homotopy equivalence.
By taking $H = 1$ and using the fact that buildings are Cohen-Macaulay, we see that $\OPD(\Delta)^{(i+m+1)} \simeq D_1 \ojoin D_2^{(i)}$ is spherical of dimension $m + i + 1$.
This concludes the proof of item (1).
\end{proof}

From Theorem \ref{thm:OPDandDeltaDelta}, we deduce homotopy formulas for
lower intervals in the poset $\OPD(\Delta)$. For the formulation of our
result we set $\OPD(\Delta)_{\prec (\emptyset, \emptyset)} = 
\OPD(\Delta)$, $\D(\Delta)_{< \emptyset} = \D(\Delta)$, $\PD(\Delta)_{\prec \emptyset} = \PD(\Delta)$, and  $\Conv_\Delta(\emptyset,\emptyset) = \{\emptyset\}$ the empty Levi sphere.

\begin{theorem}
\label{thm:OPDtoPD}
Let $\Delta$ be a spherical building, and let $(\sigma,\sigma')\in \OD(\Delta) \cup \{(\emptyset,\emptyset)\}$.
Let $S := \Conv_{\Delta}(\sigma,\sigma')$.
Then we have a homotopy equivalence
\begin{equation}
\label{eq:OPDwedgePD}
    \OPD(\Delta)_{\prec (\sigma,\sigma')} \simeq \PD(\Delta)_{\prec S} \bigvee_{ T \in {\D(\Delta)_{< S}}} |T| * |T|*\PD(\Lk_{\Delta}(\sigma_T)),
\end{equation}
where $\sigma_T\in T$ is some maximal simplex.
In particular, $\PD(\Delta)$ is spherical of dimension $2\dim\Delta + 1$ and
\begin{equation}
\label{eq:intervalOPD}
\OPD(\Delta)_{\prec (\sigma,\sigma')} \simeq |S| * \OPD(\Lk_{\Delta}(\sigma)).
\end{equation}
\end{theorem}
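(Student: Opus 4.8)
The plan is to apply the forgetful map $F:\OPD(\Delta)\to \PD(\Delta)$ (extending the map of Equation \eqref{eq:ForgetfulMap} by the identity on the $\Delta$-part) and feed it into the wedge-decomposition machine of Theorem \ref{thm:wedgeDecomposition}, exactly mirroring the proof of Theorem \ref{thm:DisCM} but over the partial posets $\OPD(\Delta)_{\prec(\sigma,\sigma')}$ and $\PD(\Delta)_{\prec S}$. First I would check that $F$ indeed restricts to an order-preserving map $\OPD(\Delta)_{\prec(\sigma,\sigma')}\to \PD(\Delta)_{\prec S}$: a pair of opposite simplices $(\tau,\tau')$ with $(\tau,\tau')\leq(\sigma,\sigma')$ lies in a common apartment with $S$, and conversely the apartment condition defining $\prec$ in $\OPD$ is transported to the one defining $\prec$ in $\PD$ since $\Conv_\Delta(\tau,\tau')$ is a Levi sphere of any apartment containing $\tau,\tau'$. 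The $\Delta$-part of $\OPD(\Delta)_{\prec(\sigma,\sigma')}$ is, by definition of the orderings, exactly the set of simplices lying in a common apartment with $\sigma$ and $\sigma'$, and since $S=\Conv_\Delta(\sigma,\sigma')$ this coincides with the $\Delta$-part of $\PD(\Delta)_{\prec S}$; so $F$ is the identity on $\Delta$-parts and sends the $\Opp$-part onto the $\D$-part.

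Next I would identify the fibers. For $R\in \PD(\Delta)_{\prec S}$ lying in the $\D(\Delta)$-part (so $R$ is a Levi sphere with $S\subseteq R$), the lower fiber $F^{-1}\big(\PD(\Delta)_{\prec S}{}_{\leq R}\big)$ splits as a join of the $\Delta$-part of $\PD(\Delta)_{\preceq R}$ with $F^{-1}(\D(\Delta)_{\leq R})=F^{-1}(\D(\Delta)_{\geq R}\text{ in reverse-inclusion order})$; as in the proof of Theorem \ref{thm:DisCM}, $F^{-1}$ of the principal up-set of $R$ in $\D(\Delta)$ is the set of opposite pairs inside $R$, which is simplicially isomorphic to $\X(R)^{\op}$ via $\rho\mapsto(\rho,\op_R(\rho))$, hence the inclusion $F^{-1}(\PD(\Delta)_{<R})\hookrightarrow F^{-1}(\PD(\Delta)_{\leq R})$ is null-homotopic (the codomain is a join with the cone $\X(R)^{\op}$, contractible). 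For $R$ in the $\Delta$-part, $F^{-1}$ of its lower set is just $\X(\Delta)_{\subseteq R}$ itself (a cone on $R$), contractible, and in particular the inclusion from the strictly-lower fiber is null-homotopic; these $R$'s contribute $\PD(\Delta)_{\prec S}$-part to the wedge but with $\PD(\Delta)_{\prec S}{}_{>R}$-joins that are absorbed into the base $\PD(\Delta)_{\prec S}$ (this is the standard "already equivalent on the subposet of such $R$" phenomenon; cf. how $\D(\Delta)$ reappears as the base term in Theorem \ref{thm:DisCM}). Plugging into Theorem \ref{thm:wedgeDecomposition} then yields
\[
\OPD(\Delta)_{\prec(\sigma,\sigma')} \simeq \PD(\Delta)_{\prec S} \bigvee_{T\in\D(\Delta)_{<S}} F^{-1}\big(\PD(\Delta)_{\preceq T}\big) * \PD(\Delta)_{\prec S}{}_{>T}.
\]
By Theorem \ref{thm:PDandCB}(3) applied inside $\D(\Delta)_{\leq T}\cong \D(\Lk_\Delta(\sigma_T))$ (Lemma \ref{lem:lowerIntervalsDecompDelta}), together with $\PD(\Delta)_{\prec S}{}_{>T}\simeq \D(\Delta)_{(S<\,\cdot\,<T)}$ joined appropriately, the $T$-th summand simplifies: $F^{-1}(\PD(\Delta)_{\preceq T})\simeq |T|*\PD(\Lk_\Delta(\sigma_T))$ from Theorem \ref{thm:PDandCB}(3) (the $\X(T)^{\op}$ factor gives $|T|$ and the $\Delta$-part gives $\PD(\Lk_\Delta(\sigma_T))\simeq Y(\Lk_\Delta(\sigma_T))$ after Lemma \ref{lm:CBequivYDelta}), and the remaining $\PD(\Delta)_{\prec S}{}_{>T}$ contributes the second $|T|$ factor via Proposition \ref{prop:upperIntervalsDecompDelta} and Theorem \ref{thm:PDandCB}(2)–(3); this gives Equation \eqref{eq:OPDwedgePD}.

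Finally, for the consequences: $\PD(\Delta)$ is spherical of dimension $2\dim\Delta+1$ by taking $(\sigma,\sigma')=(\emptyset,\emptyset)$, so $S=\{\emptyset\}$, combining Equation \eqref{eq:OPDwedgePD} with $\OPD(\Delta)\simeq \Delta*\Delta$ (Theorem \ref{thm:OPDandDeltaDelta}): $\Delta*\Delta$ is spherical of dimension $2\dim\Delta+1$, and the wedge summands $|T|*|T|*\PD(\Lk_\Delta(\sigma_T))$ are, by induction on $\dim\Delta$, spherical of dimension $\dim T+\dim T+(2\codim_\Delta(\sigma_T)+1)$; since $\dim\sigma_T=\dim T$ and $\codim_\Delta(\sigma_T)=\dim\Delta-\dim T$, this is $2\dim\Delta+1$ as well, forcing $\PD(\Delta)$ to be spherical of that dimension (a wedge summand of a spherical complex of matching dimension is spherical). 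Equation \eqref{eq:intervalOPD} then follows from Theorem \ref{thm:PDandCB}(2) (applied with $H=1$, giving $\PD(\Delta)_{\prec S}\simeq |S|*Y(\Delta)_{\supsetneq S}$, and matching $Y(\Delta)_{\supsetneq S}\simeq Y(\Lk_\Delta(\sigma))\simeq \PD(\Lk_\Delta(\sigma))$ via Proposition \ref{prop:intervalSphereYPoset}) fed back through Lemma \ref{lm:intervalsBuildingCase}(2), or more directly by re-running the fiber argument with $\OPD$ in place of $\PD$ in the link.

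The main obstacle I anticipate is bookkeeping in the wedge decomposition: making precise which elements $R\in\PD(\Delta)_{\prec S}$ genuinely contribute nontrivial summands versus which get absorbed into the base term, and carefully matching the join factors $F^{-1}(\PD(\Delta)_{\preceq T})*\PD(\Delta)_{\prec S}{}_{>T}$ with the clean form $|T|*|T|*\PD(\Lk_\Delta(\sigma_T))$. This requires knowing $\PD(\Delta)_{\prec S}{}_{>T}$ has the homotopy type of $|T|$ (not just of some sphere), which should come from Theorem \ref{thm:PDandCB}(3) and Proposition \ref{prop:intervalSphereYPoset} combined with the fact that the relevant interval in $\D(\Delta)$ is a geometric-lattice interval (Proposition \ref{prop:upperIntervalsDecompDelta}); getting the suspension/join dimensions to line up exactly is where I would be most careful.
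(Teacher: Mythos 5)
Your overall strategy is the paper's: extend $F:\OD(\Delta)\to\D(\Delta)$ by the identity on the $\Delta$-part, restrict it to a map $X=\OPD(\Delta)_{\prec(\sigma,\sigma')}\to Y=\PD(\Delta)_{\prec S}$, and feed it into Theorem \ref{thm:wedgeDecomposition} as in Theorem \ref{thm:DisCM}; your check that the $\Delta$-parts of $X$ and $Y$ coincide is also correct. The genuine gap is in the fiber analysis. The paper applies Theorem \ref{thm:wedgeDecomposition} to the \emph{opposite} posets, so the relevant fibers are the upper ones $F^{-1}(Y_{\succeq y})$. These are simple: over $\tau_0$ in the $\Delta$-part the fiber has $\tau_0$ as unique minimal element, hence is contractible; over $T\in\D(\Delta)_{<S}$ the fiber contains no $\Delta$-part at all (no simplex is $\succeq$ a Levi sphere in $\PD(\Delta)$) and is isomorphic to $\{\tau\in T\tq \tau\supsetneq\sigma\}\cong\Lk_T(\sigma)$, a sphere of dimension $\dim T-\dim\sigma-1$, into which the strict fiber includes null-homotopically. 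The summand attached to $T$ is then $F^{-1}(Y_{\succeq T})*Y_{\prec T}\simeq \Lk_T(\sigma)*\PD(\Delta)_{\prec T}$, and the second $|T|$ factor in \eqref{eq:OPDwedgePD} comes from $\PD(\Delta)_{\prec T}\simeq |T|*\PD(\Lk_\Delta(\sigma_T))$ via Theorem \ref{thm:PDandCB}(3); for general $\sigma$ the first factor is $\Lk_T(\sigma)$, which is $|T|$ only when $\sigma=\emptyset$.

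You instead work with the lower fibers $F^{-1}(Y_{\preceq R})$, and the identifications you make for them would fail. Since $\D(\Delta)$ is ordered by reverse inclusion, $F^{-1}(Y_{\preceq R})$ consists of the simplices sharing an apartment with $R$ together with the opposite pairs whose convex hull \emph{contains} $R$ --- not the pairs inside $R$, which form the upper fiber; your line ``$F^{-1}(\D(\Delta)_{\leq R})=F^{-1}(\D(\Delta)_{\geq R})$'' conflates the two. This lower fiber is also not a join of its $\Delta$-part with its $\OD$-part: a simplex $\rho$ with $\rho\prec R$ need not lie in a common apartment with a given opposite pair spanning a Levi sphere containing $R$, so the crossed relations are missing, and the homotopy type of this mixed poset is essentially as hard as the theorem itself. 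Moreover $\X(R)^{\op}$ is not ``a cone, contractible'': its realization is the sphere $|R|$, and the null-homotopy used in Theorem \ref{thm:DisCM} is that of the inclusion of the non-maximal opposite pairs into this sphere, not contractibility of the target. Finally, in your orientation $Y_{>T}$ is the set of Levi spheres strictly between $S$ and $T$, which by Proposition \ref{prop:upperIntervalsDecompDelta} is an open interval in a geometric lattice and is not homotopy equivalent to $|T|$, so the $|T|*|T|$ shape cannot arise from the factors as you assign them. Your deduction of sphericity of $\PD(\Delta)$ (retract of $\Delta*\Delta$ plus the dimension bound) and your rough plan for \eqref{eq:intervalOPD} are in the right spirit, but the core wedge computation must be redone with the upper fibers as above.
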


\begin{proof}
We extend the map $F:\OD(\Delta)\to \D(\Delta)$ given in
\eqref{eq:ForgetfulMap} by the identity map from $\Delta \subseteq \OD(\Delta)$ to $\Delta \subseteq \D(\Delta)$, obtaining a poset map $F:\OPD(\Delta)\to \PD(\Delta)$.
Let $X = \OPD(\Delta)_{\prec (\sigma,\sigma')}$ and $Y = \PD(\Delta)_{\prec S}$.
Now we verify the hypotheses of Theorem \ref{thm:wedgeDecomposition} 
for $X$, $Y$ with $f$ the restriction of $F$ to $X$ with codomain 
$Y$ to deduce the homotopy equivalence in Equation \eqref{eq:OPDwedgePD}.
Indeed, we apply Theorem \ref{thm:wedgeDecomposition} to the opposite
posets, and for that reason we consider upper instead of lower fibers.

We need to distinguish two cases for $y \in Y$.

First, suppose that $y = T\in \D(\Delta)_{\prec S}$.
Then, by projecting onto the first coordinate, we have
\begin{equation}
\label{eq:fiberForgetfulMap}
F^{-1}(Y_{\geq T})  = F^{-1}(\D(\Delta)_{\geq T}\cap \D(\Delta)_{< S}) \cong \{\tau\in T \tq \tau\supsetneq \sigma\} \cong \Lk_{T}(\sigma).
\end{equation}
As $T$ triangulates a sphere, $\Lk_{T}(\sigma)$ is a sphere of dimension $\dim T - \dim \sigma - 1$.
In particular, the inclusion $F^{-1}(Y_{>T}) \hookrightarrow F^{-1}(Y_{\geq T})$ is a null-homotopic map.

Now, let $y = \tau_0 \in \X(\Delta)$ such that $\tau_0 \prec S$.
Then $F^{-1}(Y_{\succeq \tau_0}) = W \cup Z$ where
\begin{align*}
    W & = \{ \tau'\in \X(\Delta) \tq \tau'\supseteq \tau_0, \tau'\prec S\},\\
    Z & = \{ (\tau,\tau')\in \OD(\Delta)_{\prec (\sigma,\sigma')} \tq \tau_0 \prec \Conv_{\Delta}(\tau,\tau') \}.
\end{align*}
In this case $\tau_0$ is the unique minimal element of this preimage, which is therefore a contractible poset.
It follows also that the inclusion $F^{-1}(Y_{\succ \tau_0}) \hookrightarrow F^{-1}(Y_{\succeq \tau_0})$ is null homotopic.

Thus, by Theorems \ref{thm:wedgeDecomposition} and \ref{thm:PDandCB}, we get the wedge decomposition as stated in Equation \eqref{eq:OPDwedgePD}.
By Theorem \ref{thm:OPDandDeltaDelta}, $\PD(\Delta)$ is spherical of dimension $2\dim \Delta + 1$.
Finally, Equation \eqref{eq:intervalOPD} follows from Theorem \ref{thm:PDandCB}(4) and Lemma \ref{lem:lowerIntervalsDecompDelta}.
\end{proof}

\begin{remark}
Let $(\sigma,\sigma')\in \OD(\Delta)$ and $S := \Conv_{\Delta}(\sigma,\sigma')$.
Let $Y := \{\sigma\} \ojoin \OPD(\Lk_{\Delta}(\sigma))$.
It seems that the map $p_{\sigma} : \OPD(\Delta)_{\prec (\sigma,\sigma')} \to Y$ that takes a simplex $\tau\prec (\sigma,\sigma')$ to $\proj_{\sigma}(\tau) \in \{\sigma\} \ojoin \X(\Lk_{\Delta}(\sigma))$, and a pair of opposite simplices $(\tau,\tau')\in \OD(\Delta)_{\prec (\sigma,\sigma')}$ to $(\tau,\proj_{\sigma}(\tau')) \in \OD(\Lk_{\Delta}(\sigma))$ should satisfy the following conditions:
\begin{enumerate}
    \item $p_{\sigma}^{-1}(\sigma) = \X(S)$.
    \item For $\tau\in\X(\Lk_{\Delta}(\sigma))$, $p_{\sigma}^{-1}(Y_{\subseteq \tau})$ is contractible since it is the convex hull of $\tau,\tau'$, where $\tau' = \proj_{\sigma'}(\tau)$.
    \footnote{The equality $p_{\sigma}^{-1}(Y_{\subseteq \tau}) = \Conv_{\Delta}(\tau,\tau')$ was suggested by Bernhard Mühlherr in personal communication.}
    In particular, since $\tau,\tau'$ are not opposite, this is contractible.
    \item For $(\tau,\tau')\in \OD(\Lk_{\Delta}(\sigma))$, $p_{\sigma}^{-1}(Y_{\preceq (\tau,\tau')})$ has a unique maximal element $(\tau,\tau'')$, where $\tau'' = \proj_{\sigma'}(\tau'')$.
    In particular, this is contractible.
\end{enumerate}
However, we were not able to prove items (2) and (3).
A proof of these two items would give an alternative proof that $\OPD(\Delta)_{\prec (\sigma,\sigma')}$ has the homotopy type of $\X(S) \ojoin \OPD(\Lk_{\Delta}(\sigma))$ using Theorem \ref{thm:wedgeDecomposition}.
\end{remark}

The upper-intervals $\PD(\Delta)_{\succ \sigma}$ and $\OPD(\Delta)_{\succ \sigma}$ seem harder to control.
We conjecture that they should behave nicely, in the sense that they are spherical.

\begin{conjecture}
Let $\Delta$ be a spherical building, and let $\sigma \in\Delta$ be any non-empty simplex.
Then the upper intervals $\OPD(\Delta)_{\succ \sigma}$ and $\PD(\Delta)_{\succ \sigma}$ are spherical of dimension $2\dim \Delta -\dim \sigma$.
In particular, $\OPD(\Delta)$ and $\PD(\Delta)$ are Cohen-Macaulay of dimension $2\dim \Delta + 1$.
\end{conjecture}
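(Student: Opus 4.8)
A plausible route is through the standard recursive criterion: a finite\-dimensional poset $P$ is Cohen--Macaulay if and only if it is pure and, for every $x\le y$ in the bounded poset $\widehat P=P\cup\{\hat 0,\hat 1\}$, the open interval $(x,y)$ has the homotopy type of a wedge of spheres of dimension $\ell(x,y)-2$, where $\ell(x,y)$ is the length of a longest chain in $[x,y]$. I would run this by induction on $\dim\Delta$, treating $\PD(\Delta)$ and $\OPD(\Delta)$ simultaneously, the case $\dim\Delta=0$ being trivial. Thus everything reduces to (a) purity and (b) sphericity, of the right dimension, of every interval of the two bounded posets.

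\emph{Purity.} The key point is to describe the covering relations across the two layers. If $c$ is a chamber, then $c\lessdot S$ in $\PD(\Delta)$ forces $S$ to be an apartment (any proper Levi sphere through $c$ can be enlarged to an apartment), and $c\lessdot(\tau_1,\tau_2)$ in $\OPD(\Delta)$ forces $(\tau_1,\tau_2)=(c,\op_\Sigma(c))$ for some $\Sigma\in\A(\Delta,c)$; moreover a non-chamber simplex is never covered by an element of the decomposition layer, since one can always refine by enlarging the simplex. Hence every maximal chain is a maximal chain of faces from a vertex to a chamber, concatenated with a maximal chain in $\D(\Delta)$ below an apartment (resp. in $\OD(\Delta)$ below a pair of opposite chambers), and purity of dimension $2\dim\Delta+1$ follows from purity of $\Delta$, of $\D(\Delta)$ and of $\OD(\Delta)$ (Theorems~\ref{thm:DisCM} and \ref{thm:vonHeydebreck}).

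\emph{The intervals we already control.} In $\widehat{\PD(\Delta)}$ one has: $(\hat 0,\hat 1)=\PD(\Delta)$ spherical by Theorem~\ref{thm:OPDtoPD}; $(\hat 0,\tau)$ for a simplex $\tau$, and $(\tau_1,\tau_2)$ for simplices $\tau_1\subsetneq\tau_2$, are boundaries of simplices; $(\hat 0,S)=\PD(\Delta)_{\prec S}$ spherical by Theorem~\ref{thm:PDandCB}; $(S,\hat 1)=\D(\Delta)_{>S}$ spherical by Proposition~\ref{prop:upperIntervalsDecompDelta}; and an interval between two Levi spheres is an open interval of the Cohen--Macaulay poset $\D(\Delta)$. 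The corresponding intervals in $\widehat{\OPD(\Delta)}$ are handled by Theorem~\ref{thm:OPDandDeltaDelta}, Lemma~\ref{lm:intervalsBuildingCase} and Theorem~\ref{thm:vonHeydebreck}. In each case one checks, using the length computations above, that the homotopical dimension matches $\ell(x,y)-2$.

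\emph{What remains, and the main obstacle.} The open cases — which contain the conjectured statement — are the upper intervals $\PD(\Delta)_{\succ\sigma}$ and $\OPD(\Delta)_{\succ\sigma}$ over a simplex $\sigma$, and the ``mixed'' open intervals $(\sigma,S)$ in $\PD(\Delta)$ and $(\sigma,(\tau_1,\tau_2))$ in $\OPD(\Delta)$. For the $\OPD$ upper interval the expected identification is $\OPD(\Delta)_{\succ\sigma}\simeq\X(\Delta)_{>\sigma}\ojoin\X(\Delta)^{\op}\simeq\Lk_\Delta(\sigma)*\Delta$, which is spherical of dimension $(\dim\Delta-\dim\sigma-1)+\dim\Delta+1=2\dim\Delta-\dim\sigma$; one would try to prove it by showing that the restriction of the map $\phi$ of Theorem~\ref{thm:OPDandDeltaDelta} is a homotopy equivalence via Quillen's fiber Theorem~\ref{thm:quillen}. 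For $\PD(\Delta)_{\succ\sigma}$ and for the mixed intervals the plan is analogous, using the projection $\proj_\sigma\colon\Delta\to\Lk_\Delta(\sigma)$ to build an order-preserving map to a join of a sphere with the $\PD$-, $\OPD$-, $\D$- or $\OD$-poset of $\Lk_\Delta(\sigma)$, so that Quillen's theorem plus induction and Lemma~\ref{lem:lowerIntervalsDecompDelta} give sphericity of the right dimension; the fibers over lower intervals would be written as unions of the subposets attached to the apartments through $\sigma$, and their multi-intersections shown contractible by convexity, in the spirit of Lemma~\ref{lm:contractibleIntersectionApartments} and \cite[Th\'eor\`eme~2.1]{Serre}. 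The hard part is exactly this last step: in the lower-interval computations of Theorems~\ref{thm:PDandCB} and~\ref{thm:OPDtoPD} an opposite pair that is comparable with a reference simplex lies, together with that simplex, in a single apartment and is rigid under opposition, so the fibers stay simple (typically cones), whereas here $\proj_\sigma$ does \emph{not} carry opposite pairs of $\Delta$ to opposite pairs of $\Lk_\Delta(\sigma)$ — a Levi sphere $S$ with $\sigma\prec S$ need not contain $\sigma$ — and the decomposition part of a fiber is ``open-ended'' rather than a cone, with multi-intersections that are not obviously contractible. I expect that overcoming this requires, as in Theorem~\ref{thm:OPDandDeltaDelta}(1), first establishing the statement for the rank-selected pieces of the upper intervals, peeling off the decomposition layer by height and controlling how the walls through a pair of opposite simplices meet $\sigma$; making that filtration argument work is the essential difficulty, and is why the statement is recorded only as a conjecture.
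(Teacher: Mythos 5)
There is a genuine gap, and it is worth saying up front that this statement is not proved in the paper at all: it is recorded there precisely as an open conjecture, with the authors remarking that the upper intervals $\OPD(\Delta)_{\succ \sigma}$ and $\PD(\Delta)_{\succ \sigma}$ ``seem harder to control.'' Your proposal does not close that gap either. The reductions you carry out are reasonable and essentially correct as far as they go: the covering analysis giving purity of dimension $2\dim\Delta+1$, the catalogue of intervals already handled by Theorems \ref{thm:PDandCB}, \ref{thm:OPDtoPD}, \ref{thm:OPDandDeltaDelta}, Proposition \ref{prop:upperIntervalsDecompDelta}, Lemma \ref{lm:intervalsBuildingCase} and Theorem \ref{thm:vonHeydebreck}, and the observation that the only missing intervals are $\PD(\Delta)_{\succ\sigma}$, $\OPD(\Delta)_{\succ\sigma}$ and the mixed intervals $(\sigma,S)$ and $(\sigma,(\tau_1,\tau_2))$. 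The expected answer $\OPD(\Delta)_{\succ\sigma}\simeq \X(\Delta)_{>\sigma}\ojoin\X(\Delta)^{\op}$ is also the right guess dimension-wise. But the step that would actually prove the conjecture --- contractibility of the Quillen fibers of the restricted map $\phi$ (or of the $\proj_\sigma$-type maps), i.e.\ sphericity of exactly these intervals --- is only described as a plan, and you yourself concede that the multi-intersection/filtration argument needed to make it work is not carried out. Since the Cohen--Macaulay conclusion rests entirely on those intervals, nothing in the proposal establishes the statement; it is a (correct) strategy outline plus an accurate diagnosis of the obstruction (projection to the link does not preserve opposition, so the decomposition-layer parts of the fibers are not cones), not a proof.

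To be clear about what would be needed: either a direct proof that the restriction of $\phi$ to $\OPD(\Delta)_{\succ\sigma}$ has contractible fibers over lower intervals of $\X(\Delta)_{>\sigma}\ojoin\X(\Delta)^{\op}$ (the analogue of the $V_\Sigma$-covering and Lemma \ref{lm:contractibleIntersectionApartments} now with all apartments constrained to pass through $\sigma$ as well), or some other device --- e.g.\ the unproved items (2) and (3) in the Remark following Theorem \ref{thm:OPDtoPD} concerning $p_\sigma$, suitably adapted to upper rather than lower intervals --- would have to be supplied. Until one of these is done, the conjecture remains open, and your write-up should be presented as a reduction and a proposed attack, not as a proof.
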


\section{\texorpdfstring{$\OPD$}{OPD} and \texorpdfstring{$\PD$}{PD} posets for algebraic groups}
\label{sec:algebraicgroups}

In this section, we specialize the construction of decomposition and partial decomposition posets to buildings arising from rational points of arbitrary connected reductive groups.
Our main references are \cite{Borel, BT}. We sometimes 
refrain from quoting results from the preceding sections and
instead present the arguments again in the language of groups. 
We do so in order to facilitate for readers coming from group theory the
identification of the structures. We point to the 
respective results from the preceding section when we
provide these parallel proofs.

Throughout this section, $\GG$ will denote a connected reductive linear group defined over the algebraic closure of some field $k$, and which is split over $k$ (this means that $\GG$ contains a maximal split $k$-torus, see below for definitions).
We write $\GG(k)$ for the set of $k$-points of $\GG$.
Every finite group of Lie type is of the form $\GG(k)$ except for the ``very twisted" groups, namely $^2B_2(q)$, $^2G_2(q)$, and $^2F_4(q)$ (see \cite[1.6.2]{GM}).
The latter can be obtained as fixed-point subgroups of Steinberg endomorphisms.
In this section, however, we will focus on $k$-rational points of algebraic groups.
Analogous results hold as well for pairs $(\GG,F)$ when $\GG$ is defined over the algebraic closure of a finite field and $F$ is a Steinberg endomorphism of $\GG$. The reader can verify that most of our assertions in this section extend to the setting of Steinberg endomorphisms.

For an arbitrary algebraic group $H$, we write $R_u(H)$ for the unipotent radical of $H$.
A Borel subgroup of $\GG$ is a closed, connected, solvable subgroup that is maximal with respect to all these properties.
Borel subgroups always exist, and they are all $\GG$-conjugate.
A parabolic subgroup of $\GG$ is any subgroup of $\GG$ containing a Borel subgroup.
A Levi complement in a parabolic subgroup $P$ is a complement to $R_u(P)$.
That is, a subgroup $L\leq P$ such that $P = R_u(P)\rtimes L$.
By a Levi subgroup of $\GG$ we mean a Levi complement in some parabolic subgroup.
In what follows, when we speak about parabolic or Levi subgroups, we will always mean proper subgroups, unless we specifically say otherwise.
If we want to emphasize that a given Levi subgroup is the Levi complement in a given (parabolic) subgroup, we will explicitly say the word ``complement".
For instance, if $P$ is a parabolic subgroup of $\GG$ and $L$ is a Levi complement in $P$, then we say that $L$ is a Levi subgroup of $Q$, for any $P \subseteq Q \subseteq \GG$.

%%% k-subgroups
By a $k$-subgroup of $\GG$ we mean a subgroup $H$ of $\GG$ that is defined over $k$, and $H(k)$ denotes the set of $k$-points of $H$, that is, the subgroup of $H$ consisting of those elements that are defined over $k$.
A maximal split $k$-torus of $\GG$ is a maximal torus $T$ of $\GG$ that is defined over $k$ and is contained in some Borel subgroup of $\GG$ also defined over $k$.
The assumption on $\GG$ being split over $k$ assures the existence of such a $T$.

It is well-known that the $k$-point subgroup $\GG(k)$ has a BN-pair and hence an associated building, which we denote by $\Delta(\GG,k)$.

\begin{theorem}
The group $\GG(k)$ has a BN-pair $(B(k),N(k))$ where $B$ is a $k$-Borel subgroup of $\GG$, and $N = N_{\GG}(T)$ is the normalizer of a maximal split $k$-torus $T$ of $B$.
\end{theorem}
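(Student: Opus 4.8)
The plan is to verify the Tits system axioms directly for the pair $(B(k),N(k))$ inside $\GG(k)$, using the structure theory of split reductive groups. Since $\GG$ is split over $k$, fix a maximal split $k$-torus $T$ contained in a $k$-Borel subgroup $B$ and set $N=N_\GG(T)$. The root datum of $(\GG,T)$ is then defined over $k$: each root subgroup $U_\alpha$ is defined over $k$ and $k$-isomorphic to $\mathbb{G}_a$, the unipotent radical $U:=R_u(B)$ is the directed product of the $U_\alpha$ over the roots $\alpha$ positive with respect to $B$, and $B=T\ltimes U$. All of this descends to $k$-points, so $B(k)=T(k)\ltimes U(k)$; and because $T$ is $k$-split, the natural map $N(k)/T(k)\to W:=N/T$ is an isomorphism onto the full Weyl group, which carries the Coxeter structure $(W,S)$ whose simple reflections $S=\{s_\alpha \tq \alpha \text{ simple}\}$ are singled out by $B$.

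First I would dispose of the elementary axioms. One has $B\cap N=T$, because an element of $B$ normalizing $T$ preserves the set of positive roots and hence lies in $T$; taking $k$-points gives $B(k)\cap N(k)=T(k)$, which is normal in $N(k)$, with quotient $W=\langle S\rangle$ generated by involutions. The remaining axioms, together with $\GG(k)=\langle B(k),N(k)\rangle$ and the disjointness of the double cosets, all rest on the Bruhat decomposition over $k$: $\GG(k)=\bigsqcup_{w\in W}B(k)\,\dot w\,B(k)$, where $\dot w\in N(k)$ is any representative of $w$. For split reductive groups this is classical (Chevalley; see \cite{Borel, BT}): since $T$, the $U_\alpha$, $B$, and representatives for the elements of $W$ are all defined over $k$, the standard proof runs verbatim at the level of $k$-points.

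Granting Bruhat, the exchange axiom $s\,B(k)\,w\subseteq B(k)\,w\,B(k)\cup B(k)\,sw\,B(k)$ for $s\in S$ reduces, by the usual manipulation with the cell decomposition, to the rank-one identity $U_{-\alpha_s}(k)\subseteq U_{\alpha_s}(k)\,\dot s\,U_{\alpha_s}(k)\cup T(k)\,U_{\alpha_s}(k)$ inside the rank-one subgroup generated by $U_{\pm\alpha_s}$; this is an explicit $\SL_2$- or $\PGL_2$-type computation valid over $k$ precisely because $U_{\pm\alpha_s}\cong\mathbb{G}_a$ over $k$. The last axiom $\dot s\,B(k)\,\dot s^{-1}\neq B(k)$ holds since conjugation by $\dot s$ sends $U_{\alpha_s}$ to $U_{-\alpha_s}$, which is not contained in $B$. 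Assembling these verifications shows that $(B(k),N(k))$ is a BN-pair for $\GG(k)$ with Coxeter system $(W,S)$. The step requiring the most care is the descent to $k$-points --- that the Bruhat cells and the rank-one subgroups over $k$ behave exactly as over $\overline{k}$ --- which is exactly where the hypothesis that $\GG$ is split over $k$ is used; once the split structure theory is in hand, the classical argument of Chevalley and Tits applies over $k$ without change.
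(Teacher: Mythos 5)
Your verification is essentially correct, but it is worth noting that the paper gives no proof at all here: the statement is quoted as a well-known fact, implicitly resting on the Borel--Tits relative BN-pair theorem (see \cite[\S 21]{Borel} and \cite{BT}), which produces a Tits system $(P_0(k),N_{\GG}(S)(k))$ for \emph{any} connected reductive $\GG$, with $S$ a maximal $k$-split torus and $P_0$ a minimal $k$-parabolic containing it; since $\GG$ is assumed split, minimal $k$-parabolic subgroups are $k$-Borel subgroups and $S=T$ is a maximal torus, which is exactly the stated theorem. Your route is instead the classical Chevalley-style direct verification of the Tits axioms over $k$, using the split structure theory ($U_\alpha\cong\mathbb{G}_a$ over $k$, $B(k)=T(k)\ltimes U(k)$, $N(k)/T(k)\cong W$ with representatives of the simple reflections rational, the rational Bruhat decomposition, and a rank-one computation). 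That is a legitimate, more self-contained argument in the split setting, at the cost of redoing standard material; the citation to Borel--Tits buys the statement (and its non-split generalization) with no work, which is all the paper needs. One small correction to your sketch: the displayed rank-one containment must carry a torus factor, i.e.\ $U_{-\alpha_s}(k)\setminus\{1\}\subseteq U_{\alpha_s}(k)\,T(k)\,\dot s\,U_{\alpha_s}(k)$ --- already in $\SL_2$ a nontrivial lower unipotent element is a product $u(a)\,h\,\dot s\,u(b)$ with a nontrivial diagonal $h$ in general, and cannot be written as $u(a)\,\dot s\,u(b)$ alone --- but since $T(k)\subseteq B(k)$ this does not affect the double-coset inclusion $\dot sB(k)\dot w\subseteq B(k)\dot s\dot wB(k)\cup B(k)\dot wB(k)$ you are deriving from it.
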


Recall that the simplices of $\Delta(\GG,k)$ are exactly the parabolic subgroups of $\GG$ that are defined over $k$, which we shall call $k$-parabolic subgroups.
The face poset $\X(\Delta(\GG,k))$ is then the poset of (proper) $k$-parabolic subgroups of $\GG$ with order given by reverse inclusion.
A $k$-Levi subgroup is a Levi subgroup of $\GG$ that is defined over $k$ and is the Levi complement in some $k$-parabolic subgroup.
The unipotent radical of a $k$-parabolic subgroup is also defined over $k$.

Recall that any Levi subgroup of $\GG$ is itself a connected reductive subgroup.
Hence, we want to relate Levi subgroups and parabolic subgroups of a fixed Levi subgroup of $\GG$, with those of $\GG$.
The following proposition provides information on this relation.

\begin{proposition}
\label{prop:Levi_and_subLevi}
Let $\GG$ be a connected reductive algebraic group over $k$, and let $P\in \Delta(\GG,k)$.
\begin{enumerate}
    \item Every maximal split $k$-torus in $P$ is contained in a unique Levi complement in $P$.
    Moreover, such Levi complements are defined over $k$ and are regularly permuted by $R_u(P)(k)$.
    \item If $P,Q\in \Delta(\GG,k)$ and $Q\subseteq P$, then for any $k$-Levi complement $M$ in $Q$ there is a unique $k$-Levi complement $L$ in $P$ such that $M\subseteq L$.
    \item If $L$ is a $k$-Levi subgroup of $\GG$ and $M\subseteq L$, then $M$ is a $k$-Levi subgroup of $L$ if and only if $M$ is a $k$-Levi subgroup of $\GG$.
\end{enumerate}
\end{proposition}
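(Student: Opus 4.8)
The plan is to reduce everything to the standard Borel--Tits structure theory of parabolic and Levi subgroups (see \cite{Borel, BT}), which supplies the two inputs I will use repeatedly. \emph{First}: every $k$-parabolic admits a $k$-Levi complement, the Levi complements of a parabolic $P$ form a single orbit under conjugation by $R_u(P)$, and the orbit is \emph{regular}, because if $L$ is a Levi complement of $P$ containing a maximal split $k$-torus $S$ of $\GG$, then $S$ acts with only nonzero weights on the successive quotients of an $S$-stable central filtration of the $k$-split unipotent group $R_u(P)$, whence $C_{R_u(P)}(S)=1$. \emph{Second}: if $L$ is a Levi complement of $P$ and $Q\subseteq P$ is a parabolic of $\GG$, then $R_u(P)\subseteq Q$, the group $Q\cap L$ is a parabolic subgroup of $L$ with $R_u(Q\cap L)=R_u(Q)\cap L$, and $Q\mapsto Q\cap L$ is a poset isomorphism from the parabolics of $\GG$ contained in $P$ onto the parabolics of $L$; this isomorphism is defined over $k$ and is compatible with Levi complements, in the sense that a Levi complement $M$ of $Q$ with $M\subseteq L$ is precisely a Levi complement of $Q\cap L$.

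For part (1), I would first note that a maximal split $k$-torus $S$ of $P$ lies in a $k$-Levi complement of $P$: take a fixed $k$-Levi complement $L_0$, write an element of $P(k)=R_u(P)(k)\rtimes L_0(k)$ conjugating a maximal split $k$-torus of $L_0$ onto $S$ as $u\ell$ with $u\in R_u(P)(k)$, $\ell\in L_0(k)$, and conjugate $L_0$ by $u$ (here I use the $P(k)$-conjugacy of maximal split $k$-tori of $P$). Uniqueness of the Levi complement of $P$ containing $S$ is immediate from $C_{R_u(P)}(S)=1$. Finally $R_u(P)(k)$ acts on the $k$-Levi complements of $P$ transitively, since any two are $R_u(P)$-conjugate and $H^{1}(k,R_u(P))=0$ for the $k$-split unipotent group $R_u(P)$, and freely, since the stabilizer in $R_u(P)$ of a $k$-Levi complement containing $S$ is $C_{R_u(P)}(S)=1$; so the action is regular.

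For part (2), choose a maximal split $k$-torus $S$ of $\GG$ inside the $k$-Levi complement $M$ of $Q$, so $S\subseteq M\subseteq Q\subseteq P$, and let $L$ be the unique $k$-Levi complement of $P$ containing $S$ from part (1). The composite $\pi\colon M\hookrightarrow P\twoheadrightarrow P/R_u(P)\cong L$ is injective, because $M\cap R_u(P)\subseteq M\cap R_u(Q)=1$, and its image is a $k$-Levi complement of $Q\cap L$ containing $S$; by the second input this image is in turn a $k$-Levi complement of $Q$ containing $S$, so by part (1) applied to $Q$ it equals $M$. Hence $M=\pi(M)\subseteq L$. Uniqueness is clear: any $k$-Levi complement of $P$ containing $M$ contains $S$, hence is $L$.

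For part (3): if $M$ is a $k$-Levi subgroup of $L$, then $M$ is a $k$-Levi complement of a $k$-parabolic $P'$ of $L$; by the second input $P'=Q\cap L$ for a unique $k$-parabolic $Q$ of $\GG$ with $Q\subseteq P$, and $M$, being a Levi complement of $Q\cap L$ contained in $L$, is a $k$-Levi complement of $Q$, hence a $k$-Levi subgroup of $\GG$. Conversely, if $M\subseteq L$ is a $k$-Levi subgroup of $\GG$, pick a maximal split $k$-torus $S\subseteq M\subseteq L\subseteq P$ and pass to the root system $\Phi=\Phi(\GG,S)$: here $P=P_{\Psi}$ for a parabolic subset $\Psi$, $L=L_{\Psi\cap-\Psi}$, and $\Phi_M:=\{\,a: U_a,U_{-a}\subseteq M\,\}$ is a Levi subsystem of $\Phi$ contained in $\Phi_L:=\Psi\cap-\Psi$, hence a Levi subsystem of the root system $\Phi_L$. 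Choosing a parabolic subset $\Theta$ of $\Phi_L$ with $\Theta\cap-\Theta=\Phi_M$ and setting $\Psi':=\Theta\cup(\Psi\setminus-\Psi)$, the cocharacter description of $\Psi$ shows that $\Psi'$ is a parabolic subset of $\Phi$ with $\Psi'\subseteq\Psi$ and $\Psi'\cap-\Psi'=\Phi_M$; then $Q:=P_{\Psi'}$ is a $k$-parabolic with $Q\subseteq P$ whose $k$-Levi complement containing $S$ is $L_{\Phi_M}=M$, and the second input identifies $M$ with a $k$-Levi complement of the $k$-parabolic $Q\cap L$ of $L$. I expect the main obstacle to be the $k$-rationality bookkeeping, and concretely the converse half of part (3): realizing the given $M$ as a $k$-Levi complement of a $k$-parabolic that is actually contained in $P$, where the root-combinatorial construction of $\Psi'$ (and the verification that $\Theta\cup(\Psi\setminus-\Psi)$ is again parabolic) carries the real content, the rest being formal consequences of parts (1)--(2) and the standard correspondence.
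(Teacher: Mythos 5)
Your proposal is correct, but it takes a different route from the paper: the paper disposes of the proposition almost entirely by citation, getting item (1) from \cite[3.14]{BT} and items (2)--(3) from Proposition 4.1 of \cite{DM} (observing that its proof works over any algebraically closed field), with the $k$-rational statements then deduced from item (1) because $L$ and $M$ share a maximal split $k$-torus. You instead re-derive the statements from the underlying Borel--Tits structure theory: (1) via conjugacy of maximal split $k$-tori under $P(k)$, triviality of $C_{R_u(P)}(S)$, and $H^1(k,R_u(P))=0$ for the $k$-split unipotent radical; (2) by projecting $M$ along $P\twoheadrightarrow P/R_u(P)\cong L$ and invoking the uniqueness in (1); and (3) via the correspondence $Q\mapsto Q\cap L$ together with an explicit parabolic subset $\Psi'=\Theta\cup(\Psi\setminus-\Psi)$, whose verification (closedness, $\Psi'\cap-\Psi'=\Phi_M$, $\Psi'\subseteq\Psi$) does go through. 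What your approach buys is a self-contained treatment in which the $k$-rationality is tracked explicitly rather than outsourced; what the paper's approach buys is brevity and avoidance of the root-combinatorial bookkeeping, since \cite{DM} already contains it. Two small steps in your write-up deserve a line each if written out: the uniqueness in (1) is not literally immediate from $C_{R_u(P)}(S)=1$ --- one first notes that if $L$ and $uLu^{-1}$ both contain $S$ with $u\in R_u(P)$, then comparing images modulo $R_u(P)$ gives $u^{-1}Su=S$, and then $[u,S]\subseteq S\cap R_u(P)=1$ forces $u\in C_{R_u(P)}(S)=1$; and in (2) the claim that $\pi(M)$ is a Levi complement of $Q\cap L$ needs the short computation $\pi(M)\cdot R_u(Q\cap L)=Q\cap L$ and $\pi(M)\cap R_u(Q\cap L)=1$ (using $R_u(P)\subseteq R_u(Q)$ and $M\cap R_u(Q)=1$). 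Finally, the converse half of (3), which you single out as carrying the real content, admits a slicker formulation equivalent to your construction: writing $M=C_{\GG}(\lambda)$ for a $k$-cocharacter $\lambda$ of the split torus $S\subseteq M$, one gets $M=L\cap C_{\GG}(\lambda)=C_L(\lambda)$, which is a $k$-Levi subgroup of $L$; this bypasses the verification that $\Theta\cup(\Psi\setminus-\Psi)$ is parabolic.
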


\begin{proof}
Item (1) follows from \cite[3.14]{BT}.

Items (2) and (3) follow from Proposition 4.1 of \cite{DM} by noting that the proof of such a proposition extends to arbitrary algebraic closed fields.
The $k$-rational version follows from item (1) as $L$ and $M$ share a maximal split $k$-torus.
\end{proof}

The following result is well-known, and it relates the opposition relation in $\Delta(\GG,k)$ with intrinsic algebraic properties of parabolic subgroups.

\begin{lemma}
\label{lm:oppositionAndParabolics}
Two $k$-parabolic subgroups are opposite in $\Delta(\GG,k)$ (as simplices there) if and only if their intersection is a $k$-Levi complement in both.
Moreover, given $P\in \Delta(\GG,k)$, there is a one-to-one correspondence between $k$-parabolic subgroups opposite to $P$ and $k$-Levi complements in $P$.
The isomorphism is given by $(P,Q)\in \OD(\Delta(\GG,k)) \mapsto P\cap Q$.
In particular, $R_u(P)(k)$ acts regularly on the set of $k$-parabolic subgroups opposite to $P$.
\end{lemma}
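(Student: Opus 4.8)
The plan is to derive the lemma from the description of opposite parabolics in terms of parabolic subsets of roots, together with Proposition~\ref{prop:Levi_and_subLevi}(1), which controls Levi complements defined over $k$. Recall that the apartments of $\Delta(\GG,k)$ are the subcomplexes $A_S$ whose simplices are the $k$-parabolics containing a fixed maximal split $k$-torus $S$. As a Coxeter complex, $A_S$ is the unit sphere of $X_*(S)_{\mathbb{R}}$ equipped with the reflection arrangement of $\Phi=\Phi(\GG,S)$, so its opposition involution is $-\mathrm{id}$; on simplices this reads $P_\Psi\mapsto P_{-\Psi}$, where $P_\Psi$ is the $k$-parabolic associated with a parabolic subset $\Psi\subseteq\Phi$ (with $\Phi(R_u(P_\Psi),S)=\Psi\setminus(-\Psi)$ and Levi roots $\Psi\cap(-\Psi)$).

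First I would prove the equivalence: $P$ and $Q$ are opposite as simplices of $\Delta(\GG,k)$ if and only if $P\cap Q$ is a common $k$-Levi complement. For the forward direction, opposite simplices lie in a common apartment $A_S$, so $S\subseteq P\cap Q$ and $P=P_\Psi$, $Q=P_{-\Psi}$; hence $P\cap Q=P_\Psi\cap P_{-\Psi}$ is the reductive subgroup $L$ with $\Phi(L,S)=\Psi\cap(-\Psi)$, which is a Levi complement in both $P_\Psi$ and $P_{-\Psi}$, and it is defined over $k$, being the Levi complement in the $k$-parabolic $P$ that contains the maximal split $k$-torus $S$ (Proposition~\ref{prop:Levi_and_subLevi}(1)). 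Conversely, if $M:=P\cap Q$ is a common $k$-Levi complement, pick a maximal split $k$-torus $S\subseteq M$; it is maximal split in $\GG$, so $S\subseteq P\cap Q$, and in $A_S$ we have $P=P_\Psi$, $Q=P_{\Psi'}$ with $\Phi(M,S)=\Psi\cap\Psi'$. Since $M$ is the unique Levi complement in $P$ containing $S$ (Proposition~\ref{prop:Levi_and_subLevi}(1)), $\Psi\cap\Psi'=\Psi\cap(-\Psi)$, and symmetrically $\Psi\cap\Psi'=\Psi'\cap(-\Psi')$. A short case analysis with $\Psi\cup(-\Psi)=\Phi=\Psi'\cup(-\Psi')$ then forces $\Psi'=-\Psi$: a root of $\Psi'$ lying in $\Psi$ lies in $\Psi\cap\Psi'\subseteq-\Psi$, and one not in $\Psi$ lies in $-\Psi$ anyway, so $\Psi'\subseteq-\Psi$; the symmetric argument gives $\Psi\subseteq-\Psi'$, i.e.\ $-\Psi\subseteq\Psi'$. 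Thus $Q$ is the opposition image of $P$ in $A_S$, so the two are opposite simplices. (Alternatively one may cite that the algebraic and building-theoretic notions of opposition coincide; see \cite{AB,BT,Borel}.)

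Next comes the bijection. By the equivalence just proved, $Q\mapsto P\cap Q$ maps the set of $k$-parabolics opposite to $P$ into the set of $k$-Levi complements in $P$. For the inverse I would invoke the standard fact that for a parabolic $P$ and a Levi complement $M$ in $P$ there is a unique parabolic $P^-$ with $P\cap P^-=M$, that $P^-$ is automatically opposite to $P$, and that $P^-$ is defined over $k$ whenever $P$ and $M$ are (the opposite parabolic relative to a common $k$-Levi; cf.\ \cite{BT} and Proposition~\ref{prop:Levi_and_subLevi}(1)). Then $M\mapsto P^-$ is a two-sided inverse to $Q\mapsto P\cap Q$: indeed $P\cap P^-=M$ holds by construction, and any $k$-parabolic $Q$ opposite to $P$ with $P\cap Q=M$ must equal $P^-$ by this uniqueness. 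Finally, for the action statement: $R_u(P)(k)\le P$ normalizes $P$, and for $g\in R_u(P)(k)$ one has $P\cap gQg^{-1}=g(P\cap Q)g^{-1}$, again a $k$-Levi complement in $P$, so $gQg^{-1}$ is opposite to $P$ and the bijection $Q\mapsto P\cap Q$ is $R_u(P)(k)$-equivariant; since $R_u(P)(k)$ acts regularly on the set of $k$-Levi complements in $P$ by Proposition~\ref{prop:Levi_and_subLevi}(1), it acts regularly on the set of $k$-parabolics opposite to $P$.

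I expect the point needing the most care to be the identification, inside the first step, of building-theoretic opposition of simplices with the equality $\Psi'=-\Psi$ of parabolic subsets (equivalently, that the opposition involution of $A_S$ is $-\mathrm{id}$); if one cites this rather than reproving it, the remainder is a routine transport-of-structure argument. The only other delicate point is the $k$-rationality of the opposite parabolic attached to a $k$-Levi complement, which is standard but should be cited carefully in positive characteristic.
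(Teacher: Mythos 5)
Your argument is correct, but it is genuinely more than what the paper does: the paper's entire proof is a citation to \cite[20.5]{Borel} and \cite[4.8]{BT}, results which state the lemma essentially verbatim, whereas you reconstruct it from the root-theoretic formalism (parabolic subsets $\Psi$, the fact that the opposition involution of an apartment is $-\mathrm{id}$, so $P_\Psi\mapsto P_{-\Psi}$) together with Proposition \ref{prop:Levi_and_subLevi}(1). The trade-off is that your route makes the mechanism transparent and isolates exactly which standard inputs are used (uniqueness of the parabolic $P^-$ with $P\cap P^-=M$, and the description of $P_\Psi\cap P_{-\Psi}$ as the standard Levi), while the paper's citation silently absorbs the rationality issues you have to handle by hand. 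Two such points in your write-up deserve to be pinned down rather than asserted: (i) in the converse direction and in the surjectivity of $Q\mapsto P\cap Q$ you need that an arbitrary $k$-Levi complement $M$ of a $k$-parabolic $P$ contains a maximal split $k$-torus of $\GG$ (equivalently, that "defined over $k$" and "contains a maximal split $k$-torus" cut out the same set of Levi complements); this follows, e.g., from the vanishing of $H^1(k,R_u(P))$ for the split unipotent group $R_u(P)$, or directly from \cite[4.8]{BT}, but Proposition \ref{prop:Levi_and_subLevi}(1) as stated only speaks of Levi complements containing maximal split $k$-tori, so your appeal to it for the regularity of the $R_u(P)(k)$-action on all $k$-Levi complements tacitly uses this identification; (ii) the $k$-rationality of $P^-$, which you correctly flag and which again is covered by \cite{BT}. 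With those two citations made explicit, your proof is a complete and self-contained replacement for the paper's reference.
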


\begin{proof}
See \cite[20.5]{Borel} and \cite[4.8]{BT}.
\end{proof}

By the previous lemma, we see that $\OD(\Delta(\GG,k))$ is naturally isomorphic to the poset of pairs $(P,L)$ where $P$ is a $k$-parabolic subgroup and $L$ a $k$-Levi complement of $P$.

On the other hand, as noticed by Serre \cite[3.1.7]{Serre}, $k$-Levi subgroups correspond exactly to Levi spheres of $\Delta(\GG,k)$. 
Below we give a more concrete description of this connection.

\begin{lemma}
[Serre]
\label{lm:LeviSubgroupsAndLeviSpheres}
The map $L \mapsto \Delta(\GG,k)^L$ defines an order-preserving isomorphism from the poset of $k$-Levi subgroups of $\GG$ ordered by inclusion, onto the poset $\D(\Delta(\GG,k))$.
This map is $\GG(k)$-equivariant.
\end{lemma}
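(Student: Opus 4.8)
The plan is to show that the assignment $L\mapsto \Delta(\GG,k)^{L}$ is a well-defined order isomorphism from the poset of $k$-Levi subgroups onto $\D(\Delta(\GG,k))$, and that it is $\GG(k)$-equivariant. First I would unwind the fixed-point set: $L$ acts on $\Delta(\GG,k)$ by conjugation, so a $k$-parabolic $P$ is a simplex fixed by $L$ exactly when $L$ normalises $P$, and since $k$-parabolic subgroups are self-normalising this means $L\subseteq P$. Thus $\Delta(\GG,k)^{L}$ is the full subcomplex supported on the $k$-parabolics containing $L$. Since $L$ is a $k$-Levi subgroup, it is a Levi complement in some $k$-parabolic $P$; by Lemma~\ref{lm:oppositionAndParabolics}, $P$ has a unique opposite $P^{-}$ with $P\cap P^{-}=L$, so $P$ and $P^{-}$ are opposite simplices containing $L$ and $\Delta(\GG,k)^{L}$ is a nonempty subcomplex containing the Levi sphere $\Conv_{\Delta(\GG,k)}(P,P^{-})$. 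That $\Delta(\GG,k)^{L}$ \emph{equals} this Levi sphere is exactly Serre's observation \cite[3.1.7]{Serre}; internally one may see it by choosing a maximal split $k$-torus $S\subseteq L$, observing that every $k$-parabolic containing $L$ contains $S$ and so lies in the apartment $\Sigma$ attached to $S$, and checking inside $\Sigma$ that the simplices containing $L$ are precisely those lying on every wall $\partial\alpha$ whose root corresponds to a relative root of $L$ (using that $L$ is generated by $S$ together with the root groups of its relative roots, which are relative roots of $\GG$); hence $\Delta(\GG,k)^{L}$ is an intersection of walls of $\Sigma$, i.e.\ a Levi sphere. Equivariance is immediate, since $g\cdot\Delta(\GG,k)^{L}=\{\,gPg^{-1}\tq L\subseteq P\,\}=\{\,Q\tq gLg^{-1}\subseteq Q\,\}=\Delta(\GG,k)^{gLg^{-1}}$ for $g\in\GG(k)$.

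For monotonicity, if $L_{1}\subseteq L_{2}$ then every $k$-parabolic containing $L_{2}$ contains $L_{1}$, so $\Delta(\GG,k)^{L_{1}}\supseteq\Delta(\GG,k)^{L_{2}}$, which is the order on $\D(\Delta(\GG,k))$ by Definition~\ref{def:decompBuilding} (reverse inclusion). Conversely, if $\Delta(\GG,k)^{L_{1}}\supseteq\Delta(\GG,k)^{L_{2}}$, take a $k$-parabolic $P$ with Levi complement $L_{2}$ and its opposite $P^{-}$ with $P\cap P^{-}=L_{2}$ (Lemma~\ref{lm:oppositionAndParabolics}); since $P,P^{-}\in\Delta(\GG,k)^{L_{2}}\subseteq\Delta(\GG,k)^{L_{1}}$ we get $L_{1}\subseteq P\cap P^{-}=L_{2}$. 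So the assignment is order-preserving and order-reflecting; in particular it is injective.

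It remains to prove surjectivity. Given a nonempty Levi sphere $S$, Lemma~\ref{lm:convexIsleviSphere} gives opposite simplices $\sigma_{1},\sigma_{2}$ of $S$ with $\dim\sigma_{i}=\dim S$ and $S=\Conv_{\Delta(\GG,k)}(\sigma_{1},\sigma_{2})$; reading $\sigma_{1},\sigma_{2}$ as opposite $k$-parabolics $P_{1},P_{2}$, Lemma~\ref{lm:oppositionAndParabolics} makes $L:=P_{1}\cap P_{2}$ a $k$-Levi complement in each of $P_{1},P_{2}$, hence a $k$-Levi subgroup of $\GG$. A dimension argument shows $P_{1}$ is a minimal $k$-parabolic containing $L$: if $L\subseteq Q\subsetneq P_{1}$ with $Q$ a $k$-parabolic, then $R_{u}(P_{1})\subseteq R_{u}(Q)$ and, by the structure theory of parabolic subgroups \cite{Borel,BT}, $L$ is contained in a Levi complement $M$ of $Q$, whence $\dim L\leq\dim M=\dim Q-\dim R_{u}(Q)\leq\dim Q-\dim R_{u}(P_{1})<\dim P_{1}-\dim R_{u}(P_{1})=\dim L$, a contradiction. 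So $P_{1}$ (and likewise $P_{2}$) is a maximal simplex of $\Delta(\GG,k)^{L}$; since $\Delta(\GG,k)^{L}$ is a Levi sphere containing the opposite maximal pair $P_{1},P_{2}$, Lemma~\ref{lm:convexIsleviSphere} forces $\Delta(\GG,k)^{L}=\Conv_{\Delta(\GG,k)}(P_{1},P_{2})=S$. Hence the map is bijective, and combined with the previous paragraph it is an order isomorphism, which is $\GG(k)$-equivariant as shown.

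The step I expect to be the main obstacle is the identification in the first paragraph of $\Delta(\GG,k)^{L}$ with a Levi sphere; everything else is formal bookkeeping once Lemmas~\ref{lm:oppositionAndParabolics} and \ref{lm:convexIsleviSphere} and the self-normalising property of parabolics are available, but making the internal argument fully rigorous — matching the root-group description of $L$ with the wall combinatorics of the apartment $\Sigma$, and passing carefully between $L$ and its relative root system — is delicate, which is why I would ultimately invoke \cite[3.1.7]{Serre} (together with the structure theory of \cite{Borel,BT}) for it.
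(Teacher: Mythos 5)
Your proposal is correct, and its bookkeeping (fixed points of $L$ are the $k$-parabolics containing $L$ by self-normalization, order-reflection via an opposite pair with intersection $L$, surjectivity by taking $L=P_1\cap P_2$ for a maximal opposite pair in the given Levi sphere) runs parallel to the paper's. Where you genuinely diverge is at the central step, the identification of $\Delta(\GG,k)^L$ with a Levi sphere: you either quote \cite[3.1.7]{Serre} outright or sketch a root-theoretic argument — every parabolic containing $L$ contains a maximal split $k$-torus $T\subseteq L$, hence lies in the apartment $\Sigma_T$, and since the relative root system of $L$ is symmetric, the simplices containing $L$ are exactly those lying on all walls $\partial\alpha$ for $\alpha$ a root of $L$, so $\Delta(\GG,k)^L$ is an intersection of walls. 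The paper instead argues metrically: $\Delta(\GG,k)^L$ lies in an apartment, each of its maximal simplices has an opposite in it by Lemma \ref{lm:oppositionAndParabolics}, it is convex because $L$ fixes geodesics between non-opposite fixed points in the $\CAT(1)$ realization, and then Lemma \ref{lm:convexIsleviSphere} applies. Your wall-combinatorics route is more explicit about the split hypothesis and gives the wall description directly, while the paper's convexity argument avoids relative-root bookkeeping and fits the geometric toolkit used elsewhere in the paper; citing Serre is legitimate here since the lemma is attributed to him, though the paper's stated purpose is precisely to give an internal verification. Your surjectivity step also differs in detail: you prove by a dimension count (using $R_u(P_1)\subseteq R_u(Q)$ and that a Levi subgroup inside a parabolic $Q$ lies in a Levi complement of $Q$) that $P_1$ is a minimal $k$-parabolic containing $L$, whereas the paper reads off from Proposition \ref{prop:Levi_and_subLevi}(1) that the maximal simplices of $\Delta(\GG,k)^L$ are exactly the parabolics with Levi complement $L$. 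Two small points you leave implicit: the containment of $L$ in a Levi complement of $Q$ needs the root-group argument (or a precise citation in \cite{Borel,BT}), and when you apply Lemma \ref{lm:convexIsleviSphere} you use that maximal simplices of the Levi sphere $\Delta(\GG,k)^L$ have dimension equal to $\dim\Delta(\GG,k)^L$ (purity of wall intersections); both are standard and do not affect correctness.
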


\begin{proof}
Note that the maximal simplices of $\Delta(\GG,k)^L$ are exactly the set of $k$-parabolic subgroups of $\GG$ in which $L$ is a Levi complement by Proposition \ref{prop:Levi_and_subLevi}(1). Moreover,
\begin{equation}
\label{eq:fixedPointLevi}
 \Delta(\GG,k)^L = \{ P \in \Delta(\GG,k) \tq P\supseteq L\}
\end{equation}
since parabolic subgroups are self-normalizing.
This shows that $\Delta(\GG,k)^L$ is a subcomplex of $\Delta(\GG,k)$.
In particular, for every maximal simplex $P\in \Delta(\GG,k)^L$ there exists a unique opposite $Q\in \Delta(\GG,k)^L$ such that $P\cap Q = L$ by Lemma \ref{lm:oppositionAndParabolics}.
In fact, as every parabolic in $\Delta(\GG,k)^L$ contains $L$, which contains a maximal split $k$-torus, $\Delta(\GG,k)^L$ is a subcomplex of some apartment.
This means that every simplex of $\Delta(\GG,k)^L$ has exactly one opposite there.
Moreover, $\Delta(\GG,k)^L$ is convex as $L$ must fix every geodesic between two non-opposite points in $\Delta(\GG,k)^L$ (note that here being convex is equivalent to the geometric notion of convexity in terms of geodesics of the metric space $|\Delta(\GG,k)|$ with the CAT(1) metric; cf.  \cite[2.3.1]{Serre}). 
By Lemma \ref{lm:convexIsleviSphere}, $\Delta(\GG,k)^L$ is a Levi sphere.
This proves that $\Delta(\GG,k)^L \in \D(\Delta(\GG,k))$.
Also, the map sending $L$ to $\Delta(\GG,k)^L$ is clearly order-preserving.
By Equation \eqref{eq:fixedPointLevi}, $L$ can be recovered as a Levi complement of a maximal simplex in $\Delta(\GG,k)^L$, showing that this map is a poset embedding.

Finally, for a Levi sphere $S \in \D(\Delta(\GG,k))$, we see that $S = \Delta(\GG,k)^L$ where $L$ is the $k$-Levi complement of a maximal simplex $P\in S$ for which $L$ contains a maximal split $k$-torus giving rise to an apartment containing $S$.
\end{proof}

Hence, we see that $\D(\Delta(\GG,k))$ is isomorphic with the poset of $k$-Levi subgroups ordered by inclusion.

In view of these observations, we can now describe the posets $\PD$, $\D$, $\OPD$ and $\OD$ purely in terms of $k$-subgroups of $\GG$.
Recall that a set of $k$-parabolic subgroups is a subset of simplices of some apartment of $\Delta(\GG,k)$ (i.e., its elements lie in a common apartment), if and only if these parabolic subgroups share a maximal split $k$-torus.

\begin{definition}
\label{def:DecompPosetsAlgebraicGroups}
For $\GG$ a connected reductive group over a field $k$, we write:
\begin{itemize}
    \item $\Delta(\GG,k) =$ the building of $\GG(k)$, whose face poset is the poset of $k$-parabolic subgroups ordered by reverse inclusion.    
    \item $\D(\GG,k) = $ poset of $k$-Levi subgroups, ordered by inclusion.
    \item $\OD(\GG,k) = \{(P,L) \tq P\in \Delta(\GG,k)$ and $L\in \D(\GG,k)$ is a $k$-Levi complement in $P\}$, with ordering given by $(P,L)\leq (P',L')$ if and only if $P\subseteq P'$ and $L\subseteq L'$.
    \item $\PD(\GG,k) = \X(\Delta(\GG,k)) \cup \D(\GG,k)$, with crossed-term ordering given by $P\prec L$ if $P,L$ share a maximal split $k$-torus.
    \item $\OPD(\GG,k) = \X(\Delta(\GG,k)) \cup \OD(\GG,k)$, with crossed-term ordering given by $Q\prec (P,L)$ if $Q,P,L$ share a maximal split $k$-torus.
\end{itemize}
\end{definition}

The following proposition describes the links in the building $\Delta(\GG,k)$.

\begin{proposition}
\label{prop:Levi_and_subParab}
If $(P,L)\in \OD(\GG,k)$, then we have an $L(k)$-equivariant poset isomorphism
\begin{align*}
\X(\Delta(L,k)) & \longrightarrow \X(\Delta(\GG,k))_{\subset P}\\
Q & \longmapsto QR_u(P),
\end{align*}
with inverse given by $Q\mapsto Q\cap L$.
In particular,
\[ \Lk_{\Delta(\GG,k)}(P) \cong  \X(\Delta(\GG,k))_{\subset P} \cong \Delta(L,k).\]
\end{proposition}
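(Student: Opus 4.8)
The plan is to produce the two maps $\Phi\colon Q_L\mapsto Q_L R_u(P)$ and $\Psi\colon Q\mapsto Q\cap L$ and check that they are mutually inverse, order-preserving, and $L(k)$-equivariant; this is in essence the Levi decomposition of a parabolic subgroup of $\GG$ relative to the Levi subgroup $L$. First I would record that $P=R_u(P)\rtimes L$ with $R_u(P)$ defined over $k$ (the unipotent radical of a $k$-parabolic is defined over $k$, as recalled above), that $L$ --- being a $k$-Levi complement in $P$ by the hypothesis $(P,L)\in\OD(\GG,k)$ --- normalizes $R_u(P)$ and meets it trivially, and that $L$ contains a maximal split $k$-torus $T$ (Proposition \ref{prop:Levi_and_subLevi}(1)).

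The key structural input is the classical correspondence between parabolic subgroups of $\GG$ contained in $P$ and parabolic subgroups of $L$, given by $Q\mapsto Q\cap L$ with inverse $Q_L\mapsto R_u(P)Q_L$; see \cite[Proposition 4.1]{DM} (whose proof is characteristic-free) or \cite[3.14, 4.8]{BT}. Concretely, one uses that a parabolic $Q$ of $\GG$ with $Q\subseteq P$ satisfies $R_u(P)\subseteq R_u(Q)\subseteq Q$, so the modular law gives $Q=R_u(P)\rtimes(Q\cap L)$; the completeness of $L/(Q\cap L)$ --- hence parabolicity of $Q\cap L$ in $L$ --- follows from that of $P/Q$, which is a closed, and therefore complete, subvariety of $\GG/Q$. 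For the $k$-rational refinement I would note that, since all the groups in sight can be arranged to contain the common maximal split $k$-torus $T$, the correspondence restricts to one between $k$-parabolics of $\GG$ contained in $P$ and $k$-parabolics of $L$; in particular $R_u(P)Q_L$ is a $k$-parabolic when $Q_L$ is, and $Q\cap L$ is a $k$-parabolic of $L$ when $Q$ is. Mutual inversion of $\Phi,\Psi$ is then immediate from $R_u(P)\cap L=1$ and $R_u(P)\subseteq Q$; order-preservation in both directions follows from $Q_i=R_u(P)(Q_i\cap L)$; proper parabolics correspond to proper parabolics because $L\subseteq Q\subseteq P$ forces $Q=P$; and $L(k)$-equivariance holds because $L$ normalizes $R_u(P)$, so conjugation by $g\in L(k)$ sends $Q_L R_u(P)$ to $({}^gQ_L)R_u(P)$.

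For the last assertion I would simply translate into simplicial language: in $\Delta(\GG,k)$ the $k$-parabolics are ordered by reverse inclusion, so $\Lk_{\Delta(\GG,k)}(P)$ --- in the convention of Section \ref{sec:preliminaries} --- is exactly $\{Q\in\X(\Delta(\GG,k)) : Q\subseteq P\}=\X(\Delta(\GG,k))_{\subset P}$ with the induced order, and $\Psi$ carries it isomorphically onto $\X(\Delta(L,k))$; this also re-proves that the link of $P$ is the building $\Delta(L,k)$ of dimension $\codim_{\Delta(\GG,k)}(P)$ (cf. \cite[Proposition 4.9]{AB}). I do not expect a genuine obstacle here: the only non-formal ingredients are the containment $R_u(P)\subseteq R_u(Q)$ for $Q\subseteq P$ and the completeness argument for parabolicity of $Q\cap L$, both classical; the remaining work is the careful $k$-rational descent, which is routine once a common maximal split $k$-torus is fixed.
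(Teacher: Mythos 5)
Your argument is correct, and it is essentially the standard proof of the facts the paper simply cites: the paper's own proof of this proposition is a one-line reference to Propositions 14.22 and 21.13 of \cite{Borel}, while you reconstruct the content of those results — the Levi decomposition $P=R_u(P)\rtimes L$, the containment $R_u(P)\subseteq Q$ for a parabolic $Q\subseteq P$, the modular law giving $Q=R_u(P)\rtimes(Q\cap L)$, parabolicity of $Q\cap L$ via completeness of $P/Q\cong L/(Q\cap L)$, and the dictionary between reverse inclusion of parabolics and the link of the simplex $P$. What the citation buys the paper is precisely the one point where your write-up is thinnest: the $k$-rational descent. Over a non-perfect field it is not formal that $Q\cap L$ and $Q_LR_u(P)$ are defined over $k$, and ``all groups contain a common maximal split $k$-torus'' is a heuristic rather than a proof; this is exactly what \cite[21.13]{Borel} and the relevant parts of \cite{BT} (and \cite[Proposition 4.1]{DM}, which the paper itself invokes in Proposition \ref{prop:Levi_and_subLevi}) establish, and since you cite these sources the gap is closed, but if you wanted a self-contained argument you would need to carry out that descent rather than gesture at the common torus. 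Everything else (mutual inversion, order preservation, properness on both sides, $L(k)$-equivariance, and the identification of $\Lk_{\Delta(\GG,k)}(P)$ with $\X(\Delta(\GG,k))_{\subset P}$ under the paper's convention for links) is handled correctly.
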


\begin{proof}
This follows from Propositions 14.22 and 21.13 of \cite{Borel}.
\end{proof}

Next, we study intervals in the posets $\D$, $\PD$, $\OD$ and $\OPD$ in terms of the algebraic description we have for their elements.

\begin{corollary}
\label{coro:intervalsOD}
Let $(P,L)\in \OD(\GG,k)$.
Then
\begin{enumerate}
\item $\OD(\GG,k)_{>(P,L)} \cong_L (\{ Q\in \Delta(\GG,k) \tq Q\supset P\},\subseteq) \cong_L \X(\Delta(\GG,k))_{\supset P}^{\op}$.
\item $\OD(\GG,k)_{< (P,L)} \cong_{L(k)} \OD(L,k)$, and the isomorphism is given by $(Q,M)\mapsto (Q\cap L,M)$ with inverse $(\widetilde{Q},M)\mapsto (\widetilde{Q}R_u(P),M)$.
\end{enumerate}
\end{corollary}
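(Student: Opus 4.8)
Both items are the group-theoretic translation of Lemma~\ref{lm:intervalsBuildingCase} applied to $\Delta=\Delta(\GG,k)$. By Lemma~\ref{lm:oppositionAndParabolics} and the remark following it, sending a pair $(P,L)$ (a $k$-parabolic together with a $k$-Levi complement) to the pair of opposite simplices $(P,Q)$, where $Q$ is the unique $k$-parabolic opposite to $P$ with $P\cap Q=L$, is a $\GG(k)$-equivariant poset isomorphism $\OD(\GG,k)\cong\OD(\Delta(\GG,k))$ carrying the interval at $(P,L)$ to the interval at $(P,Q)$. Since parabolics are self-normalizing, $L(k)$ stabilizes $(P,L)$, so every map below is automatically $L(k)$-equivariant, matching the $\Stab_{\Aut(\Delta)}(\sigma)$-equivariance in Lemma~\ref{lm:intervalsBuildingCase}. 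Thus the whole statement is ``transport of structure'' once the dictionary ($k$-parabolics $=$ simplices, links $=$ buildings of Levi subgroups via Proposition~\ref{prop:Levi_and_subParab}) is in place.

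\textbf{Item (1).} Under the above identification, Lemma~\ref{lm:intervalsBuildingCase}(1) gives $\OD(\Delta)_{>(P,Q)}\cong\{\tau\subsetneq P\tq\tau\neq\emptyset\}^{\op}$ via $(\tau,\tau')\mapsto\tau$. A non-empty proper face $\tau$ of the simplex $P$ is precisely a $k$-parabolic $Q'$ with $P\subsetneq Q'\subsetneq\GG$, and the face-poset order on such $\tau$ is reverse inclusion of parabolics, whose opposite is ordinary inclusion; this yields the displayed chain of isomorphisms. Equivalently and more directly: if $(Q',M')\geq(P,L)$ then $M'$ is the unique $k$-Levi complement in $Q'$ containing a fixed maximal split $k$-torus of $L$ (Proposition~\ref{prop:Levi_and_subLevi}(1)), so $M'$ is determined by $Q'$ and $(Q',M')\mapsto Q'$ is a bijection onto $\{Q'\tq P\subseteq Q'\subsetneq\GG\}$ that is order-preserving for inclusion; a dimension count (distinct Levi complements of the same parabolic are never nested) shows $(Q',M')>(P,L)$ forces $Q'\supsetneq P$, so the image is $\X(\Delta(\GG,k))^{\op}_{\supset P}$.

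\textbf{Item (2) and the main obstacle.} By Proposition~\ref{prop:Levi_and_subParab}, $\Lk_{\Delta(\GG,k)}(P)\cong\Delta(L,k)$ $L(k)$-equivariantly via $R\mapsto R\cap L$ (inverse $\widetilde R\mapsto \widetilde R R_u(P)$), hence $\OD(\Lk_{\Delta(\GG,k)}(P))\cong\OD(\Delta(L,k))\cong\OD(L,k)$. Lemma~\ref{lm:intervalsBuildingCase}(2) gives $\OD(\Delta)_{<(P,Q)}\cong\OD(\Lk_\Delta(P))$ via $(\tau,\tau')\mapsto(\tau,\proj_P(\tau'))$; writing $\tau=Q'$ (a $k$-parabolic $\subseteq P$, necessarily proper below $(P,L)$ by the same nesting argument), $\tau'=R'$ (a $k$-parabolic $\subseteq Q$) and $M=Q'\cap R'$, the first coordinate goes to $Q'\cap L$ and the second to $\proj_P(R')\cap L$. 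Using the standard description of Tits projections in terms of parabolics (\cite[2.30]{Tits1}, \cite{Borel,BT}), $\proj_P(R')=(L\cap R')R_u(P)$, so $\proj_P(R')\cap L=L\cap R'$ and the Levi complement of the resulting opposite pair in $\Delta(L,k)$ is $(Q'\cap L)\cap(L\cap R')=Q'\cap R'=M$; this is exactly $(Q',M)\mapsto(Q'\cap L,M)$, with inverse $(\widetilde Q,M)\mapsto(\widetilde Q R_u(P),M)$ obtained by reversing Proposition~\ref{prop:Levi_and_subParab}. The only genuinely non-formal point is this last reconciliation of $\proj_P$ with the clean algebraic formula. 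Should one prefer to avoid projections, one can instead verify $(Q',M)\mapsto(Q'\cap L,M)$ directly from Propositions~\ref{prop:Levi_and_subParab} and~\ref{prop:Levi_and_subLevi}(3); there the technical heart is checking that $M$, a $k$-Levi complement in $Q'$ with $M\subseteq L$, remains a $k$-Levi complement in the smaller group $Q'\cap L$, which follows from $R_u(Q'\cap L)=R_u(Q')\cap L$ together with the uniqueness in Proposition~\ref{prop:Levi_and_subLevi}(1) applied inside $L$.
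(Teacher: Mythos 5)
Your proof is correct, but your primary route is genuinely different from the paper's. The paper argues directly in group-theoretic terms: item (1) is exactly Proposition \ref{prop:Levi_and_subLevi}(2) (for each $k$-parabolic $Q'\supset P$ there is a unique $k$-Levi complement of $Q'$ containing $L$), and item (2) is proved by constructing the map in the opposite direction, $(\widetilde{Q},M)\mapsto(\widetilde{Q}R_u(P),M)$ from $\OD(L,k)$ to $\OD(\GG,k)_{<(P,L)}$, checking via Proposition \ref{prop:Levi_and_subParab} that $\widetilde{Q}R_u(P)$ is a $k$-parabolic properly contained in $P$, via Proposition \ref{prop:Levi_and_subLevi}(3) that $M$ is a $k$-Levi subgroup of $\GG$, and that $M$ is a complement because $R_u(\widetilde{Q}R_u(P))=R_u(\widetilde{Q})R_u(P)$; the inverse is intersection with $L$. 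Your main argument instead transports Lemma \ref{lm:intervalsBuildingCase} through the dictionary (Lemma \ref{lm:oppositionAndParabolics} and Proposition \ref{prop:Levi_and_subParab}), which is conceptually attractive---it explains why the statement is a corollary of the building-theoretic result---but its cost is the group formula for the Tits projection, $\proj_P(R')=(P\cap R')R_u(P)$, which together with $R'\subseteq Q$ and $P\cap Q=L$ (so $P\cap R'=L\cap R'$) yields your $(L\cap R')R_u(P)$; that formula is true but is not proved in the paper and is only loosely covered by your citations, as you yourself flag. Your fallback argument is essentially the paper's proof run in the intersection direction: your identity $R_u(Q'\cap L)=R_u(Q')\cap L$ is equivalent to the paper's $R_u(\widetilde{Q}R_u(P))=R_u(\widetilde{Q})R_u(P)$, and at that point you should also record, as the paper does, that $M$ is a $k$-Levi subgroup of $\GG$ (Proposition \ref{prop:Levi_and_subLevi}(3)) so that the inverse map really lands in $\OD(\GG,k)_{<(P,L)}$. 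One small correction in item (1): Proposition \ref{prop:Levi_and_subLevi}(1) gives only the uniqueness of $M'$; surjectivity of $(Q',M')\mapsto Q'$ requires the existence of a Levi complement of $Q'$ containing $L$, which is Proposition \ref{prop:Levi_and_subLevi}(2)---precisely the reference the paper invokes for this item.
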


\begin{proof}
The $L$-isomorphisms in item (1) follow from Proposition \ref{prop:Levi_and_subLevi}(2).

Write $U:=R_u(P)$.
We claim that we have an $L(k)$-isomorphism:
\begin{align*}
    \OD(L,k) & \longrightarrow \OD(\GG,k)_{<(P,L)}\\
    (\widetilde{Q},M) & \longmapsto (\widetilde{Q}U, M).
\end{align*}
Let $(\widetilde{Q},M)\in \OD(L,k)$.
Then $\widetilde{Q}U$ is a $k$-parabolic subgroup properly contained in $P$ by Proposition \ref{prop:Levi_and_subParab}, and $M$ is a $k$-Levi subgroup of $\GG$ by Proposition \ref{prop:Levi_and_subLevi}(3).
Then $M$ is a Levi complement for $\widetilde{Q}U$ since $R_u(\widetilde{Q}U) = R_u(\widetilde{Q})U$.
This shows that $(\widetilde{Q}U,M) \in \OD(\GG,k)_{<(P,L)}$.
Moreover, the inverse of this map is given by $(Q,M) \mapsto (Q\cap L,M)$ by Propositions \ref{prop:Levi_and_subLevi}(3) and \ref{prop:Levi_and_subParab}.
This proves item (2).
\end{proof}

Now let us look at the map $F_{\GG,k}:\OPD(\GG,k)\to \PD(\GG,k)$ that projects a tuple $(P,L)\in \OD(\GG,k)$ to the Levi subgroup $L  \in \D(\GG,k)$, and is the identity on the building.
In particular, if $Q$ is the unique opposite of $P$ containing $L$, then the Levi sphere spanned by $P,Q$ is $\Delta(\GG,k)^L$.
Therefore, our identifications show that $F_{\GG,k}$ is indeed the map $F: \OPD(\Delta(\GG,k)) \to \PD(\Delta(\GG,k))$ from Equation \eqref{eq:ForgetfulMap}.

\begin{proposition}
\label{prop:intervalsDecompositions}
Let $n_{\GG,k}$ denote the dimension of $\Delta(\GG,k)$, and let $L\in \D(\GG,k)$ be a $k$-Levi subgroup.
Then the following hold:
\begin{enumerate}
\item $\D(\GG,k)_{\subset L} = \D(L,k)$ and $\D(\GG,k)_{\supseteq L} \cup \{\GG\}$ is a geometric lattice.
\item We have $\Delta(\GG,k)^L = \{Q\in \Delta(\GG,k) \tq Q\supseteq L\} \cong F_{\GG,k}^{-1}(\D(\GG,k)_{\supseteq L})$.
\item $\Delta(\GG,k)^L$ is a sphere of dimension $n_{\GG,k}-n_{L,k}-1$.
\end{enumerate}
\end{proposition}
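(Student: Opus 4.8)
The plan is to deduce all three items by transporting, via the $\GG(k)$-equivariant identifications of Lemmas \ref{lm:oppositionAndParabolics} and \ref{lm:LeviSubgroupsAndLeviSpheres}, the structural facts about Levi spheres and the forgetful map established in Sections \ref{sec:CBBuildings}--\ref{sec:opd}. Throughout I write $S := \Delta(\GG,k)^L$ for the Levi sphere attached to $L$; by Equation \eqref{eq:fixedPointLevi} (parabolics are self-normalizing) one has $S = \{Q\in\Delta(\GG,k)\tq Q\supseteq L\}$, and, as recalled in the proof of Lemma \ref{lm:LeviSubgroupsAndLeviSpheres}, the maximal simplices of $S$ are exactly the $k$-parabolic subgroups of $\GG$ in which $L$ is a $k$-Levi complement. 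This already yields the first equality of item (2).

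For item (1), the identity $\D(\GG,k)_{\subset L} = \D(L,k)$ is immediate from Proposition \ref{prop:Levi_and_subLevi}(3): a subgroup $M\subsetneq L$ is a $k$-Levi subgroup of $L$ if and only if it is one of $\GG$, and the inclusion orders coincide. For the upper interval I would observe that the isomorphism $L'\mapsto \Delta(\GG,k)^{L'}$ of Lemma \ref{lm:LeviSubgroupsAndLeviSpheres} sends $\D(\GG,k)_{\supseteq L}\cup\{\GG\}$ onto $\overline{\D}(\Delta(\GG,k))_{\geq S}$, with $\GG$ corresponding to the empty Levi sphere (the adjoined top element): indeed $L\subseteq L'$ translates to $\Delta(\GG,k)^{L'}\subseteq \Delta(\GG,k)^{L} = S$, i.e.\ to $\Delta(\GG,k)^{L'}\geq S$ in the reverse-inclusion order of $\D(\Delta(\GG,k))$. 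By Proposition \ref{prop:upperIntervalsDecompDelta} the latter poset is an upper interval of a geometric lattice, hence itself a geometric lattice, and transporting back gives item (1). Alternatively one can run the hyperplane-arrangement argument of Section \ref{sec:leviSpheres} inside the apartment cut out by a maximal split $k$-torus of $L$.

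For the second isomorphism of item (2), I would specialize the description of $F^{-1}(\D(\Delta)_{\geq S})$ from the proof of Theorem \ref{thm:DisCM}. Under the above identifications $F_{\GG,k}$ is the forgetful map $F$ of Equation \eqref{eq:ForgetfulMap} and $\D(\GG,k)_{\supseteq L}$ corresponds to $\D(\Delta(\GG,k))_{\geq S}$, so $F_{\GG,k}^{-1}(\D(\GG,k)_{\supseteq L})$ is the set of pairs $(P,Q)$ of opposite $k$-parabolics with $P,Q\supseteq L$, i.e.\ both lying in $S$. Since every simplex $P$ of the Levi sphere $S$ has a unique opposite $\op_S(P)$ in $S$, the assignment $P\mapsto (P,\op_S(P))$ is a bijection $\X(S)\to F_{\GG,k}^{-1}(\D(\GG,k)_{\supseteq L})$, and because $\op_S$ is a simplicial automorphism it reverses the order, so $\X(S)^{\op}\cong F_{\GG,k}^{-1}(\D(\GG,k)_{\supseteq L})$ as posets (in particular their geometric realizations are homeomorphic). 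Combined with $S=\Delta(\GG,k)^L$ this gives item (2).

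For item (3) I would use that $S$, being a Levi sphere by Lemma \ref{lm:LeviSubgroupsAndLeviSpheres}, triangulates a sphere of dimension $\dim S$ by Lemma \ref{lm:convexIsleviSphere}; it then remains to compute $\dim S$. Choosing a maximal simplex $P$ of $S$ --- a $k$-parabolic in which $L$ is a $k$-Levi complement --- Proposition \ref{prop:Levi_and_subParab} gives $\Lk_{\Delta(\GG,k)}(P)\cong \Delta(L,k)$, hence $\dim\Lk_{\Delta(\GG,k)}(P)=n_{L,k}$; since $\Delta(\GG,k)$ is a pure building, $\codim_{\Delta(\GG,k)}(P)=1+\dim\Lk_{\Delta(\GG,k)}(P)=n_{L,k}+1$, so $\dim P = n_{\GG,k}-n_{L,k}-1$, and as $S$ is pure with all maximal simplices of this dimension, $\dim S = n_{\GG,k}-n_{L,k}-1$. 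The only delicate points are bookkeeping: matching the reverse-inclusion order on $\D(\Delta(\GG,k))$ with inclusion of $k$-Levi subgroups, and remembering that the top element $\GG$ one adjoins to $\D(\GG,k)$ corresponds to the empty Levi sphere one must adjoin to $\D(\Delta(\GG,k))$ to obtain a lattice; I do not expect a genuine obstacle beyond this.
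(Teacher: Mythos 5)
Your proof is correct and follows essentially the same route as the paper: item (1) via Propositions \ref{prop:Levi_and_subLevi} and \ref{prop:upperIntervalsDecompDelta}, the first equality in (2) via self-normalization of parabolics, the second via the fiber description of $F$ from the proof of Theorem \ref{thm:DisCM} (Equation \eqref{eq:fiberForgetfulMap}) together with the identifications in Lemmas \ref{lm:oppositionAndParabolics} and \ref{lm:LeviSubgroupsAndLeviSpheres}, and item (3) via Lemmas \ref{lm:convexIsleviSphere} and \ref{lm:LeviSubgroupsAndLeviSpheres}. You simply spell out more details than the paper does, e.g.\ the dimension count through $\Lk_{\Delta(\GG,k)}(P)\cong\Delta(L,k)$, which is a valid filling-in of the paper's terse citation.
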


\begin{proof}
Item (1) follows from Propositions \ref{prop:Levi_and_subLevi} and \ref{prop:upperIntervalsDecompDelta}.

In item (2), the first equality follows from the fact that parabolic subgroups are self-normalizing.
We show next the isomorphism with $F_{\GG,k}^{-1}(\D(\GG,k)_{\supseteq L})$ in item (2).

By Lemma \ref{lm:LeviSubgroupsAndLeviSpheres}, $T := \Delta(\GG,k)^L$ is the Levi sphere defined by any pair of opposite $k$-parabolic subgroups with Levi complement $L$.
On the other hand, by Equations \eqref{eq:ForgetfulMap} and  \eqref{eq:fiberForgetfulMap} with $\sigma=\emptyset$ there and $T := \Delta(\GG,k)^L$, and the isomorphisms $\OD(\GG,k)\cong \OD(\Delta(\GG,k))$ (Lemma \ref{lm:oppositionAndParabolics}) and $\D(\GG,k)\cong \D(\Delta(\GG,k))$ (Lemma \ref{lm:LeviSubgroupsAndLeviSpheres}), we see that
\[ F_{\GG,k}^{-1}(\D(\GG,k)_{\supseteq L}) \cong_L \Lk_{T}(\sigma) =  \Delta(\GG,k)^L.\]

Item (3) again follows from Lemmas \ref{lm:convexIsleviSphere} and \ref{lm:LeviSubgroupsAndLeviSpheres}.
\end{proof}

\begin{corollary}
The poset $\D(\GG,k)$ is Cohen-Macaulay of dimension $n_{\GG,k}$, and $F_{\GG,k}:\OD(\GG,k)\to \D(\GG,k)$ induces a homotopy equivalence:
\begin{equation}
\label{eq:equivOppD}
\OD(\GG,k) \simeq \D(\GG,k) \vee \bigvee_{L\in \D(\GG,k)} \Delta(\GG,k)^L * \D(L,k).
\end{equation}
Moreover, if $\GG(k)$ is finite, then in homology we get an isomorphism of $\GG(k)$-modules:
\begin{align*}
\redH_{n_{\GG,k}}(\OD(\GG,k),R) & =  \redH_{n_{\GG,k}} (\D(\GG,k),R) \, \oplus\\
&\bigoplus_{\overline{L} \in \D(\GG,k) / \GG(k) } \Ind_{N_{\GG}(L)(k)}^{\GG(k)}\left( \ \redH_{n_{\GG,k}-n_{L,k}-1}(\Delta(\GG,k)^L,R) \otimes \redH_{n_{L,k}}(\D(L,k),R) \ \right),
\end{align*}
where $\overline{L}$ denotes the $\GG(k)$-orbit of $L\in \D(\GG,k)$.
Here, $R$ is a ring such that $R\GG(k)$ is semisimple.
\end{corollary}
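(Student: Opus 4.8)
The plan is to read off both the homotopy type and the module structure from the building results of Section~\ref{sec:leviSpheres} through the group--building dictionary established above, arguing by induction on $n_{\GG,k}=\dim\Delta(\GG,k)$. First I would record that Lemma~\ref{lm:LeviSubgroupsAndLeviSpheres} identifies $\D(\GG,k)$ with $\D(\Delta(\GG,k))$ and Lemma~\ref{lm:oppositionAndParabolics} identifies $\OD(\GG,k)$ with $\OD(\Delta(\GG,k))$, both $\GG(k)$-equivariantly, and that under these identifications $F_{\GG,k}$ becomes the forgetful map $F$ of Equation~\eqref{eq:ForgetfulMap}: a pair $(P,Q)$ of opposite $k$-parabolics with $P\cap Q=L$ spans exactly the Levi sphere $\Delta(\GG,k)^L$. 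Hence Theorem~\ref{thm:DisCM} immediately gives that $\D(\GG,k)$ is Cohen--Macaulay of dimension $n_{\GG,k}$.

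Next I would transport the wedge decomposition~\eqref{eq:wedgeODandD} across this dictionary. For $L\in\D(\GG,k)$ put $S=\Delta(\GG,k)^L$; by Proposition~\ref{prop:intervalsDecompositions}(3) this is a sphere of dimension $n_{\GG,k}-n_{L,k}-1$, so $|S|$ is the geometric realisation of $\Delta(\GG,k)^L$. Choosing as $\sigma_S$ a maximal simplex $P$ of $S$, i.e.\ a $k$-parabolic with Levi complement $L$, Proposition~\ref{prop:Levi_and_subParab} gives $\Lk_{\Delta(\GG,k)}(P)\cong\Delta(L,k)$, and Lemma~\ref{lm:LeviSubgroupsAndLeviSpheres} applied to the connected reductive $k$-split group $L$ gives $\D(\Lk_{\Delta(\GG,k)}(P))\cong\D(\Delta(L,k))\cong\D(L,k)$. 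Substituting these identifications into~\eqref{eq:wedgeODandD} yields~\eqref{eq:equivOppD}.

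For the homology statement I would repeat the argument of the proof of Theorem~\ref{thm:DisCM} verbatim, but invoking Theorem~\ref{thm:homologyWedge} in place of Theorem~\ref{thm:wedgeDecomposition}: the hypotheses are exactly those verified there --- the conditions of Theorem~\ref{thm:wedgeDecomposition} hold for $F_{\GG,k}$ on the relevant opposite posets, using that $F_{\GG,k}^{-1}(\D(\GG,k)_{\supseteq L})\cong\Delta(\GG,k)^L$ by Proposition~\ref{prop:intervalsDecompositions}(2) and that the inclusion of the subfibre over the $k$-Levi subgroups strictly containing $L$ is null-homotopic for dimension reasons (its order complex has dimension below the connectivity of the sphere $\Delta(\GG,k)^L$) --- together with $\GG(k)$ finite and $R\GG(k)$ semisimple. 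Theorem~\ref{thm:homologyWedge} then yields, for every $m$,
\[ \widetilde{H}_{m}(\OD(\GG,k),R)\cong_{\GG(k)}\widetilde{H}_m(\D(\GG,k),R)\oplus\bigoplus_{\overline{L}\in\D(\GG,k)/\GG(k)}\ \bigoplus_{i+j=m-1}\Ind_{\Stab_{\GG(k)}(L)}^{\GG(k)}\big(\widetilde{H}_i(\Delta(\GG,k)^L,R)\otimes\widetilde{H}_j(\D(L,k),R)\big), \]
and three identifications finish the proof: $\Stab_{\GG(k)}(L)=N_{\GG(k)}(L)=N_{\GG}(L)(k)$ since $L$ is a $k$-subgroup; $\widetilde{H}_i(\Delta(\GG,k)^L,R)$ is concentrated in degree $i=n_{\GG,k}-n_{L,k}-1$ because $\Delta(\GG,k)^L$ is a sphere of that dimension; and by the induction hypothesis applied to $L$ (connected reductive, $k$-split, of building dimension $n_{L,k}$), $\widetilde{H}_j(\D(L,k),R)$ is concentrated in degree $j=n_{L,k}$. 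Hence only $m=n_{\GG,k}$ contributes, and one obtains the displayed $\GG(k)$-module decomposition.

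The only genuine care needed is the bookkeeping of order conventions --- $\D(\GG,k)$ is ordered by inclusion while $\D(\Delta)$ carries the reverse-inclusion order, and Theorems~\ref{thm:wedgeDecomposition}/\ref{thm:homologyWedge} are applied (as in the proof of Theorem~\ref{thm:DisCM}) to the opposite posets --- so that the fibre $F_{\GG,k}^{-1}(\D(\GG,k)_{\supseteq L})$ and the open interval of $k$-Levi subgroups strictly contained in $L$ are the objects that actually appear. Once this is pinned down, every remaining step is a direct citation, so I do not expect a serious obstacle here.
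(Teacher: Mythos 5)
Your proposal is correct and takes essentially the same route as the paper: the Cohen--Macaulayness and the wedge decomposition are Theorem \ref{thm:DisCM} transported through the dictionary of Lemmas \ref{lm:oppositionAndParabolics} and \ref{lm:LeviSubgroupsAndLeviSpheres}, and the homology statement is Theorem \ref{thm:homologyWedge} combined with the fiber and interval descriptions of Proposition \ref{prop:intervalsDecompositions} together with $\Stab_{\GG(k)}(L)=N_{\GG}(L)(k)$. The only (cosmetic) difference is that you frame the concentration of $\widetilde{H}_*(\D(L,k))$ in degree $n_{L,k}$ as an induction on $n_{\GG,k}$, whereas it follows directly from Theorem \ref{thm:DisCM} applied to the building $\Delta(L,k)$, so no induction is needed.
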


\begin{proof}
The first part follows from Theorem \ref{thm:DisCM}.
The homology isomorphism is an application of Theorem \ref{thm:homologyWedge} and the description of the fibers and intervals from Proposition \ref{prop:intervalsDecompositions}.
For a $k$-Levi subgroup $L\in \D(\GG,k)$, note that $\Stab_{\GG(k)}(L) = N_{\GG}(L)(k)$.
\end{proof}

Now, we provide a group-theoretic description of the lower intervals in the partial decomposition posets $\PD$ and $\OPD$, and the connecting map between them.

\begin{proposition}
\label{prop:lowLeviIntervalOPD}
Let $(P_L,L)\in \OPD(\GG,k)$.
Then $\OPD(\GG,k)_{\prec (P_L,L)} \simeq \Delta(\GG,k)^L * \OPD(L,k)$. 
Moreover, in homology, we have an isomorphism of $L(k)$-modules
\begin{equation}
\label{eq:isoHomologyIntervalsOPDLevi}
\widetilde{H}_*\big(\OPD(\GG,k)_{\prec (P_L,L)},R \big) \cong \widetilde{H}_*\big(\, \Delta(\GG,k)^L *\OPD(L,k), R \, \big) \cong \widetilde{H}_*\big(\, \OPD(L,k), R \, \big),
\end{equation}
provided that $RL(k)$ is semisimple.
\end{proposition}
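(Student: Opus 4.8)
The plan is to transport Equation \eqref{eq:intervalOPD} of Theorem \ref{thm:OPDtoPD} through the group/building dictionary of this section and then to deduce the homology statement from the fact that $L(k)$ acts trivially on the Levi sphere $\Delta(\GG,k)^L$.

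\emph{Homotopy type.} Under the isomorphism $\OPD(\GG,k)\cong\OPD(\Delta(\GG,k))$ furnished by Lemmas \ref{lm:oppositionAndParabolics} and \ref{lm:LeviSubgroupsAndLeviSpheres}, the element $(P_L,L)$ corresponds to the opposite pair $(P_L,Q)$, where $Q$ is the unique $k$-parabolic opposite to $P_L$ with $P_L\cap Q=L$, and $\Conv_{\Delta(\GG,k)}(P_L,Q)=\Delta(\GG,k)^L$. I would first check that this isomorphism carries $\OPD(\GG,k)_{\prec(P_L,L)}$ onto $\OPD(\Delta(\GG,k))_{\prec(P_L,Q)}$: a set of $k$-parabolics together with $L$ shares a maximal split $k$-torus exactly when these parabolics and $Q$ lie in a common apartment, since $L\subseteq Q$ and $L$ contains a maximal split $k$-torus, which is precisely what determines such an apartment. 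Equation \eqref{eq:intervalOPD}, applied with $\sigma=P_L$ and $S=\Delta(\GG,k)^L$, then gives
\[
\OPD(\GG,k)_{\prec(P_L,L)} \simeq \Delta(\GG,k)^L * \OPD\bigl(\Lk_{\Delta(\GG,k)}(P_L)\bigr).
\]
Finally, the $L(k)$-equivariant building isomorphism $\Lk_{\Delta(\GG,k)}(P_L)\cong\Delta(L,k)$ of Proposition \ref{prop:Levi_and_subParab} carries opposite simplices to opposite simplices and apartments to apartments, hence induces $\OPD(\Lk_{\Delta(\GG,k)}(P_L))\cong\OPD(\Delta(L,k))\cong\OPD(L,k)$. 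This proves the homotopy equivalence.

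\emph{Homology.} Since $L(k)\le L\le P_L$ and parabolic subgroups are self-normalizing, $L(k)$ fixes $(P_L,L)$ and so acts on $\OPD(\GG,k)_{\prec(P_L,L)}$; and because every parabolic in $\Delta(\GG,k)^L$ contains $L\supseteq L(k)$, the action of $L(k)$ on $\Delta(\GG,k)^L$ is trivial. Note that $RL(k)$ semisimple forces $L(k)$ finite. Granting that the equivalence above is $L(k)$-equivariant — which I would establish either by tracking the $L(k)$-action through the proof of Equation \eqref{eq:intervalOPD} or, as a fallback, by applying the equivariant wedge decomposition Theorem \ref{thm:homologyWedge} to the $L(k)$-equivariant restriction of $F_{\GG,k}$ to $\OPD(\GG,k)_{\prec(P_L,L)}\to\PD(\GG,k)_{\prec L}$, whose upper-fiber null-homotopies from the proof of Theorem \ref{thm:OPDtoPD} adjoin or delete a single $L(k)$-fixed element and are therefore equivariant — the first isomorphism in the statement follows. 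For the second, $\Delta(\GG,k)^L$ is a sphere of dimension $d:=n_{\GG,k}-n_{L,k}-1$ by Proposition \ref{prop:intervalsDecompositions}(3), carrying a trivial $L(k)$-action, so the Künneth theorem for joins — with no Tor terms since $RL(k)$ is semisimple — yields $\widetilde H_n(\Delta(\GG,k)^L * \OPD(L,k),R)\cong_{L(k)}\widetilde H_{n-d-1}(\OPD(L,k),R)$, i.e. the homology of $\OPD(L,k)$ up to the standard degree shift.

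The step I expect to be most delicate is the $L(k)$-equivariance of Equation \eqref{eq:intervalOPD}, which in Section \ref{sec:opd} is recorded only non-equivariantly. The honest route is to rerun the fiber argument behind Theorem \ref{thm:OPDtoPD} with the group $L(k)$ throughout, checking that each contraction and null-homotopy — and each appeal to Theorems \ref{thm:wedgeDecomposition}, \ref{thm:homologyWedge} and \ref{thm:PDandCB} and to Lemma \ref{lem:lowerIntervalsDecompDelta} — respects the $L(k)$-action; the semisimplicity hypothesis enters exactly to license Theorem \ref{thm:homologyWedge} and to split the Künneth sequence of the join.
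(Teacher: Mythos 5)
Your first half is fine, and it is a genuinely different route from the paper: you obtain the homotopy equivalence by transporting Equation \eqref{eq:intervalOPD} of Theorem \ref{thm:OPDtoPD} through the dictionary of Lemmas \ref{lm:oppositionAndParabolics}, \ref{lm:LeviSubgroupsAndLeviSpheres} and Proposition \ref{prop:Levi_and_subParab}, and the check that $\prec(P_L,L)$ corresponds to $\prec(P_L,Q)$ (using $L=P_L\cap Q$ and the torus--apartment correspondence) is correct. The paper does not argue this way: it builds an ``extended intersection map'' $I_L:\OPD(\GG,k)_{\prec(P_L,L)}\to\{L\}\ojoin\OPD(L,k)$, sending $Q\mapsto Q\cap L$ and $(Q,M)\mapsto(Q\cap L,M)$, and analyzes its fibers directly in group-theoretic terms: over $P\in\Delta(L,k)$ the fiber is $\Delta(\GG,k)^P$, contractible by Serre since $R_u(P)\neq 1$; over the minimum $L$ it is the sphere $\Delta(\GG,k)^L$ with trivial $L(k)$-action; over $(Q,M)\in\OD(L,k)$ it has a unique maximal element $(QR_u(P_L),M)$. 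Then Theorems \ref{thm:wedgeDecomposition} and \ref{thm:homologyWedge} give both the homotopy equivalence and, at once, the $L(k)$-module isomorphism, since only the fiber over the fixed minimal element $L$ contributes.

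The gap in your proposal is precisely the part that the paper's construction handles automatically: the $L(k)$-equivariance of \eqref{eq:isoHomologyIntervalsOPDLevi}. Your route (a) is a promissory note (``rerun the fiber argument equivariantly''), and note that the paper's derivation of Equation \eqref{eq:intervalOPD} goes through the wedge decomposition \eqref{eq:OPDwedgePD}, Theorem \ref{thm:PDandCB} and Lemma \ref{lem:lowerIntervalsDecompDelta}, so an honest equivariant rerun is a nontrivial amount of work rather than a remark. Your fallback (b) does not land on the stated isomorphism: applying Theorem \ref{thm:homologyWedge} to $F_{\GG,k}$ restricted to $\OPD(\GG,k)_{\prec(P_L,L)}\to\PD(\GG,k)_{\prec L}$ yields the equivariant analogue of \eqref{eq:OPDwedgePD}, namely $\widetilde H_*(\PD(\GG,k)_{\prec L})$ plus a sum over orbits of Levi subgroups $M\subsetneq L$ of induced terms built from the sphere fibers $\Lk_T(P_L)$ (which are \emph{not} contractible, so your description of the fiber null-homotopies as ``adjoining or deleting a single fixed element'' is inaccurate for the $\D$-part, although Theorem \ref{thm:homologyWedge} does not actually require equivariant null-homotopies). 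Converting that direct sum into $\widetilde H_{*-d-1}(\OPD(L,k))$ would further require the equivariant identification of $\widetilde H_*(\PD(\GG,k)_{\prec M})$ --- i.e.\ Proposition \ref{prop:lowLeviIntervalPD}, which is proved afterwards by a separate direct argument --- together with a matching of stabilizers, inductions and degree shifts. The clean fix is to replace both routes by the paper's map $I_L$ (or to discover it yourself): its fiber analysis is elementary, visibly $L(k)$-equivariant, and feeds directly into Theorems \ref{thm:wedgeDecomposition} and \ref{thm:homologyWedge}. Your Künneth argument for the second isomorphism in \eqref{eq:isoHomologyIntervalsOPDLevi}, using triviality of the $L(k)$-action on the sphere $\Delta(\GG,k)^L$, agrees with the paper.
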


\begin{proof}
Let $X_L := \OPD(\GG,k)_{\prec (P_L,L)}$, and $Y_L := \{L\} \ojoin \OPD(L,k)$, where $L$ is a unique minimal element of $Y_L$.
By Corollary \ref{coro:intervalsOD}, we have an $L(k)$-isomorphism of posets $\OD(\GG,k)_{<(P_L,L)} \cong_{L(k)} \OD(L,k)$ given by intersecting,
\[(Q,M)\in \OD(\GG,k)_{<(P_L,L)}\mapsto (Q\cap L, M)\in \OD(L,k).\]
Consider then the ``extended intersection map":
\begin{align*}
I_L : X_L & \longrightarrow Y_L\\
      (Q,M) \in \OD(\GG,k)_{<(P_L,L)} & \mapsto (Q\cap L,M)\in \OD(L,k)\\
      Q \in \X(\Delta(\GG,k))_{\prec (P_L,L)} & \mapsto Q\cap L \in \{L\} \cup \X(\Delta(L,k)).
\end{align*}
Now $Q\cap L\in \{L\}\cup \X(\Delta(L,k))$ by \cite[21.13(i)]{Borel}.
Note that the image of $I_L$ contains the unique minimal element $L$ of the poset $Y_L$.

We study the lower fibers of $I_L$.
If $P\in \Delta(L,k)$, and $Q\in \Delta(\GG,k)$ contains $P$, then $Q$ and $L$ share a maximal split $k$-torus as $P$ contains a maximal split $k$-torus and $P\subset L$.
Therefore,
\[ I_L^{-1}( (Y_L)_{\leq P} ) = \{ Q \in \Delta(\GG,k) \tq Q\cap L \supseteq P\} = \{ Q\in \Delta(\GG,k) \tq Q\supseteq P\} = \Delta(\GG,k)^P.\]
Since $P\neq L$, $R_u(P)\neq 1$, we see that $\Delta(\GG,k)^P$ is contractible by Proposition 4.1 of \cite{Serre}.
As $P$ acts trivially on this complex, the contraction is clearly $P$-equivariant (note that $N_L(P) = P$ as $P$ is a parabolic subgroup of $L$).

On the other hand,
\[ I_L^{-1}( (Y_L)_{\leq L} ) = I_L^{-1}(L) = \{ Q \in \Delta(\GG,k) \tq Q\supseteq L\} = \Delta(\GG,k)^L,\]
which is a sphere of dimension $n_{\GG,k}-n_{L,k}-1$ by Proposition \ref{prop:intervalsDecompositions}.  Moreover, $L$ acts trivially on $\Delta(G,k)^L$.

Finally, for $(Q,M)\in \OD(L,k)$, $I_L^{-1}( (Y_L)_{\leq (Q,M)} )$ has a unique maximal element $(Q_L,M)$, where $Q_L = QR_u(P_L)$ is the unique parabolic subgroup contained in $P_L$ and such that $Q_L\cap L = Q$ (see Corollary \ref{coro:intervalsOD}).
Hence, this fiber is $M$-contractible, where $M = N_L(M) \cap N_L(Q)$.

By Theorems \ref{thm:wedgeDecomposition} and \ref{thm:homologyWedge}, we conclude that $I_L$ induces a homotopy equivalence between $X_L$ and $\Delta(\GG,k)^L * \OPD(L)$, which gives the first $L(k)$-isomorphism in homology as stated in Equation \eqref{eq:isoHomologyIntervalsOPDLevi}.
The second isomorphism follows since $L(k)$ acts trivially on $\Delta(G,k)^L$.
\end{proof}

We obtain an analogue result for $\PD$:

\begin{proposition}
\label{prop:lowLeviIntervalPD}
Let $L\in \D(\GG,k)$.
Then $\PD(\GG,k)_{\prec L} \simeq \Delta(\GG,k)^L * \PD(L,k)$. 
Moreover, in homology, we have an isomorphism of $L(k)$-modules
\begin{equation}
\label{eq:isoHomologyIntervalsPDLevi}
\widetilde{H}_*\big(\PD(\GG,k)_{\prec L},R \big) \cong \widetilde{H}_*\big(\, \Delta(\GG,k)^L *\PD(L,k), R \, \big) \cong \widetilde{H}_*\big(\,\PD(L,k), R \, \big),
\end{equation}
provided that $RL(k)$ is semisimple.
\end{proposition}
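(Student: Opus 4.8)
The plan is to mimic the proof of Proposition \ref{prop:lowLeviIntervalOPD}, replacing the ordered decomposition poset by the unordered one; the argument is in fact slightly simpler, because a proper $k$-Levi subgroup $M$ of $L$ is already an element of $\D(\GG,k)$, so no ``inflation'' of a parabolic component is required. Set $X_L := \PD(\GG,k)_{\prec L}$ and $Y_L := \{L\}\ojoin \PD(L,k)$, so that $L$ is the unique minimal element of $Y_L$; recall from Proposition \ref{prop:intervalsDecompositions}(1) that $\D(\GG,k)_{<L} = \D(L,k)$. First I would introduce the ``extended intersection map'' $I_L : X_L \to Y_L$ defined by $I_L(M) = M$ for $M \in \D(\GG,k)_{<L} = \D(L,k)$ and $I_L(Q) = Q\cap L$ for a $k$-parabolic $Q$ of $\GG$ with $Q\prec L$; here $Q\cap L \in \{L\}\cup \X(\Delta(L,k))$ by \cite[21.13(i)]{Borel}, precisely because $Q$ and $L$ share a maximal split $k$-torus. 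Checking that $I_L$ is order-preserving and $L(k)$-equivariant is routine; the one point requiring care is the equivalence ``$Q\prec M$ in $\PD(\GG,k)$'' $\Leftrightarrow$ ``$Q\cap L\prec M$ in $\PD(L,k)$'' for $M\in\D(L,k)$, which follows from the fact that $M$, like $L$, contains a maximal split $k$-torus of $\GG$, so a torus witnessing one relation witnesses the other.

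Next I would compute the lower fibers of $I_L$, following Proposition \ref{prop:lowLeviIntervalOPD} step by step. For a proper $k$-parabolic $P$ of $L$ one gets $I_L^{-1}((Y_L)_{\leq P}) = \{Q\in\Delta(\GG,k)\tq Q\supseteq P\} = \Delta(\GG,k)^P$, which is contractible by \cite[Proposition 4.1]{Serre} since $R_u(P)\neq 1$. For $M\in\D(L,k)$ one gets $I_L^{-1}((Y_L)_{\leq M}) = \PD(\GG,k)_{\preceq M}$, which is contractible since $M$ is its unique maximal element, the contraction being $\Stab_{L(k)}(M)$-equivariant. Finally, over the minimal element $L$ the fiber is $I_L^{-1}(L) = \{Q\in\Delta(\GG,k)\tq Q\supseteq L\} = \Delta(\GG,k)^L$, a sphere of dimension $n_{\GG,k}-n_{L,k}-1$ by Proposition \ref{prop:intervalsDecompositions}(3), on which $L(k)$ acts trivially (each $Q\supseteq L$ is normalized by $L$, parabolics being self-normalizing). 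In particular, for every $y\in Y_L$ the inclusion $I_L^{-1}((Y_L)_{<y})\hookrightarrow I_L^{-1}((Y_L)_{\leq y})$ is null-homotopic: when $y\neq L$ the codomain is contractible, and for $y = L$ this is the null-homotopic inclusion of the empty set into the nonempty fiber $\Delta(\GG,k)^L$.

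With these inputs I would apply Theorem \ref{thm:wedgeDecomposition} to $I_L$. The poset $Y_L$ is a cone with apex $L$, hence contractible, and every wedge summand indexed by $y\neq L$ involves a contractible fiber and thus collapses; only the summand indexed by $y = L$ survives, contributing $I_L^{-1}(L) * (Y_L)_{>L} = \Delta(\GG,k)^L * \PD(L,k)$. This gives the homotopy equivalence $\PD(\GG,k)_{\prec L}\simeq \Delta(\GG,k)^L * \PD(L,k)$. For the $L(k)$-module statement I would feed the same data into Theorem \ref{thm:homologyWedge}: all homology contributions vanish except the one indexed by the orbit $\overline{L}$, where $\Stab_{L(k)}(L) = L(k)$ and $\widetilde{H}_*(\Delta(\GG,k)^L)$ is $R$, concentrated in degree $n_{\GG,k}-n_{L,k}-1$ with trivial $L(k)$-action. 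This yields $\widetilde{H}_*(\PD(\GG,k)_{\prec L},R)\cong_{L(k)}\widetilde{H}_*(\Delta(\GG,k)^L * \PD(L,k),R)\cong_{L(k)}\widetilde{H}_*(\PD(L,k),R)$, the last isomorphism being a degree shift by $n_{\GG,k}-n_{L,k}$ since $L(k)$ acts trivially on the sphere $\Delta(\GG,k)^L$. As in the ordered case, the only genuinely delicate part is the bookkeeping of the crossed order relations when identifying the lower fibers; once that is settled the proof is purely formal.
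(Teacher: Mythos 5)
Your proposal is correct and follows essentially the same route as the paper: the paper's proof also uses the extended intersection map onto $\{L\}\ojoin\PD(L,k)$ (identity on $\D(\GG,k)_{<L}=\D(L,k)$, $Q\mapsto Q\cap L$ on parabolics), computes the same three fibers ($\Delta(\GG,k)^P$ contractible, the fiber over $M\in\D(L,k)$ contractible via its unique maximal element $M$, and $\Delta(\GG,k)^L$ over the minimum), and concludes with Theorems \ref{thm:wedgeDecomposition} and \ref{thm:homologyWedge} plus the trivial $L(k)$-action on $\Delta(\GG,k)^L$. The extra details you supply (the equivalence of crossed relations and the identification of the fiber over $M$ with $\PD(\GG,k)_{\preceq M}$) are consistent with what the paper leaves implicit.
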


\begin{proof}
We proceed as we did the proof of Proposition \ref{prop:lowLeviIntervalOPD}, and consider the intersection map on the $\Delta(\GG,k)$-part.

Let $W_L = \PD(\GG,k)_{<L}$, and $Z_L = \{L\}\ojoin \PD(L,k)$.
Since $\D(\GG,k)_{\prec L} = \D(L,k)$ by Proposition \ref{prop:intervalsDecompositions}, the intersection map $J_L:W_L\to Z_L$ is the identity on the elements of $\D(\GG,k)_{<L}$, and intersects with $L$ those parabolics $Q\in \Delta(\GG,k)$ which share a maximal split $k$-torus with $L$.
The same argument as in the previous proof shows that for $P\in \Delta(L,k)$, one has that
\[ J_L^{-1}( (Z_L)_{\leq P} ) = I_L^{-1}( (Y_L)_{\leq P} ) = \Delta(\GG,k)^P \simeq_{P} * .\]

On the other hand, the preimage $J_L^{-1}(L)$ is again the sphere $\Delta(\GG,k)^L$ of dimension $n_{\GG,k}-n_{L,k}-1$, with trivial action of $L$.

Finally, the preimage $J_L^{-1}( (Z_L)_{\preceq M} )$, where $M\in \D(L,k)$, is $N_{L}(M)$-contractible since it has a unique maximal element $M$.

As before, invoking Theorems \ref{thm:wedgeDecomposition} and \ref{thm:homologyWedge} we conclude the proof.
As in the proof of Proposition \ref{prop:lowLeviIntervalOPD}, the second isomorphism in Equation \eqref{eq:isoHomologyIntervalsPDLevi} follows since $L(k)$ acts trivially on $\Delta(G,k)^L$.
\end{proof}

To finish this section, we provide a long exact sequence relating the squares of the Steinberg modules of the $k$-Levi subgroups.
In what follows, $\St(\GG(k))$ denotes the homology of $\Delta(\GG,k)$.

\begin{proposition}
Let $\GG$ be a connected reductive group defined over $k$.
Then we have an isomorphism $\widetilde{H}_*(\OPD(\GG,k)) \cong_{\GG(k)} \St(\GG(k))^{\otimes 2}$.
Moreover, we have a long exact sequence of $\GG(k)$-modules

\begin{equation}\label{eq:LESSteinbergSquare}~
\begin{tikzcd}[row sep=3ex, column sep=3ex]
0 \arrow{d}{} & & & \\
 \St(\GG(k))^{\otimes 2} \arrow{d}{} &    & &  \\
\displaystyle{\bigoplus_{\overline{(P,L)} \in O^{(m)}}} \Ind_{L(k)}^{\GG(k)}\big( \St(L(k))^{\otimes 2} \big) \arrow{r}{} & \cdots \arrow{r}{} & \displaystyle{\bigoplus_{\overline{(P,L)} \in O^{(0)}}} \Ind_{L(k)}^{\GG(k)}\big( \St(L(k))^{\otimes 2} \big) \ar[r] & \St(\GG(k)) \arrow[d]{} \\[-2ex]
 & & & 0
\end{tikzcd} \end{equation}\\
where $m = \dim \Delta(\GG,k)$, $O^{(j)}$ is the set of orbits by the $\GG(k)$-action on the set of pairs $(P,L)\in \OD(\GG,k)$
with $\dim P = m-j$ (regarding $P$ as a simplex), for $0\leq j\leq m$, and $\overline{(P,L)}$ denotes the $\GG(k)$-orbit of $(P,L)$.

In particular, if $G(k)$ is finite, $\St(G(k))$ is the alternating sum
\[(-1)^{m}\St(G(k))^{\otimes 2} - \sum_{k=0}^{m}(-1)^k \sum_{\overline{(P,L)} \in O^{(k)}}  \Ind_{L(k)}^{\GG(k)}\big( \St(L(k))^{\otimes 2} \big)\]
in the Grothendieck ring over $\ZZ$.
\end{proposition}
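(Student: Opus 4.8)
The plan is to prove the three assertions in order; all the real work lies in the exactness of \eqref{eq:LESSteinbergSquare}, after which the Grothendieck‑ring statement is a formality. For the identification $\widetilde H_*(\OPD(\GG,k))\cong_{\GG(k)}\St(\GG(k))^{\otimes 2}$, Lemmas \ref{lm:oppositionAndParabolics} and \ref{lm:LeviSubgroupsAndLeviSpheres} give $\GG(k)$‑equivariant isomorphisms $\OD(\GG,k)\cong\OD(\Delta(\GG,k))$ and $\D(\GG,k)\cong\D(\Delta(\GG,k))$, and since a finite family of $k$‑parabolic subgroups shares a maximal split $k$‑torus exactly when the corresponding simplices and Levi spheres lie in a common apartment, the crossed relations agree and $\OPD(\GG,k)\cong_{\GG(k)}\OPD(\Delta(\GG,k))$. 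Theorem \ref{thm:OPDandDeltaDelta}(2) then gives $\widetilde H_*(\OPD(\GG,k))\cong_{\GG(k)}\widetilde H_*(\Delta(\GG,k)*\Delta(\GG,k))$; by Solomon--Tits $\widetilde H_*(\Delta(\GG,k);\ZZ)$ is free and concentrated in degree $m=\dim\Delta(\GG,k)$, where it is $\St(\GG(k))$, so the equivariant Künneth formula for joins gives $\widetilde H_*(\OPD(\GG,k))\cong_{\GG(k)}\St(\GG(k))\otimes_\ZZ\St(\GG(k))$, concentrated in degree $2m+1$. This holds for every connected reductive $k$‑split group, in particular for every $k$‑Levi subgroup $L$ of $\GG$, whose building is $\Delta(L,k)\cong\Lk_{\Delta(\GG,k)}(P)$ (Proposition \ref{prop:Levi_and_subParab}).

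To produce \eqref{eq:LESSteinbergSquare} I would filter $\OPD(\GG,k)$ by its height‑truncated subposets $\OPD(\GG,k)^{(\ell)}$. A longest‑chain count shows that a simplex of $\Delta(\GG,k)$ has height $\le m$ and that $(P,L)\in\OD(\GG,k)$ has height $2m-\dim P+1$ (with $P$ viewed as a simplex), so $\OPD(\GG,k)^{(m)}=\X(\Delta(\GG,k))$ and, for $0\le i\le m$, the elements of height exactly $m+i+1$ form precisely $\{(P,L):\dim P=m-i\}$: a $\GG(k)$‑invariant antichain of \emph{maximal} elements of $\OPD(\GG,k)^{(m+i+1)}$, with orbit set $O^{(i)}$ and $\Stab_{\GG(k)}(P,L)=L(k)$ (since $P$ is self‑normalizing and $R_u(P)(k)$ acts regularly on the Levi complements of $P$), while $\OPD(\GG,k)^{(m+i+1)}\simeq\Delta(\GG,k)*\Delta(\GG,k)^{(i)}$ by Theorem \ref{thm:OPDandDeltaDelta}(1). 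Adjoining a maximal antichain $A$ to a poset replaces its order complex by the union with the cones $x*\K(P_{<x})$ ($x\in A$), glued along $\K(P_{<x})$, so $\widetilde H_*(\OPD(\GG,k)^{(m+i+1)},\OPD(\GG,k)^{(m+i)})\cong_{\GG(k)}\bigoplus_{\overline{(P,L)}\in O^{(i)}}\Ind_{L(k)}^{\GG(k)}\widetilde H_{*-1}(\OPD(\GG,k)_{\prec(P,L)})$. By Proposition \ref{prop:lowLeviIntervalOPD}, by the first assertion applied to $L$, and by triviality of the $L(k)$‑action on the sphere factor $\Delta(\GG,k)^{L}$, the interval $\OPD(\GG,k)_{\prec(P,L)}$ has free homology equal to $\St(L(k))^{\otimes 2}$ in a single degree, so this relative homology is concentrated in degree $m+i+1$ and equals $\bigoplus_{\overline{(P,L)}\in O^{(i)}}\Ind_{L(k)}^{\GG(k)}(\St(L(k))^{\otimes 2})$.

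Now every $\OPD(\GG,k)^{(m+i+1)}$, and $\X(\Delta(\GG,k))$ itself, has free integral homology concentrated in a single degree — this is where Solomon--Tits, Baclawski's rank‑selection theorem for the Cohen--Macaulay building, and Künneth are used — so the long exact homology sequences of the consecutive pairs degenerate into short exact sequences $0\to\widetilde H_{m+i+1}(\OPD(\GG,k)^{(m+i+1)})\to\bigoplus_{O^{(i)}}\Ind_{L(k)}^{\GG(k)}(\St(L(k))^{\otimes 2})\to\widetilde H_{m+i}(\OPD(\GG,k)^{(m+i)})\to 0$, which splice together (equivalently, the spectral sequence of the filtration degenerates at $E^2$ and its $E^1$‑page is the asserted augmented complex) into \eqref{eq:LESSteinbergSquare}; the extreme terms $\widetilde H_{2m+1}(\OPD(\GG,k))=\St(\GG(k))^{\otimes 2}$ and $\widetilde H_m(\X(\Delta(\GG,k)))=\St(\GG(k))$ are furnished by the first assertion and Solomon--Tits, and all maps are $\GG(k)$‑equivariant. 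Finally, when $\GG(k)$ is finite, \eqref{eq:LESSteinbergSquare} is an exact sequence of finitely generated $\ZZ\GG(k)$‑modules, so in the Grothendieck ring over $\ZZ$ the alternating sum of the classes of its terms vanishes (each short exact sequence $0\to A\to B\to C\to0$ contributing $[B]=[A]+[C]$); solving this relation for $[\St(\GG(k))]$ and inserting the first assertion to identify $[\St(\GG(k))^{\otimes 2}]$ yields the displayed identity.

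The main obstacle is the bookkeeping inside this filtration: one must pin down that the $(m+i+1)$‑st stratum is exactly $O^{(i)}$ with stabilizers $L(k)$, and — crucially for the integral conclusion, since $\ZZ\GG(k)$ is not semisimple and Theorem \ref{thm:homologyWedge} is unavailable over $\ZZ$ — one must verify that every homology group occurring is torsion‑free and concentrated in one degree, so that the long exact sequences genuinely collapse to short exact ones over $\ZZ$. Once that is secured, the first assertion is immediate and the last is the routine Euler‑characteristic computation.
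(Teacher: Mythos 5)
Your proposal is correct and follows essentially the same route as the paper: the first assertion via Theorem \ref{thm:OPDandDeltaDelta}(2), then the filtration of $\OPD(\GG,k)$ by height-truncated subposets, identification of the stratum in height $m+i+1$ with the pairs $(P,L)$ of $\dim P = m-i$ having stabilizer $L(k)$, the relative-homology computation through the spherical lower intervals $\OPD(\GG,k)_{\prec(P,L)}\simeq \Delta(\GG,k)^L * \OPD(L,k)$ of Proposition \ref{prop:lowLeviIntervalPD}'s companion result (Proposition \ref{prop:lowLeviIntervalOPD}), splicing the resulting short exact sequences, and taking alternating sums for the Grothendieck-ring identity. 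The integral/equivariance subtlety you flag at the end is treated in the paper exactly as you suggest, namely by combining that proposition with the triviality of the $L(k)$-action on the sphere $\Delta(\GG,k)^L$ and the concentration of all the homology groups involved in a single degree.
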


\begin{proof}
Let $X := \OPD(\GG,k)$.
Then $\widetilde{H}_*(X)$ carries the Steinberg square representation by Theorem \ref{thm:OPDandDeltaDelta}(2).

Also, by Theorem \ref{thm:OPDandDeltaDelta}, $X^{(i)} = \OPD(\GG,k)^{(i)}$ is spherical of dimension $i$ for all $0\leq i \leq 2\dim X + 1$.
In particular, by Quillen's fiber theorem \ref{thm:quillen}, the inclusion $X^{(i)} \hookrightarrow X^{(i+1)}$ is an $i$-equivalence.
Now suppose that $m = \dim \Delta(\GG,k)$.
Then $X^{(m)} = \Delta(\GG,k)$, and $\widetilde{H}_*(X) = \St(\GG(k))$.

On the other hand, for any $i\geq 0$, we have a short exact sequence in (integral) homology:
\begin{equation}
\label{eq:SECskeletons}
0\to \widetilde{H}_{i+1}(X^{(i+1)}) \to \widetilde{H}_{i+1}(X^{(i+1)}, X^{(i)}) \to \widetilde{H}_i(X^{(i)}) \to 0.
\end{equation}
Since $X^{(i+1)}$ is obtained from $X^{(i)}$ by adding elements $x$ of height $i$ through their spherical lower-links, namely $X_{<x}$, we see that
\begin{equation}
\label{eq:relativeHomologySkeletons}
\widetilde{H}_{i+1}(X^{(i+1)}, X^{(i)}) \cong_{\GG(k)} \bigoplus_{x \in X^{(i+1)} \setminus X^{(i)}} \widetilde{H}_i(X_{<x}).
\end{equation}
Hence, Equations \eqref{eq:SECskeletons} and \eqref{eq:relativeHomologySkeletons} give rise to a short exact sequence of $\GG(k)$-modules
\begin{equation*}
\label{eq:SECskeletons2}
0\to \widetilde{H}_{i+1}(X^{(i+1)}) \overset{\partial_i}{\longrightarrow}  \bigoplus_{x \in X^{(i+1)} \setminus X^{(i)}} \widetilde{H}_i(X_{<x}) \overset{\iota_i}{\longrightarrow} \widetilde{H}_i(X^{(i)}) \to 0,
\end{equation*}
where $j_i$ is the map induced by the inclusions $X_{<x}\hookrightarrow X^{(i)}$ for $x\in X$ of height $i+1$ (see Theorems 2.4 and 2.5 of \cite{SegevWebb} for more details on the connecting homomorphism $\partial_i$).

Now, suppose that $x\in X$ is an element of the form $x=(P,L)\in \OD(\GG,k)$.
Then the height of $x$ is $2m-\dim P+1$, and $\dim X_{<x} = 2m - \dim P$, where $\dim P$ denotes the dimension of $P$ as a simplex of the building.
Also,
\begin{equation*}
\label{eq:homologyInterval}
\widetilde{H}_*(X_{<x}) \cong_{L(k)} \widetilde{H}_*\left(\Delta(\GG,k)^L * \OPD(L,k) \right) \cong_{L(k)} \St(L(k))^{\otimes 2}.
\end{equation*}
Here we have used that $L$ acts trivially on the sphere $\Delta(\GG,k)^L$.

Note that $\Stab_{\GG(k)}(x) = P\cap N_{\GG}(L)(k) = L(k)$.
Hence, for $m + 1\leq i\leq 2m+1$, we see that
\begin{equation}
\label{eq:relativeHomologySkeletons2}
\bigoplus_{x \in X^{(i+1)} \setminus X^{(i)}} \widetilde{H}_i(X_{<x}) \cong_{\GG(k)} \bigoplus_{\overline{(P,L)}} \Ind_{L(k)}^{\GG(k)}\big( \St(L(k))^{\otimes 2} \big),
\end{equation}
where the second sum runs over the orbits of pairs $(P,L)\in \OD(\GG,k)$ such that $P$, as a simplex, has dimension $2m-i$.
For $0\leq j \leq m$, let $O^{(j)}:= (X^{(j+m+1)} \setminus X^{(j+m)}) / \GG(k)$.
Then, putting all together, and using that $X^{(2m+1)} = X$ with homology given by $\St(\GG(k))^2$,
we get the long exact sequence of $\GG(k)$-modules as in Equation \eqref{eq:LESSteinbergSquare}.
The connecting maps are, from left to right, $\alpha_j = \partial_{j+m}\circ \widetilde{\iota}_{j+m+1}$, $j=m,\ldots,-1$.
Here $\widetilde{\iota}_i$ is the map induced by the inclusion $\iota_i$ under the identification from Equation \eqref{eq:relativeHomologySkeletons2}.

The final claim of the theorem follows from taking alternating sums in Equation \eqref{eq:LESSteinbergSquare} when $G(k)$ is finite (so the modules are finitely generated).
\end{proof}

\bibliographystyle{plainurl} 
\bibliography{refs.bib}

\end{document}